\documentclass[a4paper,11pt,final]{amsart}

\usepackage{graphicx}
\usepackage[super]{cite}
\usepackage{amssymb,amsmath}
\usepackage{mathrsfs}
\usepackage{latexsym}
\usepackage{prettyref}
\usepackage[colorlinks=true,linkcolor=black,citecolor=black]{hyperref}
\usepackage[T1]{fontenc}
\usepackage{dsfont}

\usepackage[a4paper,total={17cm,23cm}]{geometry}


\pagestyle{plain}

\numberwithin{equation}{section}

\makeatletter
\def\mkmacro#1#2{\expandafter\xdef\csname dyn#1\endcsname{#2}}
\def\mylabel#1{\mkmacro{#1}{\@currenvir} \label{\@currenvir:#1}}
\def\myref#1{\prettyref{\csname dyn#1\endcsname :#1}}
\makeatother


\newrefformat{list}{(\hyperref[#1]{\ref{#1}})}

\newcounter{thmlist}
\renewcommand{\thethmlist}{\roman{thmlist}}

\newenvironment{thmlist}{%
\begin{list}{\textup{(\thethmlist)}}{\usecounter{thmlist}\setlength{\leftmargin}{\labelsep}}}%
{\end{list}}

\allowdisplaybreaks[2]


\newcommand{\N}{\ensuremath{\mathds{N}}} 


\newcommand{\R}{\ensuremath{\mathds{R}}} 

\newcommand{\C}{\ensuremath{\mathds{C}}} 


\newcommand\cA{\mathcal{A}}

\newcommand\cF{\mathcal{F}}

\newcommand\cH{\mathcal{H}}

\newcommand{\cU}{\mathcal{U}}


\newcommand{\frF}{\ensuremath{\mathfrak{F}}}
\newcommand{\frG}{\ensuremath{\mathfrak{G}}}



\newcommand{\ip}[2]{\ensuremath{\langle {#1},{#2} \rangle}} 
\newcommand{\ipbig}[2]{\ensuremath{\bigl\langle {#1},{#2} \bigr\rangle}} 

\newcommand{\abs}[1]{\ensuremath{\lvert {#1} \rvert}} 
\newcommand{\absbig}[1]{\ensuremath{\bigl\lvert {#1} \bigr\rvert}} 

\newcommand{\norm}[2]{\ensuremath{\lVert {#1} \vert {#2} \rVert}} 
\newcommand{\normw}[1]{\ensuremath{\lVert {#1} \rVert}} 

\newcommand{\integral}[4][]{\ensuremath{\int\limits^{{#1}}_{{#2}} {#3}\,\mathrm{d}{#4}}} 

\DeclareMathOperator{\sgn}{sign} 
\DeclareMathOperator*{\esssup}{ess\,sup} 
\DeclareMathOperator{\supp}{supp} 
\DeclareMathOperator*{\cartesian}{\times}


\theoremstyle{plain}
\newtheorem{theorem}{Theorem}[section] 
\newrefformat{theorem}{Theorem~\hyperref[#1]{\ref{#1}}}
\newtheorem{lemma}[theorem]{Lemma} 
\newrefformat{lemma}{Lemma~\hyperref[#1]{\ref{#1}}}
\newtheorem{proposition}[theorem]{Proposition} 
\newrefformat{proposition}{Proposition~\hyperref[#1]{\ref{#1}}}
\newrefformat{corollary}{Corollary~\hyperref[#1]{\ref{#1}}}

\theoremstyle{definition}
\newtheorem{definition}{Definition}[section] 
\newrefformat{definition}{Definition~\hyperref[#1]{\ref{#1}}}
\newrefformat{conjecture}{Conjecture~\hyperref[#1]{\ref{#1}}}
\newtheorem{example}{Example}[section] 
\newrefformat{example}{Example~\hyperref[#1]{\ref{#1}}}

\theoremstyle{remark}
\newtheorem{remark}{Remark} 
\newrefformat{remark}{Remark~\hyperref[#1]{\ref{#1}}}


\newcommand{\measureX}[2]{\ensuremath{\integral{\R^d}{\!\integral{\R^{d-1}}{{#1}}{s}}{t} + \integral{\R^d}{\!\integral{\R^{d-1}}{\!\int\limits^{1}_{-1} {#2}\,\frac{\mathrm{d}a}{\abs{a}^{d+1}}}{s}}{t}}} 
\newcommand{\leb}[1]{\ensuremath{\,\mathrm{d}{#1}}}
\newcommand{\condYb}{\ensuremath{\boldsymbol{(Y)}}}

\title{Inhomogeneous Shearlet Coorbit Spaces}

\author{Fabian Feise, Lukas Sawatzki}

\date{\today}

\keywords{coorbit spaces, shearlets, (Banach) frames, smoothness spaces, time-frequency analysis}

\subjclass[2010]{46E15, 42C40}

\begin{document}

\maketitle
\begin{abstract}
In this paper we establish inhomogeneous coorbit spaces related to the continuous shearlet transform and the weighted Lebesgue spaces $L_{p,v}, p\geq 1,$ for certain weights $v$.
We present an inhomogeneous shearlet frame for $L_2(\R^d)$ which gives rise to a reproducing kernel $R_\frF$ that is not contained in the space $\cA_{1,m_v}$.
To show that the inhomogeneous shearlet coorbit spaces are Banach spaces we introduce a generalization of the approach of Fornasier, Rauhut and Ullrich.
\end{abstract}



\section{Introduction}\label{sec:introduction}

When analyzing a given signal, the decomposition of the signal into a certain set of building blocks is crucial.
Which kinds of building blocks to choose depends on the information that one wants to extract from the signal.
Very popular kinds of building blocks are wavelets, especially when dealing with signals with isolated singularities.
Because of its isotropic nature, the wavelet transform cannot efficiently deal with anisotropic features, therefore several extensions of this framework were proposed, among those the shearlet transform.
While the wavelets consist only of dilated and translated copies of a mother function, the shearlets are also sheared in each scale, thereby changing the orientation of the functions.
This makes them especially well suited to deal with localized directional features in a signal.
Indeed, it was shown in~Ref.~\cite{ShearletsWavefrontSet,GrohsWavefrontSet} that the shearlet transform can be used to resolve the wavefront set of a signal and in~Ref.~\cite{ShearletsOptimallySparse} that the approximation of cartoon-like images with shearlets is optimally sparse.

Another main advantage of shearlets, which sets them apart from other such frameworks like the ridgelets~\cite{Ridgelets}, curvelets~\cite{Curvelets} or contourlets~\cite{Contourlets} for example, is, that the continuous shearlet transform, introduced and investigated in~Ref.~\cite{ShearletCoorbits,Dahlke:ShearletBanachFrames,Dahlke:ShearletArbitrary,Shearlets}, stems from the action of a square-integrable representation of a topological group, the so-called full shearlet group~$\mathds{S}$.
This property makes it possible to use the abstract coorbit theory, developed by Feichtinger and Gr\"ochenig in~Ref.~\cite{FG:Coorbit1,FG:Coorbit2,FG:Coorbit3}, to define smoothness spaces related to the shearlet transform by measuring the decay of the voice transform.
Shearlet coorbit spaces were investigated by Dahlke et al in a series of papers.~\cite{Dahlke:TracesEmbeddings,ShearletCoorbits,Dahlke:ShearletBanachFrames,Dahlke:ShearletArbitrary,Dahlke:CompactlySupp}
Since the shearlets being used to construct these spaces need to have vanishing moments, any polynomial part in a signal is ignored by the transform because for a polynomial $g$ one has $\mathcal{SH}(f+g)(x) = \ip{f+g}{\psi_x} = \ip{f}{\psi_x} = \mathcal{SH}f(x)$.
This leads to the resulting shearlet coorbit spaces being homogeneous spaces.
However, in practice the smoothness spaces being used, for example to analyze the regularity of the solution space of an operator equation, are usually inhomogeneous.
Therefore, inhomogeneous smoothness spaces related to the shearlet transform are also of interest.
In this paper we introduce non-homogeneous shearlet coorbit spaces by using a generalization of the coorbit theory developed by Fornasier, Rauhut, Ullrich et al.~\cite{FR:InhCoorbit,KempkaSchaeferUllrich:Coorbits,RauhutUllrich}
Their approach uses a more general parameter space for the transform, resulting in more design flexibility.
Instead of the parameter space being a locally compact topological group, it is only assumed to be a locally compact topological Hausdorff space, thereby allowing the construction of inhomogeneous coorbit spaces.
Moreover it is needed for the reproducing kernel $R_\frF$ to be integrabel, which poses difficulties in some applications.
For that reason we present a generalization of their approach in the sense that we only need $R_\frF$ to be integrabel for parameters $q>1$.

\subsection{Outline}
After giving a short overview of the main definitions and results of this generalized coorbit theory in~\prettyref{sec:coorbit_theory}, we use this approach in~\prettyref{sec:shearletcoorbit} to define a new shearlet transform given by a continuous frame $\frF = \{\psi_x\}_{x\in X}$ through the action \[ \mathcal{SH}_\frF f (x) = \ip{f}{\psi_x},\quad x \in X, \] where the frame is indexed by a topological Hausdorff space $X$ (without group structure).
We prove that an integrability condition for (integration) parameters $q>1$ on the kernel function \[ R_\frF: X\times X \rightarrow \C,\, (x,y) \mapsto \ip{\psi_y}{\psi_x} \] holds so that the coorbit spaces \[ \mathcal{SC}_{\frF,\tau,p}^{r} = \{ f\, \vert\, \mathcal{SH}_\frF f \in L_{p,v_{r}}(X) \},\quad p\geq 1, v_{r,n}\text{ weight function on }X, \] classifying distributions by the decay of their transform, are well-defined Banach spaces. As it turns out these spaces coincide for different $\tau$.
Furthermore we restrict ourselves to the case of odd dimensions.
This is due to the fact that otherwise our specific construction of the frame is not well-defined.

We also note that there are other approaches, not based on coorbit space theory, to develop inhomogeneous shearlet smoothness spaces.
In~Ref.~\cite{Labate:ShearletSmoothness} Labate, Mantovani and Negi used the notion of decomposition spaces to define shearlet smoothness spaces, while in~Ref.~\cite{Vera1,Vera2} Vera applied the framework of the $\varphi$-transform, introduced by Frazier and Jawerth, for this purpose.

\subsection{Notation}
We finish this section by stating a few notational conventions.
Throughout this paper $d \in \N$ with $d \geq 2$ is the space dimension.
We usually treat elements $x\in \R^d$ as $x = (x_1,\tilde{x})$ with $\tilde{x} = (x_2,\ldots,x_d) \in \R^{d-1}$.
For two elements $x,y\in \R^d$ we use the canonical inner product \[ x\cdot y = \sum_{i=1}^d x_i y_i. \]
The convention $\R^*$ is used for the set $\R\setminus\{0\}$, $\R_+$ will denote the set of all positive real numbers and $\R_{\geq 0}$ the set of all non-negative real numbers.

For a measure space $(X, \Sigma,\mu)$ with a weight function $v: X \rightarrow (0,\infty)$ we denote the usual (weighted) Lebesgue spaces by $L_{p,v}(X,\mu)$ or just by $L_{p,v}$, if the respective measure space is clear from the context, while $L_1^{\text{loc}}(X,\mu)$ is used for the space of locally integrable functions on $X$.
The norm for the weighted Lebesgue spaces is hereby given through $\normw{f}_{L_{p,v}} = \normw{f\cdot v}_{L_p}$.
For the unweighted Lebesgue spaces with $v \equiv 1$ we write $L_p(X,\mu)$ and $L_p$.
We use the Hilbert space $L_2(\R^d)$ of complex-valued, square-integrable functions on $\R^d$ with the inner product \[ \ip{f}{g}_{L_2(\R^d)} = \int_{\R^d} f(x) \overline{g(x)}\leb{x}. \]
For two functions $f,g\in L_2(\R^d)$ the convolution product $f\ast g$ is defined as \[ (f\ast g)(x) = \int_{\R^d} f(y)g(x-y)\leb{y}. \]
We write $\mathscr{C}^k, k \in \N_0$ for the space of functions $f:\R^d \rightarrow \C$, for which all (classical) partial derivatives $\partial^{\alpha}f$ for $\alpha \in \N_0^d, \abs{\alpha} \leq k$ exist and are continuous.
We also use $\mathscr{C}^\infty_0$ for the space of infinitely differentiable functions on $\R^d$ with compact support and $\mathscr{S}$ denotes the spaces of Schwartz-functions on $\R^d$. We will use the letter $q$ to refer to the kernel spaces $\cA_q$ and $\tau,\sigma$ to refer to the integrability parameters of the spaces of test functions $\cH_\tau$.
We denote with $p'=\frac{p}{p-1}$ the H\"older-dual of $p\geq 1$.

Concerning the Fourier transform of a function $f\in L_1(\R^d)$ we write $\hat{f} = \cF(f)$ using the convention \[ \cF(f)(\omega) := \int_{\R^d} f(x) e^{-2\pi i \omega\cdot x}\leb{x},\qquad \omega \in \R^d, \] with the same symbol being used for the extension to functions $f\in L_2(\R^d)$.

Given a measure space $(X,\Sigma,\mu)$ we say that a Banach space $Y$ of locally integrable, complex-valued functions on $X$ satisfies Condition \condYb, if it is solid, i.e. if from $f\in L_1^{\text{loc}}(X,\mu), g \in Y$ with $\abs{f}\leq \abs{g}$ almost everywhere it follows that $f\in Y$ with $\norm{f}{Y} \leq \norm{g}{Y}$.
Lastly, for quantities $a$ and $b$ we write $a \lesssim b$ if there exists a finite constant $C > 0$ so that $a \leq C\cdot b$, with the constant being independent of the relevant parameters.

\section{Generalized coorbit theory}\label{sec:coorbit_theory}

In this section we give a short overview of the generalized coorbit theory. We follow~Ref.~\cite{FR:InhCoorbit,RauhutUllrich} in our exposition. For our setting we introduce a generalization of their approach with respect to an additional integrability parameter.

To generalize the classical coorbit theory---which assumes a locally compact group as the underlying parameter space of the respective transform---the generalization of Fornasier and Rauhut allows for the parameter space to be of a more general nature.
In this case the parameter space $X$ is only assumed to be a locally compact Hausdorff space equipped with a positive Radon measure $\mu$.
In the following $\cH$ denotes a separable Hilbert space (the signal space), which is usually $L_2$, and $v$ is a weight function on $X$ while $Y$ is a Banach space of equivalence classes of almost everywhere equal, complex-valued functions on $X$.
We start with a set of functions $\frF = \{ \psi_x\}_{x\in X} \subset \cH$, which is indexed by the parameter space, and constitutes a tight continuous frame.
I.e., the map $X\rightarrow \C, x\mapsto \ip{f}{\psi_x}$ is measurable for each $f\in \cH$ and there exists a finite constant $A > 0$ such that
\begin{equation}\label{eq:frame}
A \norm{f}{\cH}^2 = \int_X \abs{\ip{f}{\psi_x}}^2\,\mathrm{d}\mu(x)\text{ for all }f\in \cH.
\end{equation}
Based on $\frF$, a signal transform on the space $\cH$ is introduced in the following way.
\begin{definition}
Let $\frF = \{ \psi_x\}_{x\in X} \subset \cH$ be a tight continuous frame.
Then the associated \emph{voice transform} is defined as the mapping \[ V_\frF : \cH \rightarrow L_2(X,\mu),\quad f \mapsto V_\frF f \] with \[ V_\frF f: X \rightarrow \C,\quad x \mapsto \ip{f}{\psi_x}. \]
\end{definition}
The above transform is well defined due to~\prettyref{eq:frame}.

\subsection{Kernel spaces}

In order for the resulting smoothness spaces to be well defined, conditions on the voice transform $V_\frF$ and therefore conditions on $\frF$ are needed.
In this approach the kernel function
\begin{equation}\label{eq:kernel}
R_\frF: X\times X \rightarrow \C, (x,y)\mapsto R_\frF (x,y) := V_\frF\psi_y(x) = \ip{\psi_y}{\psi_x},
\end{equation}
the \emph{reproducing kernel}, is used.
To formulate certain conditions on this kernel function the following spaces, classifying kernel functions in terms of integrability, are used.
For $1\leq q\leq\infty$ let \[ \cA_q := \Bigl\{ K: X \times X \rightarrow \C: K\text{ is measurable}, \norm{K}{\cA_q} < \infty \Bigr\} \] with
\begin{align*}
\norm{K}{\cA_q} := \max \bigg\{ &\esssup_{x\in X} \left(\int_X \abs{K(x,y)}^q\,\mathrm{d}\mu(y)\right)^{1/q},\\
& \esssup_{y\in X} \left(\int_X \abs{K(x,y)}^q\,\mathrm{d}\mu(x)\right)^{1/q}\bigg\}
\end{align*}
and the usual adaptation for $q=\infty$.
Through a weight function $v\geq 1$ on $X$ a kernel weight function is defined via \[ m_v: X \times X \rightarrow (0,\infty), (x,y) \mapsto \max\biggl\{\frac{v(x)}{v(y)},\frac{v(y)}{v(x)}\biggr\}. \]
Now the associated weighted kernel space $\cA_{q,{m_v}}$ is given by \[ \cA_{q,{m_v}} := \Bigl\{ K: X \times X \rightarrow \C: K\cdot m_v \in \cA_q\Bigr\} \] where \[ \norm{K}{\cA_{q,{m_v}}} := \norm{K\cdot m_v}{\cA_q}. \]
In the following, depending on the context, $K$ will also denote the kernel operator induced by the kernel function acting on a function $F$ through \[ K(F)(x) := \int_X K(x,y) F(y)\,\mathrm{d}\mu(y)\text{ for } x \in X. \]
This way a reproducing identity is established through the action of $R_\frF$, namely $R_\frF(V_\frF f)=V_\frF f$ for all $f\in\cH$.
The following auxiliary Lemma for kernel operators underlines the importance of the kernel spaces $\cA_{q,{m_v}}$.

\begin{lemma}\label{lemma:kernel_property}
Let $K$ be a kernel with $K\in\mathcal{A}_{q,m_v}$ for all $q>1$. Then we have the continuous embeddings
\begin{align*}
K(L_{p,v}(X,\mu)) \hookrightarrow L_{r,v}(X,\mu)
\end{align*}
for all $1< p < r \leq\infty$.
\end{lemma}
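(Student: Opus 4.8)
The plan is to prove the mapping property $K(L_{p,v}) \hookrightarrow L_{r,v}$ by reducing everything to the unweighted statement. The weight enters only through the kernel weight $m_v$, so the first step is to absorb it. Observe that for $f \in L_{p,v}$, writing $g = f\cdot v$ gives $g \in L_p$, and we want to control $\normw{K(f)\cdot v}_{L_r}$. Pointwise,
\begin{align*}
\abs{K(f)(x)}\,v(x) &= \left\lvert \int_X K(x,y)\,f(y)\,v(x)\,\mathrm{d}\mu(y)\right\rvert\\
&\leq \int_X \abs{K(x,y)}\,m_v(x,y)\,\abs{f(y)}\,v(y)\,\mathrm{d}\mu(y),
\end{align*}
using $v(x) \leq m_v(x,y)\,v(y)$, which holds since $m_v(x,y) \geq v(x)/v(y)$. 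Hence if I set $\tilde K := \abs{K}\cdot m_v$, then $\abs{K(f)(x)}\,v(x) \leq \tilde K(\abs{g})(x)$ with $g = f\cdot v \in L_p$, and $\tilde K \in \cA_q$ for all $q>1$ by the hypothesis $K \in \cA_{q,m_v}$. So it suffices to prove the \emph{unweighted} claim: if $\tilde K \in \cA_q$ for all $q>1$, then $\tilde K(L_p) \hookrightarrow L_r$ for $1<p<r\leq\infty$.

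For the unweighted estimate the natural tool is a Schur-type / Young-type interpolation argument. Given $1<p<r\leq\infty$, I would choose $q$ to match the exponents via the Hausdorff--Young--Young convolution heuristic: the kernel lies in an $L_q$-type space in each variable, and one wants $\tilde K(g) \in L_r$ for $g \in L_p$. The relation to aim for is $1 + \frac1r = \frac1q + \frac1p$, equivalently $\frac1q = 1 - \frac1p + \frac1r = \frac{1}{p'} + \frac1r$; since $p < r$ we have $\frac{1}{p'}+\frac1r < \frac{1}{p'}+\frac1p = 1$, so this $q$ satisfies $q > 1$, which is exactly where the weakened hypothesis (integrability only for $q>1$, not $q=1$) is used. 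With this $q$ fixed, I would apply a generalized Young / Schur inequality for integral operators: estimate $\abs{\tilde K(g)(x)}$ by splitting $\abs{\tilde K(x,y)}\,\abs{g(y)}$ via Hölder with three exponents, distributing powers of $\abs{\tilde K}$ and $\abs{g}$ so that one factor integrates against the $\esssup_x \int \abs{\tilde K(x,y)}^q\,\mathrm{d}\mu(y)$ bound and the symmetric factor against $\esssup_y \int \abs{\tilde K(x,y)}^q\,\mathrm{d}\mu(x)$, both of which are finite and equal to $\normw{\tilde K}_{\cA_q}^q$.

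Concretely, for the case $r<\infty$ I would write, with the split exponents $\theta = \frac{q}{r}$ for the outer integration and the matching dual powers on $g$, an estimate of the schematic form
\[
\normw{\tilde K(g)}_{L_r} \leq \normw{\tilde K}_{\cA_q}\,\normw{g}_{L_p},
\]
obtained by integrating the pointwise Hölder bound and invoking Minkowski's integral inequality; the boundary case $r=\infty$ follows directly from Hölder with exponents $q$ and $q'=p$. The \textbf{main obstacle} I anticipate is bookkeeping the three Hölder exponents so that the powers of $\abs{\tilde K}$ genuinely collapse into the two $\cA_q$-norms (one per variable) while the powers of $\abs{g}$ reassemble into $\normw{g}_{L_p}$; this is the standard but delicate part of the Schur test for off-diagonal exponents, and verifying that the chosen $q=q(p,r)$ lies strictly above $1$ throughout the admissible range $1<p<r\leq\infty$ is what makes the argument go through under the relaxed integrability assumption. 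Undoing the substitution $g = f\cdot v$ then yields $\normw{K(f)}_{L_{r,v}} \lesssim \normw{f}_{L_{p,v}}$, which is the asserted continuous embedding.
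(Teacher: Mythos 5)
Your proposal is correct, and it arrives at the same underlying phenomenon the paper exploits --- a three-factor H\"older/Young splitting in which the kernel only ever appears raised to an integrability exponent strictly greater than $1$ --- but the execution differs in two ways worth noting. First, you work directly on the inner integral $\int_X \abs{\tilde K(x,y)}\abs{g(y)}\,\mathrm{d}\mu(y)$ with a three-exponent H\"older inequality (exponents $r$, $p'$, and $(1/p-1/r)^{-1}$), which produces the multiplicative bound $\normw{\tilde K(g)}_{L_r}\leq\normw{\tilde K}_{\cA_q}\normw{g}_{L_p}$ with the single canonical exponent $1/q=1/p'+1/r$; the strict inequality $p<r$ is exactly what forces $q>1$, matching the paper's Remark that $p=r$ would collapse to Schur's test with $q=1$. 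The paper instead dualizes against $h\in L_{r',\frac1v}$ and applies the \emph{additive} three-way Young inequality for numbers under the double integral, which avoids tracking the precise H\"older exponents on the integrals but requires the normalization $\normw{g}_{L_{p,v}}\leq 1$ and ends up invoking two nearby kernel exponents $a\alpha=1+r\varepsilon$ and $c\beta=p'(1/r'-\varepsilon)$ rather than one. Your weight-absorption step ($v(x)\leq m_v(x,y)v(y)$, so $\abs{K(f)}v\leq\tilde K(\abs{fv})$ with $\tilde K=\abs{K}m_v$) is valid and is implicitly what the paper does as well, and your treatment of $r=\infty$ via plain H\"older with exponents $p'$ and $p$ coincides with the paper's. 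The only slip is the passing mention of Minkowski's integral inequality: the step that collapses the powers of $\abs{\tilde K}$ into the two $\cA_q$-suprema is Tonelli/Fubini applied to $\normw{\tilde K(g)}_{L_r}^r$, not Minkowski; this does not affect the validity of the argument you outline.
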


\begin{proof}
For fixed $1< p < r <\infty$ and $g\in L_{p,v}(X,\mu)$ with $\norm{g}{L_{p,v}}\leq 1$ arbitrary one has
\begin{align*}
\norm{K(g)}{L_{r,v}} &= \sup_{\substack{h\in L_{r',\frac{1}{v}} \\ \norm{h}{L_{r',\frac{1}{v}}}\leq 1}} |\ip{K(g)}{h}| \\
&\leq \sup_{\substack{h\in L_{r',\frac{1}{v}} \\ \norm{h}{L_{r',\frac{1}{v}}}\leq 1}} \int_X \int_X|K(x,y)g(y)h(x)|\,\mathrm{d}\mu(x)\,\mathrm{d}\mu(y) \\
&=: \sup_{\substack{h\in L_{r',\frac{1}{v}} \\ \norm{h}{L_{r',\frac{1}{v}}}\leq 1}} I_{K,p,r},
\end{align*}
where $r'$ denotes the H\"older-dual of $r$ satisfying $1/r+1/r'=1$. For some $0<\varepsilon<1/p-1/r$ we set $\alpha:=r>0$, $\beta:=p'>0$, \mbox{$1/\gamma:=1/p-1/r>0$}, $a:=1/r+\varepsilon$, $b:=p/r$, $c:=1/r'-\varepsilon$, $d:=r'/p'$, $e:=1-p/r$, $f:=r'/p-r'/r$. These choices suffice the following relations:
\begin{align*}
1/\alpha+1/\beta+1/\gamma=1, \hspace{0.5cm} a+c&=1, \hspace{0.5cm} b\alpha=p, \hspace{0.5cm} d\beta=r', \hspace{0.5cm} a\alpha>1, \\
b+e&=1, \hspace{0.5cm} e\gamma=p, \hspace{0.5cm} f\gamma=r', \hspace{0.5cm} c\beta>1, \\
d+f&=1.
\end{align*}
By applying the three-way Young inequality, see~\prettyref{lemma:threeway_youngs_inequality}, we obtain
\begin{align*}
I_{K,p,r} &\leq \int_X\int_X|K(x,y)m_v(x,y)|^a|f(y)v(y)|^b\cdot|K(x,y)m_v(x,y)|^c|h(x)v(x)^{-1}|^d\\
&\hspace{2cm}\cdot|g(y)v(y)|^e|h(x)v(x)^{-1}|^f\,\mathrm{d}\mu(x)\,\mathrm{d}\mu(y) \\
&\leq \frac{1}{\alpha}\int_X\int_X|K(x,y)m_v(x,y)|^{a\alpha}|g(y)v(y)|^p\,\mathrm{d}\mu(x)\,\mathrm{d}\mu(y) \\
&\hspace{2cm}+ \frac{1}{\beta}\int_X\int_X|K(x,y)m_v(x,y)|^{c\beta}|h(x)v(x)^{-1}|^{r'}\,\mathrm{d}\mu(x)\,\mathrm{d}\mu(y) \\
&\hspace{2cm}+ \frac{1}{\gamma}\int_X\int_X|g(y)v(y)|^p|h(x)v(x)^{-1}|^{r'}\,\mathrm{d}\mu(x)\,\mathrm{d}\mu(y).
\end{align*}
For the first summand we deduce the estimation
\begin{align*}
&\int_X\int_X|K(x,y)m_v(x,y)|^{a\alpha}|g(y)v(y)|^p\,\mathrm{d}\mu(x)\,\mathrm{d}\mu(y) \\
&\hspace{2cm} \leq \left(\esssup_{y\in X}\int_X|K(x,y)|^{a\alpha}|m_v(x,y)|^{a\alpha}\,\mathrm{d}\mu(x)\right)\int_X|g(y)|^p|v(y)|^p\,\mathrm{d}\mu(y) \\
&\hspace{2cm} \leq \norm{K}{\mathcal{A}_{a\alpha,m_v}}^{a\alpha}\norm{g}{L_{p,v}}^p
\end{align*}
and the other two summands can be treated analogously. Thus we obtain
\begin{align*}
I_{K,p,r}&\leq \frac{1}{\alpha}\norm{K}{\mathcal{A}_{a\alpha,m_v}}^{a\alpha}\norm{g}{L_{p,v}}^p + \frac{1}{\beta}\norm{K}{\mathcal{A}_{c\beta,m_v}}^{c\beta}\norm{h}{L_{r',\frac{1}{v}}}^{r'}\\
&\quad +\frac{1}{\gamma}\norm{g}{L_{p,v}}^p\norm{h}{L_{r',\frac{1}{v}}}^{r'} \\
&\leq \max\left\{1,\norm{K}{\mathcal{A}_{a\alpha,m_v}}^{a\alpha},\norm{K}{\mathcal{A}_{c\beta,m_v}}^{c\beta}\right\} =: C_K
\end{align*}
for all $g,h$. Hence, $\norm{K}{L_{p,v}\to L_{r,v}} \leq C_K$.\par
If $1< p<r=\infty$ and $g\in L_p(X,\mu)$ arbitrary, it follows with H\"older's inequality that
\begin{align*}
\norm{K(g)}{L_{\infty,v}} &\leq \esssup_{x\in X}\int_X|K(x,y)m_v(x,y)|\cdot|g(y)v(y)|\,\mathrm{d}\mu(y) \\
&\leq \norm{K}{\mathcal{A}_{p',m_v}}^{p'}\norm{g}{L_{p,v}}^p,
\end{align*}
which concludes the proof.
\end{proof}

\begin{remark}\label{remark:kernel_property}
\begin{thmlist}
\item The assumptions in \prettyref{lemma:kernel_property} can be weakened in the sense, that we only need specific $q>1$ for the assertion to hold, but this setting is sufficient for our work.
\item The proof is similar to the proof of Schur's test, also known as the generalized Young inequality. By letting $K\in\mathcal{A}_{1,m_v}$ and $p=r$ it follows that $1/\gamma=0$ and $a\alpha=c\beta=1$. This means we only use the two-way Young inequality and we are in the setting of Schur's test, see~\prettyref{lemma:schurs_test}.
\end{thmlist}
\end{remark}

\subsection{Coorbit spaces}

Before introducing coorbit spaces the concept of signals can first be generalized from elements of the Hilbert space $\cH$ to a suitable space of distributions.
First of all, for $1\leq\tau\leq 2$ consider the spaces \[ \cH_{\tau,v}:=\{f\in\cH,V_\frF f\in L_{\tau,v}(X,\mu)\} \] of test functions equipped with the natural norm \[ \norm{f}{\cH_{\tau,v}}:=\norm{V_\frF f}{L_{\tau,v}}. \]
First we note, that these spaces are non-empty, moreover the following Lemma holds.

\begin{lemma}\label{lemma:frame_subset}
If $R_\frF\in\cA_{\tau,m_v}$, then $\frF\subset\cH_{\tau,v}$.
\end{lemma}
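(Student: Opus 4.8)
The plan is to unwind the definition of the test-space norm and reduce the claim to exactly the integrability that is assumed for $R_\frF$. The starting point is the observation that, by the definition of the reproducing kernel in \prettyref{eq:kernel}, the voice transform of a fixed frame element $\psi_y$ is just a slice of the kernel along its second variable: $V_\frF\psi_y(x) = \ip{\psi_y}{\psi_x} = R_\frF(x,y)$ for every $x\in X$. Hence $\psi_y\in\cH_{\tau,v}$ is equivalent to $R_\frF(\cdot,y)\in L_{\tau,v}(X,\mu)$, and unwinding the weighted norm gives
\[
\norm{\psi_y}{\cH_{\tau,v}}^\tau = \norm{V_\frF\psi_y}{L_{\tau,v}}^\tau = \int_X \abs{R_\frF(x,y)}^\tau\, v(x)^\tau\,\mathrm{d}\mu(x).
\]

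First I would insert the kernel weight $m_v$ to trade the factor $v(x)$ for $v(y)$. Directly from the definition one has $m_v(x,y)=\max\{v(x)/v(y),\,v(y)/v(x)\}\geq v(x)/v(y)$, hence the pointwise bound $v(x)\leq m_v(x,y)\,v(y)$. Substituting this into the display above yields
\[
\int_X \abs{R_\frF(x,y)}^\tau v(x)^\tau\,\mathrm{d}\mu(x) \leq v(y)^\tau \int_X \abs{R_\frF(x,y)\,m_v(x,y)}^\tau\,\mathrm{d}\mu(x),
\]
so everything is reduced to controlling the inner integral of $\abs{R_\frF\cdot m_v}^\tau$ over the first variable. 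The key step is then to recognise that this inner integral is precisely one of the two quantities whose essential supremum defines $\norm{R_\frF}{\cA_{\tau,m_v}}=\norm{R_\frF\cdot m_v}{\cA_\tau}$; in particular
\[
\left(\int_X \abs{R_\frF(x,y)\,m_v(x,y)}^\tau\,\mathrm{d}\mu(x)\right)^{1/\tau}\leq \norm{R_\frF}{\cA_{\tau,m_v}}
\]
for $\mu$-almost every $y\in X$. Combining the two displays gives the quantitative estimate $\norm{\psi_y}{\cH_{\tau,v}}\leq v(y)\,\norm{R_\frF}{\cA_{\tau,m_v}}$, which is finite by hypothesis, so that $\psi_y\in\cH_{\tau,v}$.

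The one delicate point, which I expect to be the main obstacle, is the essential-supremum nature of the $\cA_\tau$-norm: the bound on the inner integral is a priori guaranteed only for $\mu$-a.e.\ $y$, whereas the assertion $\frF\subset\cH_{\tau,v}$ is phrased for every index. I would address this by noting that the finiteness statement already holds on a set of full measure, which both secures that $\cH_{\tau,v}$ is non-empty and delivers the uniform bound $\norm{\psi_y}{\cH_{\tau,v}}\lesssim v(y)$; the exceptional null set is harmless because the voice transform, and hence membership in $L_{\tau,v}(X,\mu)$, only sees equivalence classes of functions agreeing $\mu$-almost everywhere. The rest of the argument is a one-line weight manipulation, so no further estimates are needed.
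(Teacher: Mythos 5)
Your proposal is correct and follows essentially the same route as the paper's own proof: identify $V_\frF\psi_y$ as the kernel slice $R_\frF(\cdot,y)$, insert the bound $v(x)\leq m_v(x,y)v(y)$, and dominate the remaining integral by $\norm{R_\frF}{\cA_{\tau,m_v}}^\tau$. The essential-supremum subtlety you flag is real (and the paper silently ignores it), though your resolution via equivalence classes does not quite work---the null set lives in the index variable $y$, not in the argument of the function $V_\frF\psi_y$---so strictly speaking both proofs only yield $\psi_y\in\cH_{\tau,v}$ for $\mu$-almost every $y$ unless one knows, e.g.\ by continuity of the kernel, that the essential supremum is attained as a genuine supremum.
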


\begin{proof}
For $x\in X$ arbitrary one has
\begin{align*}
\norm{\psi_x}{\cH_{\tau,v}}^\tau &= \int_X |V_\frF\psi_x(y)|^\tau v(y)^\tau\,\mathrm{d}\mu(y) \\
&\leq v(x)^\tau\int_X |R_\frF(y,x)|^\tau m_v(y,x)^\tau\,\mathrm{d}\mu(y) \\
&\leq v(x)^\tau\norm{R_\frF}{\cA_{\tau,m_v}}^{\tau},
\end{align*}
which proves the assertion.
\end{proof}

Since $\frF$ establishes a frame for $\cH$ this means $\cH_{\tau,v}\subset\cH$ is dense. Moreover, the spaces $\cH_{\tau,v}$ are Banach spaces, as the following Lemma states.

\begin{lemma}\label{lemma:h_banach}
If $R_\frF\in\cA_{\tau',m_v}$ then the space $\cH_{\tau,v}$ is a Banach space.
\end{lemma}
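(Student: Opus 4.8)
The plan is to realize $\cH_{\tau,v}$ as a closed subspace of the Banach space $L_{\tau,v}(X,\mu)$ via the voice transform. Since $V_\frF$ is linear and $\norm{f}{\cH_{\tau,v}}=\norm{V_\frF f}{L_{\tau,v}}$ by definition, $V_\frF$ is an isometry from $\cH_{\tau,v}$ onto its image $V_\frF(\cH_{\tau,v})\subseteq L_{\tau,v}$; it is injective because the frame identity \prettyref{eq:frame} forces $V_\frF f=0$ to imply $\norm{f}{\cH}=0$, which in particular shows $\norm{\cdot}{\cH_{\tau,v}}$ is a genuine norm and not merely a seminorm. Hence it suffices to prove that every $\norm{\cdot}{L_{\tau,v}}$-Cauchy sequence $F_n=V_\frF f_n$ converges in $L_{\tau,v}$ to some $F=V_\frF f$ with $f\in\cH_{\tau,v}$.

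The decisive mapping property, and the only place where the hypothesis $R_\frF\in\cA_{\tau',m_v}$ enters, is that the reproducing kernel operator maps $L_{\tau,v}$ boundedly into $L_\infty$. For $F\in L_{\tau,v}$ I would estimate $|R_\frF F(x)|\le\int_X|R_\frF(x,y)|\,|F(y)|\,\mathrm{d}\mu(y)$ and apply H\"older's inequality with exponents $\tau'$ and $\tau$ against the weight $v^{-\tau'}$. Using the pointwise bound $m_v(x,y)\ge v(x)/v(y)$ together with $v\ge1$ --- exactly in the spirit of the proof of \prettyref{lemma:frame_subset} --- this yields $\norm{R_\frF F}{L_\infty}\le\norm{R_\frF}{\cA_{\tau',m_v}}\norm{F}{L_{\tau,v}}$. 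Note that $\cA_{\tau',m_v}$ does not render $R_\frF$ bounded on $L_{\tau,v}$ itself (that would require $\cA_{1,m_v}$, cf. Schur's test), so this $L_\infty$-estimate, rather than invariance of $L_{\tau,v}$, is what the argument must exploit.

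With this in hand I would run the completeness argument as follows. Each $F_n$ satisfies the reproducing identity $R_\frF F_n=F_n$, and since $F_n-F_m=R_\frF(F_n-F_m)$ the $L_\infty$-bound above turns the $L_{\tau,v}$-Cauchy sequence into an $L_\infty$-Cauchy sequence. Because $1\le\tau\le2$, the elementary interpolation $\norm{G}{L_2}^2\le\norm{G}{L_\infty}^{2-\tau}\norm{G}{L_\tau}^\tau$ combined with $\norm{G}{L_\tau}\le\norm{G}{L_{\tau,v}}$ (valid since $v\ge1$) upgrades this to $\norm{F_n-F_m}{L_2}^2\lesssim\norm{F_n-F_m}{L_{\tau,v}}^2$, so $(F_n)$ is Cauchy in $L_2$ as well. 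The frame identity \prettyref{eq:frame} gives $\norm{f_n-f_m}{\cH}=A^{-1/2}\norm{F_n-F_m}{L_2}$, whence $(f_n)$ is Cauchy in the complete space $\cH$ and converges to some $f\in\cH$; by \prettyref{eq:frame} again $V_\frF f_n\to V_\frF f$ in $L_2$. Passing to a common subsequence along which $V_\frF f_n\to V_\frF f$ and $F_n\to F$ both hold almost everywhere identifies $F=V_\frF f$, so $V_\frF f\in L_{\tau,v}$, i.e. $f\in\cH_{\tau,v}$, and $\norm{f_n-f}{\cH_{\tau,v}}=\norm{F_n-F}{L_{\tau,v}}\to0$.

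The main obstacle is the second paragraph: the natural guess --- that $R_\frF$ is a bounded projection on $L_{\tau,v}$ with $\cH_{\tau,v}$ its fixed-point space --- fails under the weaker hypothesis $R_\frF\in\cA_{\tau',m_v}$. The resolution is to accept only the $L_{\tau,v}\to L_\infty$ bound and to recover convergence in $\cH$ through the interpolation step, which is precisely where $\tau\le2$ is needed; everything else is routine bookkeeping with the isometry $V_\frF$ and the reproducing identity.
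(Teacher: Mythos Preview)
Your proof is correct and follows essentially the same approach as the paper: both hinge on the H\"older estimate $\norm{R_\frF F}{L_\infty}\lesssim\norm{R_\frF}{\cA_{\tau',m_v}}\norm{F}{L_{\tau,v}}$ and the interpolation $L_\infty\cap L_{\tau,v}\subset L_2$ to pass from $L_{\tau,v}$-convergence to $L_2$-convergence. The only cosmetic difference is the endgame: the paper first takes the $L_{\tau,v}$-limit $g$, argues $R_\frF g=g$ and $g\in L_2$, and then invokes that $R_\frF$ is the orthogonal projection onto $V_\frF(\cH)$ to produce $f$; you instead show $(f_n)$ is Cauchy in $\cH$ directly and identify limits via a.e.\ subsequences, which is arguably cleaner since it sidesteps justifying $R_\frF g=g$ for the limit.
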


\begin{proof}
Let $\{f_n\}_{n\in\N}\subset\cH_{\tau,v}\subset\cH$ be a Cauchy sequence, which means $\{g_n\}_{n\in\N}:=\{V_\frF f_n\}_{n\in\N}$ is a Cauchy sequence in $L_{\tau,v}(X,\mu)$.
By the completeness of $L_{\tau,v}$ there exists a unique $g\in L_{\tau,v}$ with $g_n\to g$.
Furthermore, by the reproducing formula it holds $R_\frF(g_n)=g_n$ for all $n\in\N$, which implies $R_\frF(g)=g$.
Then, by H\"older's inequality, for every $x\in X$ it holds
\begin{align*}
|R_\frF(g)(x)| &\leq \int_X|R_\frF(x,y)g(y)|\,\mathrm{d}\mu(y) \\
&\leq \norm{R_\frF(x,\cdot)}{L_{\tau',\frac{1}{v}}}\cdot\norm{g}{L_{\tau,v}} \\
&\leq v(x)^{-1}\norm{R_\frF}{\cA_{\tau',m_v}}\cdot\norm{g}{L_{\tau,v}}.
\end{align*}
Thus, $g=R_\frF(g)\in L_\infty$ and since $L_\infty\cap L_{\tau,v}\subset L_2$ it follows $g\in L_2$. Since the application of $R_\frF$ is the orthogonal projection from $L_2$ onto the image of $V_\frF$ there exists $f\in\cH$ such that $g=V_\frF f$. Moreover, $V_\frF f\in L_{\tau,v}$ means $f\in\cH_{\tau,v}$ and $f_n\to f\in\cH_{\tau,v}$.
\end{proof}

Hence, this set of test functions leads to the Gelfand triple setting of dense embeddings \[ \cH_{\tau,v} \hookrightarrow \cH \cong \cH^\sim \hookrightarrow (\cH_{\tau,v})^\sim \] with $(\cH_{\tau,v})^\sim$ being the canonical anti-dual space (the space of all conjugate linear, continuous functionals) of $\cH_{\tau,v}$ and this space can be interpreted as a space of distributions.
An element $h\in(\cH_{\tau,v})^\sim$ is hereby identified with the functional $f\to\ip{h}{f}$.
With these embeddings it is possible to extend the notion of the voice transform in a canonical way to elements $f\in(\cH_{\tau,v})^\sim$ by $V_{\frF,\tau}f(x)=f(\psi_x)$.
By \prettyref{lemma:frame_subset} this is well defined.\par 
With assumptions on the reproducing kernel we can prove the following nesting property.

\begin{lemma}\label{lemma:nesting}
If $R_\frF\in\cA_{q,m_v}$ for every $q>1$ then $\cH_{\sigma,v}\subset\cH_{\tau,v}$ and $(\cH_{\tau,v})^\sim\subset(\cH_{\sigma,v})^\sim$ for all $\sigma<\tau$.
\end{lemma}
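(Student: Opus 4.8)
The plan is to prove the two assertions in order: first establish that the inclusion $\cH_{\sigma,v}\hookrightarrow\cH_{\tau,v}$ is continuous, and then obtain $(\cH_{\tau,v})^\sim\subset(\cH_{\sigma,v})^\sim$ by transposing that inclusion. Everything rests on two facts already at hand: the reproducing identity $R_\frF(V_\frF f)=V_\frF f$, and the smoothing behaviour of the kernel operator recorded in \prettyref{lemma:kernel_property}, which under the hypothesis $R_\frF\in\cA_{q,m_v}$ for all $q>1$ says that $R_\frF$ maps $L_{p,v}$ continuously into $L_{r,v}$ whenever $1<p<r\leq\infty$.

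For the embedding, fix $1\leq\sigma<\tau\leq 2$ and $f\in\cH_{\sigma,v}$, and write $g:=V_\frF f\in L_{\sigma,v}$, so that $g=R_\frF(g)$. If $1<\sigma<\tau\leq 2$ I would apply \prettyref{lemma:kernel_property} directly with $p=\sigma$ and $r=\tau$ to get $g=R_\frF(g)\in L_{\tau,v}$ and $\norm{f}{\cH_{\tau,v}}=\norm{g}{L_{\tau,v}}\lesssim\norm{g}{L_{\sigma,v}}=\norm{f}{\cH_{\sigma,v}}$. The borderline value $\sigma=1$ is not covered by that lemma (there $p'=\infty$), so I would treat it by hand: using $v(x)\leq m_v(x,y)\,v(y)$ and Minkowski's integral inequality,
\begin{align*}
\norm{R_\frF(g)}{L_{\tau,v}}&\leq\normBig{\int_X m_v(\cdot,y)\abs{R_\frF(\cdot,y)}\,v(y)\abs{g(y)}\,\mathrm{d}\mu(y)}{L_\tau}\\
&\leq\int_X v(y)\abs{g(y)}\,\norm{m_v(\cdot,y)R_\frF(\cdot,y)}{L_\tau}\,\mathrm{d}\mu(y)\\
&\leq\norm{R_\frF}{\cA_{\tau,m_v}}\,\norm{g}{L_{1,v}},
\end{align*}
where the inner $L_\tau$-norm in $x$ is bounded by the $\esssup_y$ part of $\norm{R_\frF}{\cA_{\tau,m_v}}$, which is finite since $\tau>1$. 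In either case $f\in\cH_{\tau,v}$ with $\norm{f}{\cH_{\tau,v}}\lesssim\norm{f}{\cH_{\sigma,v}}$, so $\cH_{\sigma,v}\subset\cH_{\tau,v}$ is a continuous inclusion.

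For the second assertion I would transpose. The continuous inclusion $\iota\colon\cH_{\sigma,v}\hookrightarrow\cH_{\tau,v}$ induces the restriction map $\iota^\sim\colon(\cH_{\tau,v})^\sim\to(\cH_{\sigma,v})^\sim$, $h\mapsto h|_{\cH_{\sigma,v}}$, which is bounded because $\abs{h(f)}\leq\norm{h}{(\cH_{\tau,v})^\sim}\norm{f}{\cH_{\tau,v}}\lesssim\norm{h}{(\cH_{\tau,v})^\sim}\norm{f}{\cH_{\sigma,v}}$. This restriction is compatible with the extended voice transforms, since $\frF\subset\cH_{\sigma,v}$ gives $V_{\frF,\tau}h(x)=h(\psi_x)=V_{\frF,\sigma}\bigl(h|_{\cH_{\sigma,v}}\bigr)(x)$, so the two realizations of $h$ as functions on $X$ coincide; this is what makes the inclusion of antidual spaces meaningful.

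The main obstacle is to show that $\iota^\sim$ is injective, equivalently that $\cH_{\sigma,v}$ is dense in $\cH_{\tau,v}$. The difficulty is that $R_\frF$ strictly raises integrability and is not bounded on a single $L_{\tau,v}$, so the naive scheme---truncate $g\in L_{\tau,v}$ to some $g_n\in L_{1,v}\cap L_{\tau,v}$ and reproject to $R_\frF(g_n)\in\cH_{\sigma,v}$---does not obviously converge in the $\cH_{\tau,v}$-norm, because $\norm{R_\frF(g_n-g)}{L_{\tau,v}}$ cannot be controlled by $\norm{g_n-g}{L_{\tau,v}}$. I would instead bypass a direct density argument and read off injectivity from the reproducing structure: if $h\in(\cH_{\tau,v})^\sim$ satisfies $h|_{\cH_{\sigma,v}}=0$, then in particular $V_{\frF,\tau}h(x)=h(\psi_x)=0$ for every $x\in X$, and the reproducing identity for the extended transform then forces $h=0$. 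Thus the statement reduces to injectivity of $V_{\frF,\tau}$ on $(\cH_{\tau,v})^\sim$, which is precisely the point where the hypothesis $R_\frF\in\cA_{q,m_v}$ for all $q>1$ enters, guaranteeing $\frF\subset\cH_{\sigma,v}$ and the validity of the extended reproducing formula.
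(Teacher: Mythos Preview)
Your argument is correct and takes the same route as the paper: use the reproducing identity $V_\frF f=R_\frF(V_\frF f)$ together with \prettyref{lemma:kernel_property} to push $V_\frF f$ from $L_{\sigma,v}$ into $L_{\tau,v}$, and then transpose the inclusion. The paper is much terser---it simply applies \prettyref{lemma:kernel_property} (so tacitly $\sigma>1$) and declares the antidual inclusion ``immediate''---whereas you additionally treat the borderline $\sigma=1$ via Minkowski's inequality and spell out injectivity of the restriction map through $\frF\subset\cH_{\sigma,v}$ and the extended voice transform.
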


\begin{proof}
Assume $f\in\cH_{\sigma,v}$, which means $f\in\cH$ with $V_\frF f\in L_{\sigma,v}$. Since the reproducing identity holds it follows $V_\frF f=R_\frF(V_\frF f)\in R_\frF(L_{\sigma,v})$ and with \prettyref{lemma:kernel_property} we derive $V_\frF f\in L_{\tau,v}$, hence $f\in\cH_{\tau,v}$. The second assertion is immediate.
\end{proof}

For the coorbit spaces to be well defined we need the following two auxiliary Lemmas.

\begin{lemma}\label{lemma:equivalent_norm}
The expression $\norm{V_{\frF,\tau}f}{L_{\tau',\frac{1}{v}}(X,\mu)}$ is an equivalent norm on $(\cH_{\tau,v})^\sim$, where $\tau'$ denotes the H\"older-dual of $\tau$.
\end{lemma}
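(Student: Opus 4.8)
The plan is to prove the asserted equivalence by establishing two one-sided estimates between the canonical anti-dual norm $\norm{f}{(\cH_{\tau,v})^\sim}=\sup_{\norm{g}{\cH_{\tau,v}}\le1}\abs{\ip{f}{g}}$ and the expression $\norm{V_{\frF,\tau}f}{L_{\tau',\frac{1}{v}}}$, using throughout that $L_{\tau',\frac{1}{v}}(X,\mu)$ is the anti-dual of $L_{\tau,v}(X,\mu)$ under the unweighted pairing $(F,G)\mapsto\int_X F\overline{G}\,\mathrm{d}\mu$ (H\"older).

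For the estimate $\norm{f}{(\cH_{\tau,v})^\sim}\lesssim\norm{V_{\frF,\tau}f}{L_{\tau',\frac{1}{v}}}$ I would exploit the tightness of $\frF$. The frame identity~\prettyref{eq:frame} yields the weak reconstruction $g=\frac1A\int_X V_\frF g(x)\,\psi_x\,\mathrm{d}\mu(x)$ for $g\in\cH$; pairing with $f\in(\cH_{\tau,v})^\sim$ and pulling the conjugate-linear functional through the integral produces the correspondence
\[ \ip{f}{g}=\frac1A\int_X \overline{V_\frF g(x)}\,V_{\frF,\tau}f(x)\,\mathrm{d}\mu(x),\qquad g\in\cH_{\tau,v}. \]
H\"older's inequality in the $L_{\tau,v}$--$L_{\tau',\frac{1}{v}}$ duality then bounds $\abs{\ip{f}{g}}$ by $\frac1A\norm{V_{\frF,\tau}f}{L_{\tau',\frac{1}{v}}}\norm{V_\frF g}{L_{\tau,v}}=\frac1A\norm{V_{\frF,\tau}f}{L_{\tau',\frac{1}{v}}}\norm{g}{\cH_{\tau,v}}$, and taking the supremum over $\norm{g}{\cH_{\tau,v}}\le1$ gives the claim. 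The point requiring care is the interchange of $f$ with the integral: the reconstruction converges a priori only in $\cH$, which is weaker than the $\cH_{\tau,v}$-topology on which $f$ is continuous, so I would justify the correspondence first on a dense class of $g$ (or directly via the reproducing identity $R_\frF(V_\frF g)=V_\frF g$) and extend by continuity.

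For the reverse estimate I would pass to the isometric image $M:=V_\frF(\cH_{\tau,v})\subseteq L_{\tau,v}$, which is closed since $\cH_{\tau,v}$ is a Banach space by~\prettyref{lemma:h_banach} and $V_\frF$ is an isometry onto $M$. Viewing $f$ as an anti-linear functional on $M$ and extending it to $L_{\tau,v}$ by Hahn--Banach produces some $F\in L_{\tau',\frac{1}{v}}$ with $\norm{F}{L_{\tau',\frac{1}{v}}}=\norm{f}{(\cH_{\tau,v})^\sim}$ and $\ip{f}{g}=\int_X\overline{V_\frF g}\,F\,\mathrm{d}\mu$. Testing against $g=\psi_x$, which lies in $\cH_{\tau,v}$ by~\prettyref{lemma:frame_subset}, and using $\overline{R_\frF(y,x)}=R_\frF(x,y)$ identifies the voice transform as the kernel image $V_{\frF,\tau}f=R_\frF(F)$; it then remains to bound $\norm{R_\frF(F)}{L_{\tau',\frac{1}{v}}}$ by $\norm{F}{L_{\tau',\frac{1}{v}}}$.

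That last bound is the main obstacle, and it is exactly where the generalized setting bites: genuine same-space boundedness of the operator $R_\frF$ on $L_{\tau',\frac{1}{v}}$ is an $\cA_{1,m_v}$-type (Schur) phenomenon, whereas~\prettyref{lemma:kernel_property} furnishes only the strict smoothing $R_\frF(L_{p,\frac{1}{v}})\hookrightarrow L_{r,\frac{1}{v}}$ for $p<r$ (applicable with the weight $\tfrac1v$ since $m_{1/v}=m_v$), and here $R_\frF\notin\cA_{1,m_v}$. My plan is to circumvent this using that $V_{\frF,\tau}f$ is a reproducing fixed point, $V_{\frF,\tau}f=R_\frF(V_{\frF,\tau}f)$, together with the a priori endpoint bound from~\prettyref{lemma:frame_subset}, namely $\abs{V_{\frF,\tau}f(x)}\le\norm{f}{(\cH_{\tau,v})^\sim}\norm{\psi_x}{\cH_{\tau,v}}\lesssim\norm{f}{(\cH_{\tau,v})^\sim}\,v(x)$, so that $V_{\frF,\tau}f\in L_{\infty,\frac{1}{v}}$. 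Combining this control at $r=\infty$ with the smoothing estimates applied to the representation $V_{\frF,\tau}f=R_\frF(F)$, and with the fact that $R_\frF$ annihilates the annihilator $M^\perp$ (so $R_\frF(F)$ depends only on the class of $F$ and the distance bound from the first estimate is available), is what I expect to pin the $L_{\tau',\frac{1}{v}}$-norm of $V_{\frF,\tau}f$ to $\norm{F}{L_{\tau',\frac{1}{v}}}=\norm{f}{(\cH_{\tau,v})^\sim}$. I anticipate this interplay between the projection/fixed-point structure of $R_\frF$ and its non-endpoint mapping properties to be the technical heart of the argument.
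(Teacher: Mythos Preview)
Your first estimate and the correspondence formula $\ip{f}{g}=\tfrac1A\int_X V_{\frF,\tau}f\,\overline{V_\frF g}\,\mathrm{d}\mu$ are correct and match the paper's opening move (phrased there as ``$V_{\frF,\tau}$ acts unitarily'', with $A=1$ for a Parseval frame).

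For the reverse inequality your route diverges and runs into a genuine obstacle. The Hahn--Banach representation $V_{\frF,\tau}f=R_\frF(F)$ with $\norm{F}{L_{\tau',\frac1v}}=\norm{f}{(\cH_{\tau,v})^\sim}$ reduces the problem to bounding $\norm{R_\frF(F)}{L_{\tau',\frac1v}}$ by $\norm{F}{L_{\tau',\frac1v}}$, which is exactly the endpoint $\cA_{1,m_v}$ estimate the whole framework is designed to avoid. Your proposed workaround does not close the gap: the smoothing estimates $R_\frF(L_{p,\frac1v})\hookrightarrow L_{r,\frac1v}$ only produce control in $L_{r,\frac1v}$ for $r>\tau'$, the a priori $L_{\infty,\frac1v}$ bound lies on the wrong side of $\tau'$ for interpolation to reach $L_{\tau',\frac1v}$, and the observation that $R_\frF$ kills $M^\perp$ merely says $R_\frF(F)$ is canonically determined, not that it is small.

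The paper avoids all of this. Once the correspondence formula is established, the anti-dual norm is the supremum of $\bigl|\int_X V_{\frF,\tau}f\,\overline H\,\mathrm{d}\mu\bigr|$ over $H$ in the image $V_\frF(\cH_{\tau,v})\subset L_{\tau,v}$ with $\norm{H}{L_{\tau,v}}\le1$. The paper then asserts that this image equals $L_2\cap L_{\tau,v}$, hence is dense in $L_{\tau,v}$; density lets the restricted supremum be replaced by the full supremum over the unit ball of $L_{\tau,v}$, which is $\norm{V_{\frF,\tau}f}{L_{\tau',\frac1v}}$ by duality. This yields exact equality of the two norms in one stroke, with no Hahn--Banach and no mapping property of $R_\frF$ required. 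The density of the image in $L_{\tau,v}$ is the idea your proposal is missing.
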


\begin{proof}
First we note that $V_{\frF}$ is acting as a unitary operator on $\cH$, and so does $V_{\frF,\tau}$.
Moreover, by definition we have $V_{\frF,\tau}(\cH_{\tau,v})=L_{2}\cap L_{\tau,v}$, which is dense in $L_{\tau,v}$.
Then, by definition of the norm one has
\begin{align*}
\norm{F}{(\cH_{\tau,v})^\sim} &= \sup_{\substack{h\in\cH_{\tau,v} \\ \norm{h}{\cH_{\tau,v}}\leq 1}}\abs{\ip{F}{h}} \\
&= \sup_{\substack{h\in\cH_{\tau,v} \\ \norm{V_{\frF,\tau}h}{L_{q,v}}\leq 1}}\abs{\ip{V_{\frF,\tau}F}{V_{\frF,\tau}h}} \\
&= \sup_{\substack{H\in V_{\frF,\tau}(\cH_{\tau,v}) \\ \norm{H}{L_{q,v}}\leq 1}}\abs{\ip{V_{\frF,\tau}F}{H}} \\
&= \sup_{\substack{H\in L_{\tau,v} \\ \norm{H}{L_{\tau,v}}\leq 1}}\abs{\ip{V_{\frF,\tau}F}{H}} \\
&= \norm{V_{\frF,\tau}F}{L_{\tau',\frac{1}{v}}},
\end{align*}
which concludes the proof.
\end{proof}

\begin{lemma}\label{lemma:h_properties}
\begin{thmlist}
\item For $f\in(\cH_{\tau,v})^\sim$ it holds $V_{\frF,\tau}f\in L_{\tau',\frac{1}{v}}$ and the mappings \mbox{$V_{\frF,\tau}:(\cH_{\tau,v})^\sim\to L_{\tau',\frac{1}{v}}$} are injective.
\item The reproducing formula extends to $(\cH_{\tau,v})^\sim$, i.e. $R_{\frF}(V_{\frF,\tau}f)=V_{\frF,\tau}f$ for all $f\in(\cH_{\tau,v})^\sim$.
\item Conversely, if $F\in L_{\tau',\frac{1}{v}}$ satisfies the reproducing property $R_{\frF}(F)=F$ then there exists $f\in(\cH_{\tau,v})^\sim$ such that $V_{\frF,\tau}f=F$.
\end{thmlist}
\end{lemma}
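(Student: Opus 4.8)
The plan is to reduce all three assertions to one pairing identity linking the abstract duality bracket of the Gelfand triple to the concrete integral pairing of voice transforms, namely
\[
\ip{f}{g} = \int_X V_{\frF,\tau}f(y)\,\overline{V_\frF g(y)}\,\mathrm{d}\mu(y),\qquad f\in(\cH_{\tau,v})^\sim,\ g\in\cH_{\tau,v}.
\]
This is already implicit in the proof of \prettyref{lemma:equivalent_norm}: for $f,g\in\cH$ it is the unitarity $\ip{f}{g}_\cH=\ip{V_\frF f}{V_\frF g}_{L_2}$ of the voice transform, and it extends to $f\in(\cH_{\tau,v})^\sim$ by the density of $\cH$ in $(\cH_{\tau,v})^\sim$ coming from the Gelfand triple, together with the continuity of the bracket and of $V_{\frF,\tau}:(\cH_{\tau,v})^\sim\to L_{\tau',\frac{1}{v}}$ (again \prettyref{lemma:equivalent_norm}). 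The one elementary computation I would use repeatedly is $\overline{V_\frF\psi_x(y)}=\overline{\ip{\psi_x}{\psi_y}}=\ip{\psi_y}{\psi_x}=R_\frF(x,y)$.

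For \myref{h_properties}{i} I would simply invoke \prettyref{lemma:equivalent_norm}: it states that $\norm{V_{\frF,\tau}f}{L_{\tau',\frac{1}{v}}}$ is an equivalent norm on $(\cH_{\tau,v})^\sim$, which in particular forces $V_{\frF,\tau}f\in L_{\tau',\frac{1}{v}}$ for every $f$ and shows that this quantity vanishes only for $f=0$; the latter is exactly injectivity of $V_{\frF,\tau}$. For the reproducing formula I would start from $V_{\frF,\tau}f(x)=f(\psi_x)=\ip{f}{\psi_x}$, which is legitimate since $\psi_x\in\cH_{\tau,v}$ by \prettyref{lemma:frame_subset}. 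Applying the pairing identity with $g=\psi_x$ and then the computation $\overline{V_\frF\psi_x(y)}=R_\frF(x,y)$ yields
\[
V_{\frF,\tau}f(x)=\int_X V_{\frF,\tau}f(y)\,R_\frF(x,y)\,\mathrm{d}\mu(y)=R_\frF(V_{\frF,\tau}f)(x),
\]
which is precisely the extended reproducing formula.

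For the converse I would construct the candidate functional explicitly: given $F\in L_{\tau',\frac{1}{v}}$ with $R_\frF(F)=F$, set $f(g):=\int_X F(y)\,\overline{V_\frF g(y)}\,\mathrm{d}\mu(y)$ for $g\in\cH_{\tau,v}$. Conjugate-linearity in $g$ is immediate, and H\"older's inequality gives $\abs{f(g)}\leq\norm{F}{L_{\tau',\frac{1}{v}}}\norm{V_\frF g}{L_{\tau,v}}=\norm{F}{L_{\tau',\frac{1}{v}}}\norm{g}{\cH_{\tau,v}}$, so $f\in(\cH_{\tau,v})^\sim$. Evaluating on $\psi_x$ and using $\overline{V_\frF\psi_x(y)}=R_\frF(x,y)$ together with the hypothesis $R_\frF(F)=F$ gives $V_{\frF,\tau}f(x)=f(\psi_x)=R_\frF(F)(x)=F(x)$, as desired.

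I expect the only genuinely delicate step to be the justification of the pairing identity beyond $\cH$, since it is the bridge transporting the Hilbert-space unitarity of $V_\frF$ to the duality of the inhomogeneous spaces; the density of $\cH$ in $(\cH_{\tau,v})^\sim$ and the continuity statements already contained in \prettyref{lemma:equivalent_norm} are exactly what make the passage to the limit work. Everything else is bookkeeping with conjugations, where the main care is to keep the anti-linearity consistent with the anti-dual convention $h\mapsto\ip{h}{\cdot}$.
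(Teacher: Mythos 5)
Your proposal is correct and parts (i) and (iii) essentially coincide with the paper's proof: (i) is read off from \prettyref{lemma:equivalent_norm} in both cases, and your functional $f(g)=\int_X F(y)\overline{V_\frF g(y)}\,\mathrm{d}\mu(y)$ is exactly the paper's adjoint $V_{\frF,\tau}^\ast F=\int_X F(x)\psi_x\,\mathrm{d}\mu(x)$ written out against test functions, with the same evaluation at $\psi_x$ closing the argument. The genuine divergence is in (ii). The paper exhausts $X$ by compact sets $U_n$, truncates $V_{\frF,\tau}f$ to $F_n=\chi_{U_n}V_{\frF,\tau}f$, and passes to the limit by dominated convergence using the bound $\normw{R_\frF(x,\cdot)V_{\frF,\tau}f}_{L_1}\leq v(x)\normw{R_\frF}_{\cA_{\tau,m_v}}\normw{f}_{(\cH_{\tau,v})^\sim}$; you instead extend the unitarity identity $\ip{f}{g}=\int_X V_{\frF,\tau}f\,\overline{V_\frF g}\,\mathrm{d}\mu$ from $\cH$ to $(\cH_{\tau,v})^\sim$ by density and continuity and then specialize to $g=\psi_x$. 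Your route is structurally cleaner and makes the duality mechanism explicit, but it leans on the \emph{norm} density of $\cH$ in $(\cH_{\tau,v})^\sim$. The paper asserts the dense Gelfand-triple embeddings without proof, and for $1<\tau\leq 2$ the claim is safe because $\cH_{\tau,v}$ is isometric via $V_\frF$ to a closed subspace of the reflexive space $L_{\tau,v}$, so the adjoint of the dense embedding $\cH_{\tau,v}\hookrightarrow\cH$ has norm-dense range; at $\tau=1$ only weak-$\ast$ density is guaranteed and your limit argument would need reworking, whereas the paper's truncation argument only invokes $R_\frF\in\cA_{\tau,m_v}$ and Hölder. Since the lemma is applied in this paper exclusively for $1<\tau\leq 2$, this caveat does not affect anything downstream, but you should state the reflexivity step explicitly rather than attributing the density to the Gelfand triple as such.
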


\begin{proof}
(i) The assertion follows immediately from \prettyref{lemma:equivalent_norm}.\\
(ii) Suppose that $f\in(\cH_{\tau,v})^\sim$.
Since $X$ is $\sigma$-compact there exists a sequence of nested compact subsets $(U_n)_{n\in\N}$ such that $X=\bigcup_{n\in\N}U_n$.
Denote by $\chi_{U_n}$ the characteristic function of $U_n$ and let $F_n:=\chi_{U_n}V_{\frF,\tau}f\in L_2$.
Obviously this series converges pointwise to $V_{\frF,\tau}f$.
For any $x\in X$ we then have
\begin{align*}
R_\frF(x,y)F_n(y) = 
\begin{cases}
R_\frF(x,y)V_{\frF,\tau}f(y), & y\in U_n,\\
0 , & \mbox{else},
\end{cases}
\end{align*}
which means that $|R_\frF(x,y)F_n(y)|\leq|R_\frF(x,y)V_{\frF,\tau}f(y)|$ for all $y\in X$.
Furthermore the expression $R_\frF(x,\cdot)V_{\frF,\tau}f$ is $L_1$-integrabel and by H\"older's inequality we obtain the estimation
\begin{align*}
\norm{R_\frF(x,\cdot)V_{\frF,\tau}f}{L_1} &\leq \norm{R_\frF(x,\cdot)}{L_{\tau,v}}\,\norm{V_{\frF,\tau}f}{L_{\tau',\frac{1}{v}}} \\
&\leq v(x)\norm{R_\frF}{\cA_{\tau,m_v}}\,\norm{f}{(\cH_{\tau,v})^\sim}
\end{align*}
for every $x\in X$. Since the reproducing property holds for every $F_n$ and because of Lebesgue's convergence theorem we obtain
\begin{align*}
V_{\frF,\tau}f(x) &= \lim_{n\to\infty}F_n(x) = \lim_{n\to\infty}\int_X R_\frF(x,y)F_n(y)\,\mathrm{d}\mu(y)\\
& = \int_X R_\frF(x,y)V_{\frF,\tau}f(y)\,\mathrm{d}\mu(y) = R_\frF(V_{\frF,\tau}f)(x).
\end{align*}
(iii) The adjoint mapping of $V_{\frF,\tau}:\cH_{\tau,v}\to L_{\tau,v}$ is given by
\begin{align*}
V_{\frF,\tau}^\ast:L_{\tau',\frac{1}{v}}\to(\cH_{\tau,v})^\sim, \quad V_{\frF,\tau}^\ast F=\int_X F(x)\psi_x\,\mathrm{d}\mu(x) \quad \mbox{for }F\in L_{\tau',\frac{1}{v}}.
\end{align*}
Thus for $f:=V_{\frF,\tau}^\ast F\in(\cH_{\tau,v})^\sim$ it holds
\begin{align*}
F(y) = R_\frF F(y) = \int_X\ip{\psi_x}{\psi_y}F(x)\,\mathrm{d}\mu(x) = V_{\frF,\tau}V_{\frF,\tau}^\ast F(y) = V_{\frF,\tau}f(y)
\end{align*}
for every $y\in X$.
\end{proof}

Now we are ready to define the coorbit spaces.

\begin{definition}\label{definition:coorbit_spaces}
The coorbit spaces of $L_{p,v}(X,\mu)$ with respect to the frame $\frF=\{\psi_x\}_{x\in X}$ and the integrability parameter $\tau$ are defined as
\begin{align*}
\mathrm{Co}_{\frF,\tau}(L_{p,v}) := \left\{f\in(\cH_{\tau,v})^\sim:V_{\frF,\tau}f\in L_{p,v}(X,\mu)\right\}
\end{align*}
endowed with the natural norms
\begin{align*}
\norm{f}{\mathrm{Co}_{\frF,\tau}(L_{p,v})} := \norm{V_{\frF,\tau}f}{L_{p,v}}.
\end{align*}
\end{definition}

The following proposition is essential when dealing with coorbit spaces.

\begin{proposition}\label{proposition:co_properties}
Suppose that $R_\frF(L_{p,v})\subset L_{\tau',\frac{1}{v}}$.
\begin{thmlist}
\item A function $F\in L_{p,v}$ is of the form $V_{\frF,\tau}f$ for some $f\in\mathrm{Co}_{\frF,\tau}(L_{p,v})$ if and only if $R_\frF F=F$.
\item The spaces $(\mathrm{Co}_{\frF,\tau}(L_{p,v}),\norm{\cdot}{\mathrm{Co}_{\frF,\tau}(L_{p,v})})$ are Banach spaces.
\item The map $V_{\frF,\tau}:\mathrm{Co}_{\frF,\tau}(L_{p,v})\to L_{p,v}$ induces an isometric isomorphism between $\mathrm{Co}_{\frF,\tau}(L_{p,v})$ and the reproducing kernel space $\{F\in L_{p,v}:R_\frF F=F\}\subset L_{p,v}$.
\end{thmlist}
\end{proposition}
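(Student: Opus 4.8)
The plan is to derive all three assertions from \prettyref{lemma:h_properties} together with the standing integrability of the kernel (namely $R_\frF\in\cA_{q,m_v}$ for all $q>1$), proving them in the order (i), (iii), (ii). The only substantial point is the Banach-space property (ii), which I intend to reduce, via (iii), to the closedness of the reproducing-kernel subspace of $L_{p,v}$.

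For (i), the forward implication is immediate: if $f\in\mathrm{Co}_{\frF,\tau}(L_{p,v})\subset(\cH_{\tau,v})^\sim$ and $F=V_{\frF,\tau}f$, then the extension of the reproducing formula to the anti-dual, \prettyref{lemma:h_properties}(ii), gives $R_\frF F=R_\frF(V_{\frF,\tau}f)=V_{\frF,\tau}f=F$. For the converse I would argue as follows: if $F\in L_{p,v}$ satisfies $R_\frF F=F$, then $F=R_\frF F\in R_\frF(L_{p,v})\subset L_{\tau',\frac{1}{v}}$ by the hypothesis of the proposition, so $F$ is an element of $L_{\tau',\frac{1}{v}}$ fixed by $R_\frF$; \prettyref{lemma:h_properties}(iii) then produces $f\in(\cH_{\tau,v})^\sim$ with $V_{\frF,\tau}f=F$, and since $V_{\frF,\tau}f=F\in L_{p,v}$ we conclude $f\in\mathrm{Co}_{\frF,\tau}(L_{p,v})$.

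For (iii), write $M:=\{F\in L_{p,v}:R_\frF F=F\}$. By the very definition of the norm one has $\norm{f}{\mathrm{Co}_{\frF,\tau}(L_{p,v})}=\norm{V_{\frF,\tau}f}{L_{p,v}}$, so $V_{\frF,\tau}$ is isometric on $\mathrm{Co}_{\frF,\tau}(L_{p,v})$; it is injective because $V_{\frF,\tau}:(\cH_{\tau,v})^\sim\to L_{\tau',\frac{1}{v}}$ is injective by \prettyref{lemma:h_properties}(i); and by part (i) its image is precisely $M$. Hence $V_{\frF,\tau}$ is an isometric isomorphism of $\mathrm{Co}_{\frF,\tau}(L_{p,v})$ onto $M$.

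Finally, (ii) follows once $M$ is shown to be a closed subspace of the Banach space $L_{p,v}$, for then $M$ is itself a Banach space and, through the isometric isomorphism of (iii), so is $\mathrm{Co}_{\frF,\tau}(L_{p,v})$. This closedness is the main obstacle, since $R_\frF$ need not map $L_{p,v}$ into itself. The key observation I would use is that, for each fixed $x$, the scalar evaluation $F\mapsto R_\frF(F)(x)$ is nonetheless a bounded functional on $L_{p,v}$: using $m_v(x,y)\ge v(x)/v(y)$ and H\"older's inequality one obtains
\[
|R_\frF(F)(x)|\le\norm{R_\frF(x,\cdot)}{L_{p',\frac{1}{v}}}\,\norm{F}{L_{p,v}}\le v(x)^{-1}\,\norm{R_\frF}{\cA_{p',m_v}}\,\norm{F}{L_{p,v}},
\]
which is finite since $R_\frF\in\cA_{p',m_v}$. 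Consequently, if $F_n\to F$ in $L_{p,v}$ with $R_\frF F_n=F_n$, then $R_\frF(F_n)(x)\to R_\frF(F)(x)$ for every $x$; passing to a subsequence with $F_{n_k}\to F$ $\mu$-almost everywhere and using $R_\frF(F_{n_k})(x)=F_{n_k}(x)$ yields $R_\frF(F)(x)=F(x)$ for almost every $x$, i.e. $F\in M$. This shows $M$ is closed and completes the argument; for the boundary case $p=1$ the exponent $p'=\infty$ only requires the obvious modification of the H\"older step, estimating by $\norm{R_\frF(x,\cdot)}{L_{\infty,\frac{1}{v}}}$.
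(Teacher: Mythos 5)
Your proof is correct and follows the same skeleton as the paper's: (i) is the combination of \prettyref{lemma:h_properties}(ii) for the forward direction and the hypothesis $R_\frF(L_{p,v})\subset L_{\tau',\frac{1}{v}}$ plus \prettyref{lemma:h_properties}(iii) for the converse, and (iii) is (i) together with injectivity and the definition of the norm. The one place where you genuinely diverge is (ii). The paper runs a direct Cauchy-sequence argument and, after obtaining the limit $F\in L_{p,v}$ of $F_n=V_{\frF,\tau}f_n$, simply asserts ``$R_\frF F_n=F_n$ for all $n$ and hence $R_\frF F=F$'' --- the continuity needed to pass the reproducing identity to the limit is left implicit. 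You instead route (ii) through (iii), reducing it to closedness of $M=\{F\in L_{p,v}:R_\frF F=F\}$, and you justify that closedness by showing the pointwise evaluations $F\mapsto R_\frF(F)(x)$ are bounded functionals on $L_{p,v}$ via $\normw{R_\frF(x,\cdot)}_{L_{p',\frac1v}}\leq v(x)^{-1}\norm{R_\frF}{\cA_{p',m_v}}$ and then combining pointwise convergence of $R_\frF(F_n)(x)$ with an a.e.\ convergent subsequence of $F_n$. This is a genuine improvement in rigor; its only cost is that it invokes the standing assumption $R_\frF\in\cA_{q,m_v}$ for $q=p'>1$, which is stronger than the bare hypothesis of the proposition (and for the boundary case $p=1$ you would need $R_\frF\in\cA_{\infty,m_v}$, which does not formally follow from membership in $\cA_{q,m_v}$ for all finite $q>1$; in the shearlet setting of the paper this is harmless, but it deserves a word). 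Both arguments are otherwise equivalent, and yours makes explicit a step the paper glosses over.
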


\begin{proof}
(i) Assume $f\in\mathrm{Co}_{\frF,\tau}(L_{p,v})$, then by definition $f\in(\cH_{\tau,v})^\sim$ and by \prettyref{lemma:h_properties} ii) the reproducing identity holds.
Conversely, if $F\in L_{p,v}$ satisfies $R_\frF F=F$ we deduce by our assumption $F\in L_{\tau',\frac{1}{v}}$.
\prettyref{lemma:h_properties} iii) implies that there exists $f\in(\cH_{\tau,v})^\sim$ such that $V_\frF f=F$, which shows the assertion.\\
(ii) Suppose that $\{f_n\}_{n\in\N}$ is a Cauchy sequence in $\mathrm{Co}_{\frF,\tau}(L_{p,v})$ implying that $F_n := V_{\frF,\tau} f_n$ is a Cauchy sequence in $L_{p,v}$.
By the completeness of $L_{p,v}$ this sequence convergences to an element $F\in L_{p,v}$.
By i) it holds $R_\frF F_n=F_n$ for all $n\in\N$ and hence $R_\frF F=F$.
Again by i) there exists an $f\in\mathrm{Co}_{\frF,q}(L_{p,v})$ with $V_{\frF,\tau}f=F$ and the completeness is shown.\\
(iii) The assertion follows with (i) and the injectivity of $V_{\frF,\tau}$.
\end{proof}

\begin{remark}\label{remark:assumption_fulfilled}
The assumption in \prettyref{proposition:co_properties} may appear strange, but is readily fulfilled for the following setting. If we assume $R_\frF\in\cA_{q,m_v}$ for all $q>1$ then it follows from \prettyref{lemma:kernel_property} that \mbox{$R_\frF(L_{p,v}) \subset L_{\tau',v} \subset L_{\tau',\frac{1}{v}}$} for all \mbox{$1<p<\tau'<\infty$}.
\end{remark}

\subsection{Dependency on $\tau$, $p$, $v$ and $\frF$}

We will now discuss the dependency of the coorbit spaces on the parameters involved.
To this end we always assume $R_\frF\in\cA_{q,m_v}$ for all $q>1$ as suggested in \prettyref{remark:assumption_fulfilled}.\par 
We can obtain some nesting properties for the parameters $\tau$ and $p$ as well as the weight $v$.

\begin{lemma}\label{lemma:embeddings}
\begin{thmlist}
\item For all $\sigma<\tau$ we have $\mathrm{Co}_{\frF,\tau}(L_{p,v})\subset\mathrm{Co}_{\frF,\sigma}(L_{p,v})$.
\item For all $p<r$ we have $\mathrm{Co}_{\frF,\tau}(L_{p,v})\subset\mathrm{Co}_{\frF,\tau}(L_{r,v})$.
\item For two weights fulfilling $v\leq w$ we have $\mathrm{Co}_{\frF,\tau}(L_{p,w})\subset\mathrm{Co}_{\frF,\tau}(L_{p,v})$.
\end{thmlist}
\end{lemma}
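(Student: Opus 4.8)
The plan is to reduce each of the three inclusions to a statement about the reproducing-kernel subspaces $\cR(L_{p,v}):=\{F\in L_{p,v}:R_\frF F=F\}$, which by \prettyref{proposition:co_properties}~(iii) are identified isometrically with the coorbit spaces through the (extended) voice transform. The three embeddings then come down to, respectively, the nesting \prettyref{lemma:nesting} together with consistency of the voice transforms for (i), the smoothing property \prettyref{lemma:kernel_property} for (ii), and the trivial inclusion $L_{p,w}\subset L_{p,v}$ coming from $v\leq w$ for (iii). Throughout I use the standing assumption $R_\frF\in\cA_{q,m_v}$ (and, for (iii), also $R_\frF\in\cA_{q,m_w}$) for all $q>1$, which makes every construction valid and, via \prettyref{remark:assumption_fulfilled}, supplies the hypotheses of \prettyref{proposition:co_properties}.

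For (i), let $f\in\mathrm{Co}_{\frF,\tau}(L_{p,v})$. Since $\sigma<\tau$, \prettyref{lemma:nesting} gives $(\cH_{\tau,v})^\sim\subset(\cH_{\sigma,v})^\sim$, so $f$ is already an element of $(\cH_{\sigma,v})^\sim$. The two voice transforms agree on $f$: because $\psi_x\in\cH_{\sigma,v}\subset\cH_{\tau,v}$ by \prettyref{lemma:frame_subset}, one has $V_{\frF,\sigma}f(x)=f(\psi_x)=V_{\frF,\tau}f(x)$ for every $x\in X$, whence $V_{\frF,\sigma}f=V_{\frF,\tau}f\in L_{p,v}$ and $f\in\mathrm{Co}_{\frF,\sigma}(L_{p,v})$ with equal norms. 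For (ii), the underlying distribution space $(\cH_{\tau,v})^\sim$ does not change with $p$, so it suffices to show $F:=V_{\frF,\tau}f\in L_{r,v}$. By \prettyref{lemma:h_properties}~(ii) we have $R_\frF F=F$, and since $F\in L_{p,v}$ with $1<p<r$, \prettyref{lemma:kernel_property} yields $F=R_\frF F\in L_{r,v}$ together with $\norm{F}{L_{r,v}}\lesssim\norm{F}{L_{p,v}}$; hence $f\in\mathrm{Co}_{\frF,\tau}(L_{r,v})$ and the embedding is continuous.

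Part (iii) is the step I expect to be the main obstacle, because $v\leq w$ gives $\cH_{\tau,w}\subset\cH_{\tau,v}$, so the anti-dual inclusion runs the wrong way, $(\cH_{\tau,v})^\sim\subset(\cH_{\tau,w})^\sim$: an element of $\mathrm{Co}_{\frF,\tau}(L_{p,w})\subset(\cH_{\tau,w})^\sim$ is a priori only a functional on the smaller test space and must be extended to $\cH_{\tau,v}$. I would argue through the reproducing representation. Given $f\in\mathrm{Co}_{\frF,\tau}(L_{p,w})$, set $F:=V_{\frF,\tau}f\in L_{p,w}$; then $R_\frF F=F$, and $v\leq w$ forces $\norm{F}{L_{p,v}}\leq\norm{F}{L_{p,w}}<\infty$, so $F\in L_{p,v}$ with $R_\frF F=F$. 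Applying \prettyref{proposition:co_properties}~(i) for the weight $v$ produces $g\in\mathrm{Co}_{\frF,\tau}(L_{p,v})$ with $V_{\frF,\tau}g=F$, and the norm estimate $\norm{g}{\mathrm{Co}_{\frF,\tau}(L_{p,v})}=\norm{F}{L_{p,v}}\leq\norm{F}{L_{p,w}}=\norm{f}{\mathrm{Co}_{\frF,\tau}(L_{p,w})}$ is immediate.

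The remaining point—where I would spend the most care—is to verify that $g$ is genuinely the canonical extension of $f$, i.e. that $g$ restricts to $f$ on $\cH_{\tau,w}$, so that the inclusion is the natural one. This follows from the adjoint formula of \prettyref{lemma:h_properties}~(iii): by injectivity of $V_{\frF,\tau}$ both $f$ and $g$ are represented by the same superposition $\int_X F(x)\psi_x\,\mathrm{d}\mu(x)$, only interpreted in different anti-dual spaces. Hence for every $h\in\cH_{\tau,w}\subset\cH_{\tau,v}$ one computes $\ip{g}{h}=\int_X F(x)\ip{\psi_x}{h}\,\mathrm{d}\mu(x)=\ip{f}{h}$, which identifies $f$ with the restriction of $g$ and completes the embedding.
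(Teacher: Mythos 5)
Your proofs of (i) and (ii) follow the paper's route exactly: (i) is the nesting of the reservoirs from \prettyref{lemma:nesting} plus the (routine) consistency of the extended voice transforms, and (ii) is the reproducing identity of \prettyref{lemma:h_properties} combined with the smoothing property \prettyref{lemma:kernel_property}, with the same implicit restriction to $1<p<r$ that the paper also inherits from that lemma. The genuine divergence is in (iii). The paper disposes of it in one line, ``since $L_{p,w}\subset L_{p,v}$ the assertion holds,'' which only addresses the condition $V_{\frF,\tau}f\in L_{p,v}$ and silently identifies an element of $(\cH_{\tau,w})^\sim$ with one of $(\cH_{\tau,v})^\sim$ even though, as you correctly observe, $v\leq w$ gives $\cH_{\tau,w}\subset\cH_{\tau,v}$ and hence the anti-dual inclusion $(\cH_{\tau,v})^\sim\subset(\cH_{\tau,w})^\sim$ runs in the opposite direction. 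Your detour through the reproducing-kernel subspace --- take $F=V_{\frF,\tau}f\in L_{p,w}\subset L_{p,v}$ with $R_\frF F=F$, invoke \prettyref{proposition:co_properties}~(i) for the weight $v$ to manufacture $g\in\mathrm{Co}_{\frF,\tau}(L_{p,v})$ with $V_{\frF,\tau}g=F$, and then use the adjoint representation $V_{\frF,\tau}^\ast F=\int_X F(x)\psi_x\,\mathrm{d}\mu(x)$ together with injectivity of the voice transform to check that $g$ restricts to $f$ on $\cH_{\tau,w}$ --- supplies exactly the extension step the paper omits, at the modest cost of the extra standing hypothesis $R_\frF\in\cA_{q,m_w}$ for all $q>1$ (needed anyway for $\mathrm{Co}_{\frF,\tau}(L_{p,w})$ to be well defined) and the requirement $1<p<\tau'$ so that \prettyref{remark:assumption_fulfilled} validates \prettyref{proposition:co_properties} for both weights. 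In short: same skeleton, but your treatment of (iii) is more complete than the paper's and makes the sense in which the inclusion is ``natural'' precise.
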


\begin{proof}
(i) This follows immediately from \prettyref{lemma:nesting}.\\
(ii) Assume $f\in\mathrm{Co}_{\frF,\tau}(L_{p,v})$, meaning $f\in(\cH_{\tau,v})^\sim$ with $V_{\frF,\tau}f\in L_{p,v}$.
By \prettyref{lemma:h_properties} (ii) the reproducing identity extends to $(\cH_{\tau,v})^\sim$, thus $V_{\frF,\tau}f=R_\frF(V_{\frF,\tau}f)\in R_\frF(L_{p,v})$.
With \prettyref{lemma:kernel_property} we derive $V_{\frF,\tau}f\in L_{r,v}$, which shows the assumption.\\
(iii) Since $L_{p,w}\subset L_{p,v}$ the assertion holds.
\end{proof}

\begin{remark}\label{remark:embeddings}
Under the additional assumption $R_\frF\in\cA_{1,m_v}$, the spaces $\mathrm{Co}_{\frF,1}(L_{p,v})$, which are analyzed in~Ref.~\cite{FR:InhCoorbit}, are well-defined by Schur's test, see~\prettyref{lemma:schurs_test}. Hence, by \prettyref{lemma:embeddings} (i) we have the embeddings $\mathrm{Co}_{\frF,\tau}(L_{p,v})\subset\mathrm{Co}_{\frF,1}(L_{p,v})$ for all $1<\tau\leq 2$. This is not applicable for the inhomogeneous shearlet coorbit spaces we are looking at in this paper but may be of interest for other spaces.
\end{remark}

To identify conditions under which the Coorbit spaces are independent of the frame, we introduce a second Parseval frame for $\cH$ we denote by $\frG=\{\tilde\psi_x\}_{x\in X}$ and introduce the Gramian kernel as
\begin{align*}
G(\frF,\frG)(x,y):=\ip{\tilde\psi_y}{\psi_x}.
\end{align*}
Then, the following holds true.

\begin{proposition}\label{proposition:equality_frf}
Assume that $\frF=\{\psi_x\}_{x\in X}$ and $\frG=\{\tilde\psi_x\}_{x\in X}$ are two Parseval frames for $\cH$ fulfilling all necessary conditions on the reproducing kernels and the corresponding Gramian kernel fulfills $G(\frF,\frG)\in\cA_{1,m_v}$.
Then it holds $\mathrm{Co}_{\frF,\tau}(L_{p,v})=\mathrm{Co}_{\frG,\tau}(L_{p,v})$.
\end{proposition}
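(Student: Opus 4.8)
The plan is to exploit the \emph{Gramian kernel} $G(\frF,\frG)$ as an intertwining operator between the two voice transforms. The central identity I would establish is
\[ V_{\frF,\tau} f = G(\frF,\frG)\bigl(V_{\frG,\tau} f\bigr), \]
valid for every $f$ in the common reservoir of distributions. Once this is in hand, the hypothesis $G(\frF,\frG)\in\cA_{1,m_v}$ together with Schur's test (\prettyref{lemma:schurs_test}; cf.\ \prettyref{remark:kernel_property}) immediately transfers membership in $L_{p,v}$ from $V_{\frG,\tau}f$ to $V_{\frF,\tau}f$. Since $G(\frG,\frF)(x,y)=\overline{G(\frF,\frG)(y,x)}$ and $m_v$ is symmetric, the Gramian kernel of the reversed pair also lies in $\cA_{1,m_v}$ with the same norm, so the two frames play symmetric roles and it suffices to prove a single inclusion, say $\mathrm{Co}_{\frG,\tau}(L_{p,v})\subseteq\mathrm{Co}_{\frF,\tau}(L_{p,v})$.

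First I would verify the identity on $\cH$. For $f\in\cH$ the Parseval reconstruction associated with $\frG$ gives $f=\int_X\ip{f}{\tilde\psi_y}\tilde\psi_y\,\mathrm{d}\mu(y)$ in the weak sense, whence
\begin{align*}
G(\frF,\frG)\bigl(V_{\frG}f\bigr)(x) &=\int_X\ip{\tilde\psi_y}{\psi_x}\,\ip{f}{\tilde\psi_y}\,\mathrm{d}\mu(y) \\
&=\ipBig{\int_X\ip{f}{\tilde\psi_y}\tilde\psi_y\,\mathrm{d}\mu(y)}{\psi_x}=\ip{f}{\psi_x}=V_\frF f(x).
\end{align*}
Applying this to $f\in\cH$ and using that $G(\frF,\frG)\in\cA_{1,m_v}$ acts boundedly on $L_{\tau,v}$ shows $\norm{V_\frF f}{L_{\tau,v}}\lesssim\norm{V_\frG f}{L_{\tau,v}}$ and, by symmetry, the reverse estimate; hence the test function spaces $\cH_{\tau,v}$ built from $\frF$ and from $\frG$ coincide with equivalent norms. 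Consequently their anti-duals coincide, so both coorbit spaces are defined over the same reservoir $(\cH_{\tau,v})^\sim$ and the asserted equality is meaningful.

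To extend the intertwining identity to a distribution $f\in(\cH_{\tau,v})^\sim$, I would write $\psi_x=\int_X\ip{\psi_x}{\tilde\psi_y}\tilde\psi_y\,\mathrm{d}\mu(y)$, interpret $V_{\frF,\tau}f(x)=\ip{f}{\psi_x}$ in the Gelfand-triple sense, and pull the continuous conjugate-linear functional $f$ inside the integral to obtain
\[ V_{\frF,\tau}f(x)=\int_X\ip{\tilde\psi_y}{\psi_x}\,\ip{f}{\tilde\psi_y}\,\mathrm{d}\mu(y)=G(\frF,\frG)\bigl(V_{\frG,\tau}f\bigr)(x). \]
With the identity established, the proof concludes as follows: if $f\in\mathrm{Co}_{\frG,\tau}(L_{p,v})$ then $V_{\frG,\tau}f\in L_{p,v}$, and since $G(\frF,\frG)\in\cA_{1,m_v}$ maps $L_{p,v}$ boundedly into itself by Schur's test, $V_{\frF,\tau}f=G(\frF,\frG)(V_{\frG,\tau}f)\in L_{p,v}$, i.e.\ $f\in\mathrm{Co}_{\frF,\tau}(L_{p,v})$; the reverse inclusion follows by the symmetry already noted.

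The main obstacle I anticipate is the last extension step: justifying that the $\frG$-reconstruction of $\psi_x$ converges in the \emph{stronger} norm of $\cH_{\tau,v}$ (not merely in $\cH=L_2$) and that the continuous functional $f$ may be exchanged with the vector-valued integral. I would handle this by truncating the reconstruction integral over an exhausting sequence of compact sets, using $\psi_x\in\cH_{\tau,v}$ (\prettyref{lemma:frame_subset}) together with the $\cA_{1,m_v}$-bound on $G(\frF,\frG)$ to control the tails in $L_{\tau,v}$, and then appealing to dominated convergence exactly as in the proof of \prettyref{lemma:h_properties}, so that the pairing passes to the limit.
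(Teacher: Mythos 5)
Your proposal follows essentially the same route as the paper's proof: establish the intertwining identity $V_{\frF,\tau}f = G(\frF,\frG)(V_{\frG,\tau}f)$ by expanding $\psi_x$ in the frame $\frG$, and then transfer $L_{p,v}$-membership via Schur's test using $G(\frF,\frG)\in\cA_{1,m_v}$, with the reverse inclusion by symmetry. The paper simply asserts that the identity ``holds for the extended voice transform,'' whereas you supply the justification (coincidence of the reservoirs and the truncation/dominated-convergence argument for distributions), which is a welcome but not structurally different elaboration.
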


\begin{proof}
By expanding $V_\frF$ with respect to $\frG$ we obtain
\begin{align*}
V_\frF f(x) = \ip{f}{\psi_x} = \int_X\ip{f}{\tilde\psi_y}\ip{\tilde\psi_y}{\psi_x}\,\mathrm{d}\mu(y) = G(\frF,\frG)(V_\frG f)(x)
\end{align*}
and the same holds for the extended voice transform.
By our assumption we derive with Schur's test that $G(\frF,\frG)(L_{p,v})\subset L_{p,v}$ and it holds
\begin{align*}
\norm{f}{\mathrm{Co}_{\frF,\tau}(L_{p,v})} \leq \norm{G(\frF,\frG)}{\cA_{1,m_v}}\,\norm{f}{\mathrm{Co}_{\frG,\tau}(L_{p,v})}.
\end{align*}
The converse is shown analogously and the assertion follows.
\end{proof}

\section{Shearlet coorbit spaces}\label{sec:shearletcoorbit}

In this section we introduce an inhomogeneous version of the shearlet transform and define smoothness spaces associated to this transform.
In order to accomplish this we use the generalized coorbit theory outlined in \prettyref{sec:coorbit_theory}.
Since our approach is based on the homogeneous shearlet transform and the resulting coorbit spaces (as treated in~Ref.~\cite{Dahlke:TracesEmbeddings,ShearletCoorbits,Dahlke:ShearletBanachFrames,Dahlke:ShearletArbitrary}), we start by giving a short overview of the respective theory.
By modifying the homogeneous shearlet transform, we then develop a new transform, given through the action of an (inhomogeneous) frame.
For this new transform we then show that all the necessary conditions on the reproducing kernel hold, so that we can introduce the associated coorbit spaces with respect to the (weighted) Lebesgue spaces.

\subsection{Homogeneous shearlet transform}\label{sec:homogeneousshearlettransform}
To define the shearlet transform, one starts with an \emph{admissible} function $\psi \in L_2(\R^d)$, i.e. a function satisfying the condition
\begin{equation}\label{eq:admissibility}
c_\psi := \int_{\R^d} \frac{\abs{\hat{\psi}(\omega)}^2}{\abs{\omega_1}^d}\,\mathrm{d}\omega < \infty.
\end{equation}
This condition is necessary for the transform to be square-integrable.
The admissible function is then translated, dilated and sheared in order to change its localization, scale and orientation.
For a parameter $a \in \R^*$ let \[ A_a = \begin{pmatrix}
a & 0_{d-1}^T\\0_{d-1} & \sgn(a)\abs{a}^{\frac1d} I_{d-1}
\end{pmatrix} \] denote a generalized parabolic scaling matrix and for a parameter $s \in \R^{d-1}$ let \[ S_s = \begin{pmatrix}
1 & s^T\\0_{d-1} & I_{d-1}
\end{pmatrix} \] denote the so-called shear matrix.
It is easy to see that $\abs{\det S_s} = 1$ and $\abs{\det A_a} = \abs{a}^{2-\frac1d}$.
Using these matrices one can then define the translated, dilated and sheared version of $\psi$ through \[ \psi_{(a,s,t)}(x) = \abs{\det A_a}^{-\frac12} \psi(A_a^{-1} S_s^{-1}(x-t)).\]
In the homogeneous setting, the shearlet transform is then defined through the action of a unitary, irreducible and integrable representation of the full parameter group, the so-called shearlet group $\mathds{S} = \R^* \times \R^{d-1} \times \R^d$ with the group law \[ (a,s,t) \circ (a',s',t') = (aa',s + \abs{a}^{1-\frac1d} s',t + S_s A_a t'). \]
Given the mapping $\pi : \mathds{S} \rightarrow \cU(L_2(\R^d))$ with $\pi(a,s,t)\psi = \psi_{(a,s,t)}$, which can be shown to be a unitary group representation, the shearlet transform is defined as \[ \mathcal{SH}: L_2(\R^d) \rightarrow L_2(\mathds{S}),\quad f \mapsto \mathcal{SH}f \] with \[ \mathcal{SH}f: \mathds{S} \rightarrow \C,\quad (a,s,t) \mapsto \ip{f}{\pi(a,s,t)\psi}_{L_2(\R^d)}. \]
Based on this notion of the shearlet transform Dahlke et al. introduced homogeneous shearlet coorbit spaces with respect to the Lebesgue spaces by using the coorbit space theory developed by Feichtinger and Gr\"ochenig in Ref.~\cite{FG:Coorbit1,FG:Coorbit2,FG:Coorbit3}.

\subsection{Inhomogeneous shearlet frame}\label{sec:inhFrame}

Similar to the wavelet approach in Ref.~\cite{RauhutUllrich} we now introduce an inhomogeneous shearlet transform by restricting the dilation parameter to a closed subset of the full parameter group, thereby only covering the higher-frequency content of a signal.
To analyze the polynomial and lower-frequency part a second function is introduced to construct an inhomogeneous frame of functions in $L_2(\R^d)$ as the set of building blocks for our new transform.
Therefore we choose the set \[ X := \Bigl( \{\infty\}\times \R^{d-1} \times \R^d \Bigr) \cup \Bigl( [-1,1]^* \times \R^{d-1} \times \R^d \Bigr) \] as the new parameter space with ``$\infty$'' representing an isolated point in $\R$ and $[-1,1]^* := [-1,1]\setminus \{0\}$.
The right-hand side of the union is the aforementioned subspace of the shearlet group $\mathds{S}$, which is closed under the group action.
Obviously, this definition leads to a locally compact Hausdorff space.
In the following definition we introduce a measure on the parameter space so that $X$, together with its Borel $\sigma$-algebra, becomes a measure space.

\begin{definition}\label{definition:measure}
On the space $X$ a measure $\mu$ is defined by
\begin{equation}\label{eq:measure}
\integral{X}{F(x)}{\mu(x)} := \measureX{F(\infty,s,t)}{F(a,s,t)}
\end{equation}
with $F$ being a complex-valued function on $X$ which is measurable with respect to the Borel $\sigma$-algebra.
\end{definition}

The first summand in the definition above is composed of the point measure on $\R$ and the Lebesgue measure on $\R^{d-1} \times \R^d$, while the second summand is the restriction of the (left) Haar measure on the shearlet group to the subset $[-1,1]^*\times \R^{d-1}\times \R^d$.
Therefore it is obvious that $\mu$ given by~\prettyref{eq:measure} is a positive Radon measure.
Choosing the measure space $(X,\mathfrak{B}(X),\mu)$ as the underlying index space, we can introduce a continuous shearlet frame.

\begin{definition}
Let $a \in \R^*$, $s \in \R^{d-1}$ and $t \in \R^d$.
Then
\begin{thmlist}
\item $L_t: L_2(\R^d)\rightarrow L_2(\R^d)$ with $L_t \psi := \psi (\cdot - t)$ is called the \emph{(left) translation operator},
\item $D_{S_s}: L_2(\R^d)\rightarrow L_2(\R^d)$ with $D_{S_s} \psi := \psi (S_s^{-1} \cdot)$ is called the \emph{shearing operator}, and
\item $D_{A_a}: L_2(\R^d)\rightarrow L_2(\R^d)$ with $D_{A_a} \psi := \abs{\det A_a}^{-\frac{1}{2}} \psi(A_a^{-1} \cdot)$ is called the \emph{(anisotropic) dilation operator}.
\end{thmlist}
\end{definition}

Using the above defined operators, we can define an \emph{inhomogeneous shearlet frame}.

\begin{definition}\label{definition:shearletframe}
Let $\Phi, \Psi \in L_2(\R^d)$ with $\Psi$ being an admissible shearlet.
Then we define $\frF := \{ \psi_x \}_{x \in X}$ with
\begin{align}
&\psi_{(\infty,s,t)} := L_t D_{S_s} \Phi = \Phi (S_s^{-1} (\cdot - t))\mbox{ and}\label{eq:generator}\\
&\psi_{(a,s,t)} := L_t D_{S_s} D_{A_a} \Psi = \abs{\det A_a}^{-\frac{1}{2}} \Psi(A_a^{-1} S_s^{-1} (\cdot - t)).\label{eq:shearlet}
\end{align}
\end{definition}

The main theorem of this section is that $\frF$, given by~\prettyref{eq:generator} and~\prettyref{eq:shearlet}, constitutes a continuous Parseval frame under the conditions given in~\prettyref{theorem:tightframe} below so that the transform based on $\frF$ is well defined.
To this end we need two technical results that can also be found in~Ref.~\cite{Dahlke:ShearletArbitrary}.

\begin{lemma}\label{lemma:convolution}
For all $(\alpha,s,t) \in X$ with $\alpha = a$ or $\alpha = \infty$ and $f,\psi \in L_2(\R^d)$ the identity
\begin{equation*}
\ip{f}{\psi_{(\alpha,s,t)}}_{L_2(\R^d)} = (f * \psi^{*}_{(\alpha,s,0)})(t)
\end{equation*}
holds true with $\psi^{*} := \overline{\psi (-\cdot)}$. 
\end{lemma}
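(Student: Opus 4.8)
The plan is to verify the identity by a direct computation, treating the finite-scale case $\alpha = a$ and the coarse-scale case $\alpha = \infty$ in parallel. The latter is simply the dilation-free specialization of the former: one replaces the operator $D_{A_a}$ by the identity and drops the normalizing factor $\abs{\det A_a}^{-1/2}$, since $\psi_{(\infty,s,t)} = L_t D_{S_s}\psi$ carries no anisotropic dilation. For this reason I only spell out the case $\alpha = a$ and note that the argument for $\alpha = \infty$ is identical mutatis mutandis.

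First I would expand the left-hand side using the definition of the $L_2$-inner product together with $\psi_{(a,s,t)} = L_t D_{S_s} D_{A_a}\psi$, giving
\[
\ip{f}{\psi_{(a,s,t)}}_{L_2(\R^d)} = \abs{\det A_a}^{-1/2}\int_{\R^d} f(x)\,\overline{\psi\bigl(A_a^{-1}S_s^{-1}(x-t)\bigr)}\leb{x}.
\]
Next I would unravel the right-hand side. Since the translation at $t=0$ is the identity, one has $\psi^*_{(a,s,0)} = D_{S_s} D_{A_a}\psi^*$ with $\psi^* = \overline{\psi(-\cdot)}$, so that
\[
\psi^*_{(a,s,0)}(y) = \abs{\det A_a}^{-1/2}\,\overline{\psi\bigl(-A_a^{-1}S_s^{-1}y\bigr)}.
\]
Inserting the evaluation of this function at the convolution argument $t-y$ into $(f*\psi^*_{(a,s,0)})(t) = \int_{\R^d} f(y)\,\psi^*_{(a,s,0)}(t-y)\leb{y}$ produces the argument $-A_a^{-1}S_s^{-1}(t-y)$ inside $\overline{\psi}$.

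The single algebraic point to invoke is the linearity of the matrix $A_a^{-1}S_s^{-1}$, which yields
\[
-A_a^{-1}S_s^{-1}(t-y) = A_a^{-1}S_s^{-1}(y-t).
\]
After relabeling the integration variable $y \mapsto x$, the right-hand side becomes exactly the integral already obtained for the left-hand side, and the two expressions coincide. There is no substantial obstacle here: the statement is essentially a change-of-variables and bookkeeping identity, and the only thing to watch is that the complex conjugate and the reflection $y \mapsto -y$ built into $\psi^*$ are carried correctly through the operators $D_{S_s}$, $D_{A_a}$ and the Jacobian normalization $\abs{\det A_a}^{-1/2}$. I would also remark that both sides are well defined as absolutely convergent integrals for $f,\psi \in L_2(\R^d)$ by the Cauchy--Schwarz inequality, so the manipulations are justified pointwise in $t$.
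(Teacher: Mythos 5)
Your computation is correct, and it is the standard direct verification: the paper itself states this lemma without proof, deferring to Ref.~\cite{Dahlke:ShearletArbitrary}, where essentially the same unwinding of the operators and the sign flip $-A_a^{-1}S_s^{-1}(t-y)=A_a^{-1}S_s^{-1}(y-t)$ is carried out. Your remarks that the $\alpha=\infty$ case is the dilation-free specialization (with $\abs{\det S_s}=1$, so no Jacobian factor appears at all) and that both integrals converge absolutely by Cauchy--Schwarz are accurate and complete the argument.
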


\begin{lemma}\label{lemma:fourier}
Let $\phi \in L_2(\R^d)$, $a \in \R^*$, $s \in \R^{d-1}$ and $\xi \in \R^d$.
Then the following equations hold:
\begin{thmlist}
\item\label{list:fouriergenerator} $\cF(D_{S_s} \phi) (\xi) = \hat{\phi}(S^T_s \xi)$;
\item\label{list:fouriershearlet} $\cF(D_{S_s} D_{A_a} \phi) (\xi) = \abs{\det A_a}^{\frac{1}{2}} \hat{\phi}(A_a S^T_s \xi)$.
\end{thmlist}
\end{lemma}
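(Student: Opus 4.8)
The plan is to reduce both identities to a single general transformation rule for the Fourier transform under an invertible linear substitution, and then read off the two cases by inserting the relevant matrices.

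First I would establish the auxiliary fact that for any invertible matrix $M \in \R^{d\times d}$ and every $\phi \in L_2(\R^d)$ one has
\begin{equation*}
\cF\bigl(\phi(M^{-1}\cdot)\bigr)(\xi) = \abs{\det M}\,\hat{\phi}(M^T \xi).
\end{equation*}
For $\phi \in L_1(\R^d)\cap L_2(\R^d)$ this is immediate from the integral definition of $\cF$: writing out $\cF(\phi(M^{-1}\cdot))(\xi) = \int_{\R^d}\phi(M^{-1}x)\,e^{-2\pi i\xi\cdot x}\leb{x}$ and substituting $y=M^{-1}x$, so that $x=My$ and $\leb{x}=\abs{\det M}\leb{y}$, turns the phase into $\xi\cdot(My)=(M^T\xi)\cdot y$ and yields the claim. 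Since $L_1\cap L_2$ is dense in $L_2$ and both sides are continuous on $L_2$ (the composition operator is bounded and $\cF$ is unitary), the identity extends to all $\phi\in L_2(\R^d)$ by density.

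For part~\myref{fouriergenerator} I would apply this rule with $M=S_s$, so that $D_{S_s}\phi=\phi(S_s^{-1}\cdot)$; since $\abs{\det S_s}=1$ the prefactor drops out and I obtain $\cF(D_{S_s}\phi)(\xi)=\hat{\phi}(S_s^T\xi)$. For part~\myref{fouriershearlet} I note that $D_{S_s}D_{A_a}\phi=\abs{\det A_a}^{-1/2}\phi(A_a^{-1}S_s^{-1}\cdot)=\abs{\det A_a}^{-1/2}\phi\bigl((S_sA_a)^{-1}\cdot\bigr)$, and apply the rule with $M=S_sA_a$. Using $\abs{\det(S_sA_a)}=\abs{\det S_s}\,\abs{\det A_a}=\abs{\det A_a}$ together with the fact that $A_a$ is diagonal, hence symmetric, so that $(S_sA_a)^T=A_a^TS_s^T=A_aS_s^T$, I arrive at
\begin{equation*}
\cF(D_{S_s}D_{A_a}\phi)(\xi)=\abs{\det A_a}^{-1/2}\,\abs{\det A_a}\,\hat{\phi}(A_aS_s^T\xi)=\abs{\det A_a}^{\frac12}\,\hat{\phi}(A_aS_s^T\xi),
\end{equation*}
which is the second assertion.

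The computation is entirely routine, so there is no genuine obstacle beyond careful bookkeeping of the transposes and of the determinant and normalization factors. The two points deserving a word of care are that $\phi$ lies in $L_2$ rather than $L_1$, which is handled by carrying out the substitution on the dense subspace $L_1\cap L_2$ and extending by continuity as above, and the symmetry $A_a^T=A_a$ (valid because $A_a$ is diagonal by construction), which is precisely what converts the transpose $(S_sA_a)^T$ into the form $A_aS_s^T$ appearing in the statement.
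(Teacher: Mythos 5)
Your proof is correct. The paper itself does not prove this lemma --- it only cites Ref.~\cite{Dahlke:ShearletArbitrary} for both technical results --- so there is no in-paper argument to compare against; your change-of-variables computation, the density extension from $L_1\cap L_2$ to $L_2$, and the use of $A_a^T=A_a$ to obtain the stated form $A_aS_s^T$ are exactly the standard route and all the bookkeeping (determinants, transposes, normalization) checks out.
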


We now state the main theorem of this section, which identifies conditions on $\Phi$ and $\Psi$ for $\frF$ being a continuous Parseval frame.

\begin{theorem}\label{theorem:tightframe}
Let $\Psi \in L_1(\R^d)\cap L_2(\R^d)$ be an admissible shearlet and let $\Phi \in L_1(\R^d)\cap L_2(\R^d)$ be such that
\begin{equation}\label{eq:assumptions}
\integral{\R^{d-1}}{\frac{\abs{\hat{\Phi}(y,\sigma)}^2}{\abs{y}^{d-1}}}{\sigma} + \integral{\R^{d-1}}{\integral[\abs{y}]{-\abs{y}}{\frac{\abs{\hat{\Psi}(\xi_1,\tilde{\xi})}^2}{\abs{\xi_1}^d}}{\xi_1}}{\tilde{\xi}} = 1\quad\text{for almost every }y\in \R.
\end{equation}
Then the inhomogeneous shearlet frame $\frF$ is a continuous Parseval frame of $L_2(\R^d)$, i.e.,
\begin{equation*}
\integral{X}{\abs{\ip{f}{\psi_x}}^2}{\mu(x)} = \norm{f}{L_2(\R^d)}^2,\quad f\in L_2(\R^d).
\end{equation*}
\end{theorem}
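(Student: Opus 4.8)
The plan is to reduce the Parseval identity to the pointwise normalization \prettyref{eq:assumptions} by passing to the Fourier side. First I would split the integral over $X$ according to the two summands of the measure \prettyref{eq:measure}, writing
\[
\int_X \abs{\ip{f}{\psi_x}}^2\,\mathrm{d}\mu(x) = I_\infty + I_S,
\]
where $I_\infty$ collects the contribution of the points $(\infty,s,t)$ and $I_S$ that of the points $(a,s,t)$ with $a\in[-1,1]^*$. In each term the inner product depends on the translation $t$ only through a convolution: by \prettyref{lemma:convolution} one has $\ip{f}{\psi_{(\alpha,s,t)}} = (f\ast\psi^{*}_{(\alpha,s,0)})(t)$. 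Since $\Phi,\Psi\in L_1(\R^d)\cap L_2(\R^d)$, all sheared and dilated copies remain in $L_1\cap L_2$, so by Young's inequality these convolutions lie in $L_2(\R^d)$ and satisfy $\widehat{f\ast\psi^{*}_{(\alpha,s,0)}} = \hat f\cdot\overline{\widehat{\psi_{(\alpha,s,0)}}}$. Integrating the modulus squared over $t\in\R^d$ and applying Plancherel's theorem therefore replaces the $t$-integral by $\int_{\R^d}\abs{\hat f(\omega)}^2\,\bigl\lvert\widehat{\psi_{(\alpha,s,0)}}(\omega)\bigr\rvert^2\,\mathrm{d}\omega$. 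Using Tonelli to interchange the (nonnegative) integrals and \prettyref{lemma:fourier} to compute the Fourier transforms of the frame elements, I obtain
\[
\int_X \abs{\ip{f}{\psi_x}}^2\,\mathrm{d}\mu(x) = \int_{\R^d}\abs{\hat f(\omega)}^2\,W(\omega)\,\mathrm{d}\omega,
\]
with a weight $W(\omega)$ given by the remaining integrals over the shearing and dilation parameters. Since $\norm{f}{L_2(\R^d)}^2 = \int_{\R^d}\abs{\hat f(\omega)}^2\,\mathrm{d}\omega$ by Plancherel, it suffices to show $W(\omega)=1$ for almost every $\omega$.

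For the generator part, writing $\omega=(\omega_1,\tilde\omega)$ and using $\widehat{\psi_{(\infty,s,0)}}(\omega)=\hat\Phi(S_s^T\omega)=\hat\Phi(\omega_1,\omega_1 s+\tilde\omega)$, the $\infty$-contribution to $W(\omega)$ is $\int_{\R^{d-1}}\abs{\hat\Phi(\omega_1,\omega_1 s+\tilde\omega)}^2\,\mathrm{d}s$. The substitution $\sigma=\omega_1 s+\tilde\omega$ has Jacobian $\abs{\omega_1}^{d-1}$ and produces exactly $\int_{\R^{d-1}}\abs{\hat\Phi(\omega_1,\sigma)}^2\abs{\omega_1}^{-(d-1)}\,\mathrm{d}\sigma$, i.e.\ the first summand of \prettyref{eq:assumptions} with $y=\omega_1$.

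For the shearlet part I use $\widehat{\psi_{(a,s,0)}}(\omega)=\abs{\det A_a}^{1/2}\hat\Psi(A_a S_s^T\omega)$ with $A_a S_s^T\omega=(a\omega_1,\sgn(a)\abs{a}^{1/d}(\omega_1 s+\tilde\omega))$ and $\abs{\det A_a}=\abs{a}^{2-1/d}$. The corresponding contribution to $W(\omega)$ is
\[
\int_{-1}^{1}\!\int_{\R^{d-1}} \abs{a}^{2-\frac1d}\,\bigl\lvert\hat\Psi\bigl(a\omega_1,\sgn(a)\abs{a}^{\frac1d}(\omega_1 s+\tilde\omega)\bigr)\bigr\rvert^2\,\mathrm{d}s\,\frac{\mathrm{d}a}{\abs{a}^{d+1}},
\]
which I would evaluate by two successive substitutions: first $\tilde\xi=\sgn(a)\abs{a}^{1/d}(\omega_1 s+\tilde\omega)$ in the inner integral (Jacobian $\abs{a}^{(d-1)/d}\abs{\omega_1}^{d-1}$), then $\xi_1=a\omega_1$ in the outer integral (Jacobian $\abs{\omega_1}$), under which the range $a\in[-1,1]^*$ becomes $\xi_1\in[-\abs{\omega_1},\abs{\omega_1}]^*$. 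The crux of the whole proof is the bookkeeping of the powers of $\abs{a}$ and $\abs{\omega_1}$: after the first substitution the combined weight $\abs{a}^{2-1/d}\cdot\abs{a}^{-(d+1)}\cdot\abs{a}^{-(d-1)/d}$ reduces to $\abs{a}^{-d}$, and the second substitution turns this into $\abs{\xi_1}^{-d}$ up to a power of $\abs{\omega_1}$, whereupon the $\abs{\omega_1}$ factors from both Jacobians and the prefactor cancel exactly. By Tonelli the result is $\int_{\R^{d-1}}\!\int_{-\abs{\omega_1}}^{\abs{\omega_1}}\abs{\hat\Psi(\xi_1,\tilde\xi)}^2\abs{\xi_1}^{-d}\,\mathrm{d}\xi_1\,\mathrm{d}\tilde\xi$, which is precisely the second summand of \prettyref{eq:assumptions} with $y=\omega_1$.

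Adding the two contributions shows that $W(\omega)$ equals the left-hand side of \prettyref{eq:assumptions}, which by hypothesis equals $1$ for almost every $\omega_1$, and hence for almost every $\omega\in\R^d$ since $W$ depends on $\omega$ only through $\omega_1$. Therefore $\int_X\abs{\ip{f}{\psi_x}}^2\,\mathrm{d}\mu(x)=\int_{\R^d}\abs{\hat f(\omega)}^2\,\mathrm{d}\omega=\norm{f}{L_2(\R^d)}^2$, which is the claimed Parseval identity. The only genuinely delicate point is the exponent computation in the shearlet part: the weight $\abs{a}^{-(d+1)}$ in the Haar measure is exactly the one that makes $W$ independent of $\omega$ and reproduces \prettyref{eq:assumptions}; everything else is a routine application of Plancherel, Tonelli, and \prettyref{lemma:convolution} together with \prettyref{lemma:fourier}.
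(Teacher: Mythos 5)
Your proposal is correct and follows essentially the same route as the paper: split the measure into the generator and shearlet parts, use \prettyref{lemma:convolution} and Plancherel to move to the Fourier side, reduce to showing the weight $W(\omega)$ equals $1$ a.e., and verify this via \prettyref{lemma:fourier} and the change of variables that turns the Haar weight $\abs{a}^{-(d+1)}$ into the normalization \prettyref{eq:assumptions}. The only cosmetic difference is that you perform the shearlet-part substitution in two successive steps where the paper substitutes $\xi=(at_1,\sgn(a)\abs{a}^{1/d}(\tilde t+t_1 s))$ at once; your exponent bookkeeping checks out.
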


\begin{proof}
Applying~\prettyref{eq:measure}, Fubini's and Plancherel's theorem we obtain
\begin{align*}
\integral{X}{\abs{\ip{f}{\psi_x}}^2}{\mu(x)} &= \integral{\R^d}{\integral{\R^{d-1}}{\abs{\ip{f}{\psi_{(\infty,s,t)}}}^2}{s}}{t}\\
&\qquad\qquad + \integral{\R^d}{\integral{\R^{d-1}}{\int\limits^1_{-1} \abs{\ip{f}{\psi_{(a,s,t)}}}^2\,\frac{\mathrm{d}a}{\abs{a}^{d+1}}}{s}}{t}\\
&= \integral{\R^{d-1}}{\integral{\R^d}{\abs{\ip{f}{\psi_{(\infty,s,t)}}}^2}{t}}{s}\\
&\qquad\qquad + \integral{\R^{d-1}}{\int\limits^{1}_{-1} \integral{\R^d}{\abs{\ip{f}{\psi_{(a,s,t)}}}^2}{t}\,\frac{\mathrm{d}a}{\abs{a}^{d+1}}}{s}\\
&= \integral{\R^{d-1}}{\norm{\ip{f}{\psi_{(\infty,s,\cdot)}}}{L_2(\R^d)}^2}{s}\\
&\qquad\qquad + \integral{\R^{d-1}}{\int\limits^1_{-1} \norm{\ip{f}{\psi_{(a,s,\cdot)}}}{L_2(\R^d)}^2\,\frac{\mathrm{d}a}{\abs{a}^{d+1}}}{s}\\
&= \integral{\R^{d-1}}{\norm{\cF(\ip{f}{\psi_{(\infty,s,\cdot)}})}{L_2(\R^d)}^2}{s}\\
&\qquad\qquad + \integral{\R^{d-1}}{\int\limits^1_{-1} \norm{\cF(\ip{f}{\psi_{(a,s,\cdot)}})}{L_2(\R^d)}^2\,\frac{\mathrm{d}a}{\abs{a}^{d+1}}}{s}\\
&= \integral{\R^{d-1}}{\!\integral{\R^d}{\abs{\cF(\ip{f}{\psi_{(\infty,s,\cdot)}})(t)}^2}{t}}{s}\\
&\qquad\qquad + \integral{\R^{d-1}}{\int\limits^{1}_{-1} \integral{\R^d}{\abs{\cF(\ip{f}{\psi_{(a,s,\cdot)}})(t)}^2}{t}\,\frac{\mathrm{d}a}{\abs{a}^{d+1}}}{s}.
\end{align*}
Using~\prettyref{lemma:convolution}, Fubini's theorem, the fact that $\cF(f * g) = \hat{f} \hat{g}$ and $\abs{\cF(f^*)} = \abs{\cF(f)}$ leads to
\begin{align*}
\lefteqn{\integral{X}{\abs{\ip{f}{\psi_x}}^2}{\mu(x)} = \integral{\R^{d-1}}{\integral{\R^d}{\abs{\cF(f * \psi^{*}_{(\infty,s,0)})(t)}^2}{t}}{s}}\\
&\qquad\qquad\qquad\qquad\qquad\qquad + \integral{\R^{d-1}}{\int\limits^1_{-1} \integral{\R^d}{\abs{\cF(f * \psi^{*}_{(a,s,0)})(t)}^2}{t}\,\frac{\mathrm{d}a}{\abs{a}^{d+1}}}{s}\\
&= \integral{\R^{d-1}}{\integral{\R^d}{\abs{\hat{f}(t)}^2 \abs{\cF(\psi^{*}_{(\infty,s,0)})(t)}^2}{t}}{s}\\
&\qquad\qquad\qquad\qquad + \integral{\R^{d-1}}{\int\limits^1_{-1} \integral{\R^d}{\abs{\hat{f}(t)}^2 \abs{\cF(\psi^{*}_{(a,s,0)})(t)}^2}{t}\,\frac{\mathrm{d}a}{\abs{a}^{d+1}}}{s}\\
&= \integral{\R^d}{\abs{\hat{f}(t)}^2 \biggl( \integral{\R^{d-1}}{\abs{\cF(\psi_{(\infty,s,0)}) (t)}^2}{s} + \integral{\R^{d-1}}{\int\limits^1_{-1} \abs{\cF(\psi_{(a,s,0)})(t)}^2\,\frac{\mathrm{d}a}{\abs{a}^{d+1}}}{s} \biggr)}{t}.
\end{align*}
Thus, if we can prove that
\begin{equation}\label{eq:condtightframe}
\integral{\R^{d-1}}{\abs{\cF(\psi_{(\infty,s,0)}) (t)}^2}{s} + \integral{\R^{d-1}}{\int\limits^1_{-1} \abs{\cF(\psi_{(a,s,0)})(t)}^2\,\frac{\mathrm{d}a}{\abs{a}^{d+1}}}{s} \stackrel{!}{=} 1
\end{equation}
for almost every $t\in\R^d$,the assertion follows, since then
\begin{equation*}
\integral{X}{\abs{\ip{f}{\psi_x}}^2}{\mu(x)} = \integral{\R^d}{\abs{\hat{f}(t)}^2}{t} = \norm{\hat{f}}{L_2(\R^d)}^2 = \norm{f}{L_2(\R^d)}^2.
\end{equation*}
Hence, it remains to show~\prettyref{eq:condtightframe}.
Assuming that $t_1 \neq 0$ we use~\prettyref{lemma:fourier} to obtain
\begin{align*}
\lefteqn{\integral{\R^{d-1}}{\abs{\cF(\psi_{(\infty,s,0)}) (t)}^2}{s} + \integral{\R^{d-1}}{\int\limits^1_{-1} \abs{\cF(\psi_{(a,s,0)})(t)}^2\,\frac{\mathrm{d}a}{\abs{a}^{d+1}}}{s}}\\
&= \integral{\R^{d-1}}{\abs{\cF(D_{S_s} \Phi) (t)}^2}{s} + \integral{\R^{d-1}}{\int\limits^1_{-1} \abs{\cF(D_{S_s} D_{A_a} \Psi) (t)}^2\,\frac{\mathrm{d}a}{\abs{a}^{d+1}}}{s}\\
&= \integral{\R^{d-1}}{\abs{\hat{\Phi}(S^T_s t)}^2}{s} + \integral{\R^{d-1}}{\int\limits^1_{-1} \abs{\det A_a} \abs{\hat{\Psi}(A_a S_s^T t)}^2\,\frac{\mathrm{d}a}{\abs{a}^{d+1}}}{s}\\
&= \integral{\R^{d-1}}{\abs{\hat{\Phi}(t_1,\tilde{t} + t_1 s)}^2}{s} + \integral{\R^{d-1}}{\int\limits^1_{-1} \abs{\det A_a} \abs{\hat{\Psi}(a t_1,\sgn(a) \abs{a}^{\frac{1}{d}} (\tilde{t} + t_1 s))}^2\,\frac{\mathrm{d}a}{\abs{a}^{d+1}}}{s},
\end{align*}
with $t = (t_1,\tilde{t})^T$, $\tilde{t} \in \R^{d-1}$.
Substituting $ \sigma := \tilde{t} + t_1 s \mbox{ and } \xi = (\xi_1,\tilde{\xi}) := (a t_1,\sgn(a) \abs{a}^{\frac{1}{d}} (\tilde{t} + t_1 s)),$ we end up with
\begin{align*}
\lefteqn{\integral{\R^{d-1}}{\abs{\cF(\psi_{(\infty,s,0)}) (t)}^2}{s} + \integral{\R^{d-1}}{\int\limits^1_{-1} \abs{\cF(\psi_{(a,s,0)})(t)}^2\,\frac{\mathrm{d}a}{\abs{a}^{d+1}}}{s}}\\
&\hspace{2cm}= \integral{\R^{d-1}}{\abs{t_1}^{-(d-1)} \abs{\hat{\Phi}(t_1,\sigma)}^2}{\sigma} + \integral{\R^{d-1}}{\integral[\abs{t_1}]{-\abs{t_1}}{\abs{\xi_1}^{-d} \abs{\hat{\Psi}(\xi_1,\tilde{\xi})}^2}{\xi_1}}{\tilde{\xi}}\\
&\hspace{2cm}= \integral{\R^{d-1}}{\frac{\abs{\hat{\Phi}(t_1,\sigma)}^2}{\abs{t_1}^{d-1}}}{\sigma} + \integral{\R^{d-1}}{\integral[\abs{t_1}]{-\abs{t_1}}{\frac{\abs{\hat{\Psi}(\xi_1,\tilde{\xi})}^2}{\abs{\xi_1}^d}}{\xi_1}}{\tilde{\xi}},
\end{align*}
and~\prettyref{eq:condtightframe} follows from assumption~\prettyref{eq:assumptions}.
\end{proof}

\begin{remark}
The proof of~\prettyref{theorem:tightframe} can also be stated in a similar manner for the case of a tight frame with arbitrary frame constant $A < \infty$.
The only difference is that $\Phi$ and $\Psi$ have to satisfy
\begin{equation*}
\integral{\R^{d-1}}{\frac{\abs{\hat{\Phi}(y,\sigma)}^2}{\abs{y}^{d-1}}}{\sigma} + \integral{\R^{d-1}}{\!\integral[\abs{y}]{-\abs{y}}{\frac{\abs{\hat{\Psi}(\xi_1,\tilde{\xi})}^2}{\abs{\xi_1}^d}}{\xi_1}}{\tilde{\xi}} = A\quad\text{for almost every }y\in \R
\end{equation*}
instead of~\prettyref{eq:assumptions}.
\end{remark}

\begin{remark}\label{remark:choicePhi}
For a given shearlet $\Psi$ it is still necessary to show that one can satisfy condition~\prettyref{eq:assumptions} for a function $\Phi \in L_1(\R^d) \cap L_2(\R^d)$.
To this end we restrict ourselves to odd dimensions and we define $\hat\Phi:\R^d\to\C$ by
\begin{equation*}
\hat\Phi(\xi) := \xi_1^{\frac{d-1}{2}}\biggl(\integral{\R\setminus[-\abs{\xi_1},\abs{\xi_1}]}{\frac{\abs{\hat\Psi(\omega_1,\tilde\xi)}^2}{\abs{\omega_1}^d}}{\omega_1}\biggr)^{1/2}.
\end{equation*}
It is straightforward to see that $\Phi$ fulfills \prettyref{eq:assumptions}. Moreover, $\Phi\in L_2(\R^d)$ is immediate and $\Phi\in L_1(\R^d)$ can be shown if $\hat\Phi\in\mathscr{C}^\infty_0(\R^d)$, see \prettyref{example:psiphi}.
\end{remark}

Because of~\prettyref{theorem:tightframe}, we can now state the definition of the shearlet transform based on $\frF$.

\begin{definition}
Let $\Phi,\Psi \in L_2(\R^d)$ satisfy the assumptions of~\prettyref{theorem:tightframe} and let $\frF = \{ \psi_x \}_{x\in X}$ be given by~\prettyref{definition:shearletframe}.
Then the {\em shearlet transform} based on $\frF$ is defined as \[ \mathcal{SH}_\frF: L_2(\R^d) \rightarrow L_2(X,\mu), f \mapsto \mathcal{SH}_\frF f \] with \[ \mathcal{SH}_\frF f: X \rightarrow \C, x \mapsto \ip{f}{\psi_x}. \]
\end{definition}

\subsection{Conditions on the reproducing kernel}\label{sec:conditions}

The main goal of this section is to lay the foundations for the definition of the coorbit spaces $\mathrm{Co}_{\frF,\tau}(L_{p,v}(X,\mu))$, $1\leq p<\infty$, $p<\tau'<\infty$, with $v$ being a weight function on $X$, associated to the inhomogeneous shearlet transform introduced in the previous section.
To prove that these spaces are well-defined Banach spaces, we need to show that the conditions on $\frF$, as stated in~\prettyref{sec:coorbit_theory}, are satisfied.
By \prettyref{remark:assumption_fulfilled} it suffices to show that $R_\frF\in\cA_{q,m_v}$ for all $q>1$.
To this end we need the following auxiliary results.

\begin{lemma}
Let $a,a' \in [-1,1]^*,\ s,s' \in \R^{d-1},\ t,t' \in \R^d$ and $\varphi_{(a,s,t)} := \abs{\det A_a}^{-\frac12} \Phi(A_a^{-1} S_s^{-1} (\cdot - t))$.
It follows that
\begin{equation}\label{eq:IP1}
\abs{\ip{\psi_{(\infty,s,t)}}{\psi_{(\infty,s',t')}}} = \abs{(\mathcal{SH} \Phi)(\infty,s-s',S_{s'}^{-1}(t-t'))},
\end{equation}
\begin{equation}\label{eq:IP2}
\abs{\ip{\psi_{(\infty,s,t)}}{\psi_{(a',s',t')}}} = \abs{\ip{\Psi}{\varphi_{(a'^{-1},\abs{a'}^{\frac1d-1}(s-s'),A_{a'}^{-1} S_{s'}^{-1}(t-t')}}},
\end{equation}
\begin{equation}\label{eq:IP3}
\abs{\ip{\psi_{(a,s,t)}}{\psi_{(\infty,s',t')}}} = \abs{(\mathcal{SH} \Phi)(a,s-s',S_{s'}^{-1}(t-t'))},
\end{equation}
\begin{equation}\label{eq:IP4}
\abs{\ip{\psi_{(a,s,t)}}{\psi_{(a',s',t')}}} = \abs{(\mathcal{SH} \Psi)(aa'^{-1},\abs{a'}^{\frac1d-1}(s-s'),A_{a'}^{-1} S_{s'}^{-1} (t-t'))}.
\end{equation}
\end{lemma}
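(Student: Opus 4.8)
The plan is to derive all four identities from a single mechanism: the operators $L_t$, $D_{S_s}$ and $D_{A_a}$ are unitary on $L_2(\R^d)$ (the first two since translations are unitary and $\abs{\det S_s}=1$, the third because of its $\abs{\det A_a}^{-1/2}$ normalisation). Hence in each inner product I may transport the whole operator string sitting on the right-hand argument onto the left-hand argument by applying its inverse. Writing $\psi_{(\infty,s,t)} = L_t D_{S_s}\Phi$ and $\psi_{(a,s,t)} = L_t D_{S_s}D_{A_a}\Psi$, the first step in, say, \eqref{eq:IP4} is
\[
\ip{\psi_{(a,s,t)}}{\psi_{(a',s',t')}} = \ip{(L_{t'}D_{S_{s'}}D_{A_{a'}})^{-1}L_t D_{S_s}D_{A_a}\Psi}{\Psi},
\]
after which I would collapse the operator string in the left slot to a single shearlet (or, in \eqref{eq:IP2}, a single $\varphi$) by repeatedly applying the composition and intertwining relations below, and finally take absolute values.

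The algebraic identities I would record once and reuse are
\[
L_tL_{t'}=L_{t+t'},\quad D_{S_s}D_{S_{s'}}=D_{S_{s+s'}},\quad D_{A_a}D_{A_{a'}}=D_{A_{aa'}},
\]
together with the commutation rules
\[
D_{S_s}L_t=L_{S_s t}D_{S_s},\qquad D_{A_a}L_t=L_{A_a t}D_{A_a},\qquad D_{A_a}D_{S_s}=D_{S_{\abs{a}^{1-1/d}s}}D_{A_a}.
\]
Only the last rule carries geometric content: it is exactly the matrix relation $A_aS_s=S_{\abs{a}^{1-1/d}s}A_a$, which in turn encodes the shearlet group law. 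With these in hand, \eqref{eq:IP1} and \eqref{eq:IP4} are pure covariance statements. In \eqref{eq:IP1} only shears and translations occur, so the computation lives in the shear--translation subgroup and produces the parameters $(s-s',\,S_{s'}^{-1}(t-t'))$. In \eqref{eq:IP4} both arguments carry a full dilation by the \emph{same} generator $\Psi$, so the inverse dilation $D_{A_{a'}}^{-1}=D_{A_{a'^{-1}}}$ combines with $D_{A_a}$ to give the dilation parameter $aa'^{-1}$, while commuting it past the shear and the translation yields $\abs{a'}^{1/d-1}(s-s')$ and $A_{a'}^{-1}S_{s'}^{-1}(t-t')$.

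The genuinely new work lies in the mixed identities \eqref{eq:IP2} and \eqref{eq:IP3}, where the two arguments are built from different generators and only one of them is dilated, so no single group covariance applies. For \eqref{eq:IP3} I would transport the undilated string $L_{t'}D_{S_{s'}}$ of $\psi_{(\infty,s',t')}$ onto $\psi_{(a,s,t)}$; the dilation $D_{A_a}$ is untouched, so the parameter $a$ survives unchanged and I land on $\ip{\psi_{(a,s-s',S_{s'}^{-1}(t-t'))}}{\Phi}$, i.e.\ $\mathcal{SH}\Phi$ evaluated at that point. The delicate case is \eqref{eq:IP2}: transporting the fully dilated string $L_{t'}D_{S_{s'}}D_{A_{a'}}$ off $\psi_{(a',s',t')}$ introduces $D_{A_{a'}}^{-1}=D_{A_{a'^{-1}}}$, which must then be commuted inward past the accumulated shear and translation. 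Commuting it past $D_{S_{s-s'}}$ produces the exponent $\abs{a'}^{1/d-1}$ on the shear, commuting it past the translation turns $S_{s'}^{-1}(t-t')$ into $A_{a'}^{-1}S_{s'}^{-1}(t-t')$, and crucially the surviving dilation now acts on the generator $\Phi$, so the resulting object is $\varphi_{(a'^{-1},\,\abs{a'}^{1/d-1}(s-s'),\,A_{a'}^{-1}S_{s'}^{-1}(t-t'))}$ rather than a shearlet.

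Taking absolute values at the end (which lets me freely swap the two slots and discard the immaterial complex conjugation) then yields the stated right-hand sides in all four cases. I expect the only real pitfall to be the bookkeeping in \eqref{eq:IP2}: keeping track of which power of $\abs{a'}$ and which of $A_{a'}$, $A_{a'}^{-1}$ appears after each commutation, and verifying that the generator ends up dilated so that $\varphi$, not $\psi$, is what emerges. Everything else is a mechanical application of the six relations above.
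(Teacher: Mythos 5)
Your proof is correct and is in substance the same computation as the paper's: the change of variables $y=A_{a'}^{-1}S_{s'}^{-1}(x-t')$ that the paper performs inside the integral is exactly the transport of the inverse unitary $(L_{t'}D_{S_{s'}}D_{A_{a'}})^{-1}$ onto the other slot, and both arguments ultimately rest on the same matrix identity $A_aS_s=S_{\abs{a}^{1-1/d}s}A_a$. Your operator-algebraic bookkeeping (all six relations check out, including $\abs{a'^{-1}}^{1-1/d}=\abs{a'}^{1/d-1}$ in \eqref{eq:IP2}) merely packages the four cases more uniformly than the paper, which works out \eqref{eq:IP4} explicitly and declares the rest analogous.
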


\begin{proof}
We only state the proof for~\prettyref{eq:IP4} in detail, \prettyref{eq:IP1}--\prettyref{eq:IP3} can be proven analogously.
By the definition of $\psi_{(a,s,t)}$ we obtain
\begin{align*}
\lefteqn{\ip{\psi_{(a,s,t)}}{\psi_{(a',s',t')}} = \integral{\R^d}{\psi_{(a,s,t)}(x) \overline{\psi_{(a',s',t')}(x)}}{x}}\\
&\qquad= \integral{\R^d}{\abs{\det A_a}^{-\frac12}\Psi(A_a^{-1} S_s^{-1}(x - t)) \overline{\abs{\det A_{a'}}^{-\frac{1}{2}} \Psi(A_{a'}^{-1} S_{s'}^{-1}(x - t'))}}{x},
\end{align*}
which, by means of the substitution $y = A_{a'}^{-1} S_{s'}^{-1}(x-t')$, leads to
\begin{align*}
\lefteqn{\ip{\psi_{(a,s,t)}}{\psi_{(a',s',t')}} = \integral{\R^d}{\abs{\det A_{aa'^{-1}}}^{-\frac12} \Psi(A_a^{-1} S_s^{-1} (S_{s'} A_{a'} y + t' - t)) \overline{\Psi(y)}}{y}}\\
&\qquad= \integral{\R^d}{\abs{\det A_{aa'^{-1}}}^{-\frac12} \Psi(A_a^{-1} S_s^{-1} S_{s'} A_{a'}(y - (A_{a'}^{-1} S_{s'}^{-1} (t-t')))) \overline{\Psi(y)}}{y}\\
&\qquad= \integral{\R^d}{\abs{\det A_{aa'^{-1}}}^{-\frac12} \Psi(A_{aa'^{-1}}^{-1} S_{\abs{a'}^{\frac1d-1} (s-s')}^{-1}(y - (A_{a'}^{-1} S_{s'}^{-1} (t-t')))) \overline{\Psi(y)}}{y}\\
&\qquad= \ip{\psi_{(aa'^{-1},\abs{a'}^{\frac1d-1}(s-s'),A_{a'}^{-1} S_{s'}^{-1} (t-t'))}}{\Psi}.
\end{align*}
This yields
\begin{align*}
\abs{\ip{\psi_{(a,s,t)}}{\psi_{(a',s',t')}}} &= \abs{\ip{\psi_{(aa'^{-1},\abs{a'}^{\frac1d-1}(s-s'),A_{a'}^{-1} S_{s'}^{-1} (t-t'))}}{\Psi}}\\
&= \abs{\ip{\Psi}{\psi_{(aa'^{-1},\abs{a'}^{\frac1d-1}(s-s'),A_{a'}^{-1} S_{s'}^{-1} (t-t'))}}}\\
&= \abs{(\mathcal{SH} \Psi)(aa'^{-1},\abs{a'}^{\frac1d-1}(s-s'),A_{a'}^{-1} S_{s'}^{-1} (t-t'))}.
\end{align*}
\end{proof}

Using the auxiliary result above, we can prove the following lemma concerning the $\cA_{q,m_v}$-Norm of $R_\frF$.

\begin{lemma}\label{lemma:kernel}
Let $R_\frF$ be the kernel function associated to the inhomogeneous shearlet frame as defined by~\prettyref{eq:kernel}.
Then for every $q$ the following identity holds:
\begin{align} \label{eq:kernel}
\lefteqn{\esssup_{(\alpha,\sigma,\tau) \in X} \integral{X}{\abs{R_\frF ((\alpha,\sigma,\tau),(a,s,t))}^q m_v((\alpha,\sigma,\tau),(a,s,t))^q}{\mu (a,s,t)}} \notag\\
&= \max \Biggl\{ \esssup_{(\sigma,\tau)\in \R^{d-1}\times \R^d} \int\limits_{\R^d}\int\limits_{\R^{d-1}}\biggl( \max\left\{\frac{v(\infty,\sigma,\tau)}{v(\infty,\tilde{\sigma_1},\tilde{\tau_1})},\frac{v(\infty,\tilde{\sigma_1},\tilde{\tau_1})}{v(\infty,\sigma,\tau)}\right\}^q \abs{\ip{\Phi}{\psi_{(\infty,s',t')}}}^q \notag\\
&\hspace{1cm}+ \int\limits_{-1}^{1} \max\left\{\frac{v(\infty,\sigma,\tau)}{v(a,\tilde{\sigma_2},\tilde{\tau_2})},\frac{v(a,\tilde{\sigma_2},\tilde{\tau_2})}{v(\infty,\sigma,\tau)}\right\}^q\abs{\ip{\Phi}{\psi_{(a,s',t')}}}^q\,\frac{\mathrm{d}a}{\abs{a}^{d+1}} \biggr)\,\mathrm{d}s'\,\mathrm{d}t',\notag\\
& \esssup_{(\alpha,\sigma,\tau) \in [-1,1]^*\times \R^{d-1}\times\R^d} \int\limits_{\R^d}\int\limits_{\R^{d-1}}\biggl(\max\left\{\frac{v(\alpha,\sigma,\tau)}{v(\infty,\tilde{\sigma_1},\tilde{\tau_1})},\frac{v(\infty,\tilde{\sigma_1},\tilde{\tau_1})}{v(\alpha,\sigma,\tau)}\right\}^q \abs{\ip{\Phi}{\psi_{(\alpha,s',t')}}}^q \\
&\hspace{1cm}+ \int\limits_{-\abs{\alpha}^{-1}}^{\abs{\alpha}^{-1}} \max\left\{\frac{v(\alpha,\sigma,\tau)}{v(\tilde{\alpha},\tilde{\sigma_3},\tilde{\tau_3})},\frac{v(\tilde{\alpha},\tilde{\sigma_3},\tilde{\tau_3})}{v(\alpha,\sigma,\tau)}\right\}^q\abs{\ip{\Psi}{\psi_{(a',s',t')}}}^q\,\frac{\mathrm{d}a'}{\abs{a'}^{d+1}} \biggr)\,\mathrm{d}s'\,\mathrm{d}t' \Biggr\} \notag
\end{align}
with $\tilde{\sigma_1} = \sigma-s', \tilde{\tau_1} = \tau - S_{\tilde{\sigma_1}}t', \tilde{\sigma_2} = \sigma+s', \tilde{\tau_2} = \tau + S_\sigma t', \tilde{\alpha} = \alpha a', \tilde{\sigma_3} = \sigma + \abs{\alpha}^{1-\frac1d}s', \tilde{\tau_3} = \tau + S_\sigma A_\alpha t'$.
\end{lemma}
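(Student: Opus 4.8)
The plan is to reduce the essential supremum to four elementary integrals by exploiting the block structure of the measure space. By \prettyref{definition:measure}, $X$ is the disjoint union of the coarse part $\{\infty\}\times\R^{d-1}\times\R^d$ and the fine part $[-1,1]^*\times\R^{d-1}\times\R^d$, and $\mu$ is correspondingly the sum of a point-mass-times-Lebesgue measure and the restricted Haar measure; both pieces have positive measure. First I would use that the essential supremum over a disjoint union of two parts of positive measure equals the maximum of the two essential suprema, restricting the outer variable $(\alpha,\sigma,\tau)$ once to $\alpha=\infty$ and once to $\alpha\in[-1,1]^*$. This produces the outer maximum. Inside each branch the integral $\int_X\cdots\,\mathrm{d}\mu(a,s,t)$ splits additively into the coarse integral in $(s,t)$ and the fine integral carrying the factor $\frac{\mathrm{d}a}{|a|^{d+1}}$, giving the two summands. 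The four resulting integrands correspond exactly to the four cases \prettyref{eq:IP1}--\prettyref{eq:IP4}.

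Next, in each integral I would replace $|R_\frF((\alpha,\sigma,\tau),(a,s,t))|=|\ip{\psi_{(a,s,t)}}{\psi_{(\alpha,\sigma,\tau)}}|$ by the corresponding right-hand side of \prettyref{eq:IP1}--\prettyref{eq:IP4}, writing it as the modulus of a shearlet-type transform of $\Phi$ (whenever the coarse generator is involved) or of $\Psi$ (in the fine-fine case), and then perform the substitution turning $(s,t)$ — and, in the fine-fine case, $a$ — into the new variables $(s',t')$ and $a'$ for which the transform takes the normalized form $\ip{\Phi}{\psi_{(\cdot,s',t')}}$ or $\ip{\Psi}{\psi_{(\cdot,s',t')}}$ appearing on the right. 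In the three coarse cases this substitution is an affine map (a translation composed with a shear, possibly with a reflection) of unit Jacobian, so $\mathrm{d}s\,\mathrm{d}t=\mathrm{d}s'\,\mathrm{d}t'$ and the range $a\in(-1,1)$ is untouched. In the fine-fine case the map $a=\alpha a'$, $s'=|\alpha|^{1/d-1}(s-\sigma)$, $t'=A_\alpha^{-1}S_\sigma^{-1}(t-\tau)$ is precisely a left translation on the shearlet group, under which $\frac{\mathrm{d}a}{|a|^{d+1}}\,\mathrm{d}s\,\mathrm{d}t$ is invariant; concretely the powers of $|\alpha|$ coming from $\mathrm{d}a=|\alpha|\,\mathrm{d}a'$, $\mathrm{d}s=|\alpha|^{(1-1/d)(d-1)}\,\mathrm{d}s'$, $\mathrm{d}t=|\alpha|^{2-1/d}\,\mathrm{d}t'$ and $|a|^{-(d+1)}=|\alpha|^{-(d+1)}|a'|^{-(d+1)}$ sum to zero.

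It then remains to re-express the weight. Under each substitution the original second point $(a,s,t)$ is recovered from the new variables through the formulas defining $\tilde{\sigma_i},\tilde{\tau_i}$; in the fine-fine case, for instance, $a=\alpha a'=\tilde{\alpha}$, $s=\sigma+|\alpha|^{1-1/d}s'=\tilde{\sigma_3}$ and $t=\tau+S_\sigma A_\alpha t'=\tilde{\tau_3}$, so that $m_v((\alpha,\sigma,\tau),(a,s,t))=m_v((\alpha,\sigma,\tau),(\tilde\alpha,\tilde{\sigma_3},\tilde{\tau_3}))$ is exactly the displayed $\max\{\cdots\}$ factor, and similarly in the remaining three cases. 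At the same time the dilation $a=\alpha a'$ rescales the range $a\in(-1,1)$ to $a'\in(-|\alpha|^{-1},|\alpha|^{-1})$, which accounts for the altered limits of integration in the second branch. Collecting the four pieces into their two branches then yields the claimed identity for every $q$.

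The main obstacle is the fine-fine case \prettyref{eq:IP4}, where three things must be controlled at once: the rescaling of the integration limits to $\pm|\alpha|^{-1}$, the exact cancellation of the $|\alpha|$-powers that makes the group substitution measure preserving, and the identification of the transported point with $(\tilde\alpha,\tilde{\sigma_3},\tilde{\tau_3})$ inside the weight. A secondary subtlety occurs in case \prettyref{eq:IP2}, since there the identity first yields an inner product of $\Psi$ against a $\Phi$-based atom $\varphi_{(\alpha^{-1},\ldots)}$; an additional change of variables inside the defining integral of that inner product is needed to recast it as $\ip{\Phi}{\psi_{(\alpha,s',t')}}$ before the outer substitution can be applied.
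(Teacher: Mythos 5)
Your proposal is correct and follows essentially the same route as the paper: split the essential supremum and the inner integral according to the two components of $X$, rewrite the kernel values via the inner-product identities \prettyref{eq:IP1}--\prettyref{eq:IP4}, and perform the measure-preserving substitutions (unit Jacobian in the coarse cases, exact cancellation of the $\abs{\alpha}$-powers and rescaled limits $\pm\abs{\alpha}^{-1}$ in the fine-fine case) that identify the transported point with $(\tilde{\alpha},\tilde{\sigma_i},\tilde{\tau_i})$ inside the weight. The only cosmetic difference is that the paper sidesteps the extra manipulation you describe for \prettyref{eq:IP2} by using $\abs{\ip{f}{g}}=\abs{\ip{g}{f}}$ together with \prettyref{eq:IP3}.
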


\begin{proof}
Let $(\alpha,\sigma,\tau) \in X$ with $\alpha \in \{\infty\}\cup [-1,1]^*$.
Using~\prettyref{eq:IP1} and~\prettyref{eq:IP3} we obtain
\begin{align*}
\integral{\R^d}{\integral{\R^{d-1}}{\max\left\{\frac{v(\alpha,\sigma,\tau)}{v(\infty,s,t)},\frac{v(\infty,s,t)}{v(\alpha,\sigma,\tau)}\right\}^q\abs{\ip{\psi_{(\infty,s,t)}}{\psi_{(\alpha,\sigma,\tau)}}}^q}{s}}{t}\\
= \integral{\R^d}{\integral{\R^{d-1}}{\max\left\{\frac{v(\alpha,\sigma,\tau)}{v(\infty,s,t)},\frac{v(\infty,s,t)}{v(\alpha,\sigma,\tau)}\right\}^q\abs{\ip{\Phi}{\psi_{(\alpha,\sigma-s,S_s^{-1}(\tau-t))}}}^q}{s}}{t}.
\end{align*}
Substituting $s' = \sigma-s$ and $t' = S_{\sigma-s'}^{-1}(\tau-t)$ then leads to
\begin{equation}\label{eq:firstint}
\begin{split}
\integral{\R^d}{\integral{\R^{d-1}}{\max\left\{\frac{v(\alpha,\sigma,\tau)}{v(\infty,s,t)},\frac{v(\infty,s,t)}{v(\alpha,\sigma,\tau)}\right\}^q\abs{\ip{\psi_{(\infty,s,t)}}{\psi_{(\alpha,\sigma,\tau)}}}^q}{s}}{t}\\
= \integral{\R^d}{\integral{\R^{d-1}}{\max\left\{\frac{v(\alpha,\sigma,\tau)}{v(\infty,\tilde{\sigma_1},t)},\frac{v(\infty,\tilde{\sigma_1},t)}{v(\alpha,\sigma,\tau)}\right\}^q\absbig{\ipbig{\Phi}{\psi_{(\alpha,s',S_{\sigma-s'}^{-1}(\tau-t))}}}^q}{s'}}{t}\\
= \integral{\R^d}{\integral{\R^{d-1}}{\max\left\{\frac{v(\alpha,\sigma,\tau)}{v(\infty,\tilde{\sigma_1},\tilde{\tau_1})},\frac{v(\infty,\tilde{\sigma_1},\tilde{\tau_1})}{v(\alpha,\sigma,\tau)}\right\}^q\abs{\ip{\Phi}{\psi_{(\alpha,s',t')}}}^q}{s'}}{t'}.
\end{split}
\end{equation}
Analogously we see that
\begin{equation}\label{eq:secondint}
\begin{split}
\integral{\R^d}{\integral{\R^{d-1}}{\int\limits_{-1}^{1} \max\left\{\frac{v(\infty,\sigma,\tau)}{v(a,s,t)},\frac{v(a,s,t)}{v(\infty,\sigma,\tau)}\right\}^q\abs{\ip{\psi_{(a,s,t)}}{\psi_{(\infty,\sigma,\tau)}}}^q\,\frac{\mathrm{d}a}{\abs{a}^{d+1}}}{s}}{t}\\
= \integral{\R^d}{\integral{\R^{d-1}}{\int\limits_{-1}^{1} \max\left\{\frac{v(\infty,\sigma,\tau)}{v(a,\tilde{\sigma_2},\tilde{\tau_2})},\frac{v(a,\tilde{\sigma_2},\tilde{\tau_2})}{v(\infty,\sigma,\tau)}\right\}^q\abs{\ip{\Phi}{\psi_{(a,s',t')}}}^q\,\frac{\mathrm{d}a}{\abs{a}^{d+1}}}{s'}}{t'},
\end{split}
\end{equation}
for $\sigma \in \R^{d-1}$ and $\tau \in \R^d$.
Now let $\alpha \in [-1,1]^*$.
Then~\prettyref{eq:IP4} yields
\begin{align*}
\lefteqn{\integral{\R^d}{\integral{\R^{d-1}}{\int\limits_{-1}^1 \max\left\{\frac{v(\alpha,\sigma,\tau)}{v(a,s,t)},\frac{v(a,s,t)}{v(\alpha,\sigma,\tau)}\right\}^q\abs{\ip{\psi_{(a,s,t)}}{\psi_{(\alpha,\sigma,\tau)}}}^q\,\frac{\mathrm{d}a}{\abs{a}^{d+1}}}{s}}{t}}\\
&= \int\limits_{\R^d} \int\limits_{\R^{d-1}} \int\limits_{-1}^1 \max\left\{\frac{v(\alpha,\sigma,\tau)}{v(a,s,t)},\frac{v(a,s,t)}{v(\alpha,\sigma,\tau)}\right\}^q\\
&\hspace{4cm}\cdot\absbig{\ipbig{\Psi}{\psi_{(a {\alpha}^{-1},\abs{\alpha}^{\frac{1}{d} - 1} (s - \sigma),A_{\alpha}^{-1} S_{\sigma}^{-1} (t - \tau))}}}^q\,\frac{\mathrm{d}a}{\abs{a}^{d+1}}\,\mathrm{d}s\,\mathrm{d}t,
\end{align*}
which---by substituting $a' := a {\alpha}^{-1}$---leads to
\begin{align*}
\lefteqn{\integral{\R^d}{\integral{\R^{d-1}}{\int\limits_{-1}^1 \max\left\{\frac{v(\alpha,\sigma,\tau)}{v(a,s,t)},\frac{v(a,s,t)}{v(\alpha,\sigma,\tau)}\right\}^q\abs{\ip{\psi_{(a,s,t)}}{\psi_{(\alpha,\sigma,\tau)}}}^q\,\frac{\mathrm{d}a}{\abs{a}^{d+1}}}{s}}{t}}\\
&= \int\limits_{\R^d}\int\limits_{\R^{d-1}}\int\limits_{-\abs{\alpha}^{-1}}^{\abs{\alpha}^{-1}} \max\left\{\frac{v(\alpha,\sigma,\tau)}{v(\tilde{\alpha},s,t)},\frac{v(\tilde{\alpha},s,t)}{v(\alpha,\sigma,\tau)}\right\}^q\\
&\hspace{4cm}\cdot\absbig{\ipbig{\Psi}{\psi_{(a',\abs{\alpha}^{\frac{1}{d} - 1} (s - \sigma),A_{\alpha}^{-1} S_{\sigma}^{-1} (t - \tau))}}}^q \frac{1}{\abs{\alpha}^d}\,\frac{\mathrm{d}a'}{\abs{a'}^{d+1}}\,\mathrm{d}s\,\mathrm{d}t.
\end{align*}
Again, substituting with $s' := \abs{\alpha}^{\frac{1}{d}-1} (s - \sigma)$ and $t' := A_{\alpha}^{-1} S_{\sigma}^{-1} (t - \tau)$, we get
\begin{equation}\label{eq:thirdint}
\begin{split}
\lefteqn{\integral{\R^d}{\integral{\R^{d-1}}{\int\limits_{-1}^1 \max\left\{\frac{v(\alpha,\sigma,\tau)}{v(a,s,t)},\frac{v(a,s,t)}{v(\alpha,\sigma,\tau)}\right\}^q\abs{\ip{\psi_{(a,s,t)}}{\psi_{(\alpha,\sigma,\tau)}}}^q\,\frac{\mathrm{d}a}{\abs{a}^{d+1}}}{s}}{t}}\\
&= \int\limits_{\R^d}\int\limits_{\R^{d-1}}\int\limits_{-\abs{\alpha}^{-1}}^{\abs{\alpha}^{-1}} \max\left\{\frac{v(\alpha,\sigma,\tau)}{v(\tilde{\alpha},\tilde{\sigma_3},t)},\frac{v(\tilde{\alpha},\tilde{\sigma_3},t)}{v(\alpha,\sigma,\tau)}\right\}^q\\
&\hspace{4cm}\cdot\absbig{\ipbig{\Psi}{\psi_{(a',s',A_{\alpha}^{-1} S_{\sigma}^{-1} (t - \tau))}}}^q \abs{\alpha}^{\frac{1}{d} - 2}\,\frac{\mathrm{d}a'}{\abs{a'}^{d+1}}\,\mathrm{d}s'\,\mathrm{d}t\\
&= \integral{\R^d}{\integral{\R^{d-1}}{\int\limits_{-\abs{\alpha}^{-1}}^{\abs{\alpha}^{-1}} \max\left\{\frac{v(\alpha,\sigma,\tau)}{v(\tilde{\alpha},\tilde{\sigma_3},\tilde{\tau_3})},\frac{v(\tilde{\alpha},\tilde{\sigma_3},\tilde{\tau_3})}{v(\alpha,\sigma,\tau)}\right\}^q\abs{\ip{\Psi}{\psi_{(a',s',t')}}}^q\,\frac{\mathrm{d}a'}{\abs{a'}^{d+1}}}{s'}}{t'}.
\end{split}
\end{equation}
Using~\prettyref{eq:firstint},~\prettyref{eq:secondint}, and~\prettyref{eq:thirdint}, we now have
\begin{align*}
\lefteqn{\esssup_{(\alpha,\sigma,\tau) \in X} \integral{X}{\abs{R_\frF ((\alpha,\sigma,\tau),(a,s,t))}^q m_v((\alpha,\sigma,\tau),(a,s,t))^q}{\mu (a,s,t)}}\\
&= \esssup_{(\alpha,\sigma,\tau) \in X} \int\limits_{\R^d}\int\limits_{\R^{d-1}} \Biggl(\max\left\{\frac{v(\alpha,\sigma,\tau)}{v(\infty,s,t)},\frac{v(\infty,s,t)}{v(\alpha,\sigma,\tau)}\right\}^q\abs{\ip{\psi_{(\infty,s,t)}}{\psi_{(\alpha,\sigma,\tau)}}}^q\\
&\hspace{1cm}+ \int\limits_{-1}^{1} \max\left\{\frac{v(\alpha,\sigma,\tau)}{v(a,s,t)},\frac{v(a,s,t)}{v(\alpha,\sigma,\tau)}\right\}^q\abs{\ip{\psi_{(a,s,t)}}{\psi_{(\alpha,\sigma,\tau)}}}^q\,\frac{\mathrm{d}a}{\abs{a}^{d+1}} \Biggr)\,\mathrm{d}s\,\mathrm{d}t\\
&= \max \Biggl\{ \esssup_{(\sigma,\tau) \in \R^{d-1}\times \R^d} \int\limits_{\R^d} \int\limits_{\R^{d-1}} \Biggl(\max\left\{\frac{v(\infty,\sigma,\tau)}{v(\infty,s,t)},\frac{v(\infty,s,t)}{v(\infty,\sigma,\tau)}\right\}^q\abs{\ip{\psi_{(\infty,s,t)}}{\psi_{(\infty,\sigma,\tau)}}}^q\\
&\hspace{1cm}+ \int\limits_{-1}^{1} \max\left\{\frac{v(\infty,\sigma,\tau)}{v(a,s,t)},\frac{v(a,s,t)}{v(\infty,\sigma,\tau)}\right\}^q\abs{\ip{\psi_{(a,s,t)}}{\psi_{(\infty,\sigma,\tau)}}}^q\,\frac{\mathrm{d}a}{\abs{a}^{d+1}} \Biggr)\,\mathrm{d}s\,\mathrm{d}t ,\\
& \esssup_{(\alpha,\sigma,\tau) \in [-1,1]^*\times \R^{d-1}\times \R^d} \int\limits_{\R^d} \int\limits_{\R^{d-1}} \Biggl(\max\left\{\frac{v(\alpha,\sigma,\tau)}{v(\infty,s,t)},\frac{v(\infty,s,t)}{v(\alpha,\sigma,\tau)}\right\}^q\abs{\ip{\psi_{(\infty,s,t)}}{\psi_{(\alpha,\sigma,\tau)}}}^q\\
&\hspace{1cm}+ \int\limits_{-1}^{1} \max\left\{\frac{v(\alpha,\sigma,\tau)}{v(a,s,t)},\frac{v(a,s,t)}{v(\alpha,\sigma,\tau)}\right\}^q\abs{\ip{\psi_{(a,s,t)}}{\psi_{(\alpha,\sigma,\tau)}}}^q\,\frac{\mathrm{d}a}{\abs{a}^{d+1}} \Biggr) \,\mathrm{d}s\,\mathrm{d}t \Biggr\}\\
&= \max \Biggl\{ \esssup_{(\sigma,\tau)\in \R^{d-1}\times \R^d} \int\limits_{\R^d}\int\limits_{\R^{d-1}}\biggl( \max\left\{\frac{v(\infty,\sigma,\tau)}{v(\infty,\tilde{\sigma_1},\tilde{\tau_1})},\frac{v(\infty,\tilde{\sigma_1},\tilde{\tau_1})}{v(\infty,\sigma,\tau)}\right\}^q \abs{\ip{\Phi}{\psi_{(\infty,s',t')}}}^q \\
&\hspace{1cm}+ \int\limits_{-1}^{1} \max\left\{\frac{v(\infty,\sigma,\tau)}{v(a,\tilde{\sigma_2},\tilde{\tau_2})},\frac{v(a,\tilde{\sigma_2},\tilde{\tau_2})}{v(\infty,\sigma,\tau)}\right\}^q\abs{\ip{\Phi}{\psi_{(a,s',t')}}}^q\,\frac{\mathrm{d}a}{\abs{a}^{d+1}} \biggr)\,\mathrm{d}s'\,\mathrm{d}t',\\
& \esssup_{(\alpha,\sigma,\tau) \in [-1,1]^*\times \R^{d-1}\times\R^d} \int\limits_{\R^d}\int\limits_{\R^{d-1}}\biggl(\max\left\{\frac{v(\alpha,\sigma,\tau)}{v(\infty,\tilde{\sigma_1},\tilde{\tau_1})},\frac{v(\infty,\tilde{\sigma_1},\tilde{\tau_1})}{v(\alpha,\sigma,\tau)}\right\}^q \abs{\ip{\Phi}{\psi_{(\alpha,s',t')}}}^q \\
&\hspace{1cm}+ \int\limits_{-\abs{\alpha}^{-1}}^{\abs{\alpha}^{-1}} \max\left\{\frac{v(\alpha,\sigma,\tau)}{v(\tilde{\alpha},\tilde{\sigma_3},\tilde{\tau_3})},\frac{v(\tilde{\alpha},\tilde{\sigma_3},\tilde{\tau_3})}{v(\alpha,\sigma,\tau)}\right\}^q\abs{\ip{\Psi}{\psi_{(a',s',t')}}}^q\,\frac{\mathrm{d}a'}{\abs{a'}^{d+1}} \biggr)\,\mathrm{d}s'\,\mathrm{d}t' \Biggr\}
\end{align*}
\end{proof}

We use~\prettyref{lemma:kernel} to prove $R_\frF \in \mathcal{A}_{q,m_v}$ for certain functions $\Phi$ and $\Psi$.
Since it is not possible to construct functions $\Phi, \Psi \in L_2$ with compact support in the spatial domain satisfying the conditions in~\prettyref{remark:choicePhi},
 in the following we assume $\Psi$ to be a \emph{bandlimited} Schwartz function, in particular \[ \supp \hat{\Psi} \subseteq ([-a_1,-a_0]\cup [a_0,a_1])\times Q_b \] with $0 < a_0 < a_1$ and $Q_b := \cartesian_{i = 1}^{d-1} [-b_i,b_i]$ for $b \in \R^{d-1}_+$.
The function $\Phi$ is chosen in the same way as in \prettyref{remark:choicePhi}.
It follows that
\begin{equation*}
\supp\hat\Phi \subseteq [-a_1,a_1] \cartesian Q_b.
\end{equation*}
As weight functions on $X$ we consider
\begin{equation}\label{eq:weight}
v_{r}(\alpha,s,t) = v_{r}(\alpha) := \begin{cases}
1, &\alpha = \infty,\\
|\alpha|^{-r}, &\alpha\in [-1,1]^\ast,
\end{cases}
\end{equation}
with $r\in\R_{\geq 0}$, which satisfy all necessary conditions.
Through simple calculations one can verify the following properties of the moderate weight $m_{v_r}$ associated with $v_{r}$ for $a,a' \in [-1,1]^\ast$:
\begin{align}
m_{v_{r}}(\infty,\infty) &= 1,\label{eq:propertyweight1}\\
m_{v_{r}}(a,\infty) &= m_{v_r}(\infty,a) = |a|^{-r},\label{eq:propertyweight2}\\
m_{v_{r}}(a,a') &= \max\left\{\frac{\abs{a}}{\abs{a'}},\frac{\abs{a'}}{\abs{a}}\right\}^{-r}.\label{eq:propertyweight3}
\end{align}

The following technical lemma concerns support properties of $\Phi$ and $\Psi$ in the frequency domain, similar to~Lemma~3.1,~Ref.~\cite{Dahlke:ShearletArbitrary}.

\begin{lemma}\label{lemma:param}
Let $0<a_0<a_1$ and $b\in\R^{d-1}_+$. Then with $\Psi$ and $\Phi$ defined as above and for $a\in\R^\ast$ and $s\in\R^{d-1}$ we have
\begin{thmlist}
\item\label{list:parampsipsi} $\hat{\Psi}\hat\Psi(A_aS_s^T\cdot)\not\equiv 0$ implies $a\in[-\frac{a_1}{a_0},-\frac{a_0}{a_1}]\cup[\frac{a_0}{a_1},\frac{a_1}{a_0}]$ and $s\in Q_{d_1}$ with $d_1:=(a_0^{-1}+a_0^{-(1+\frac{1}{d})}a_1^{\frac{1}{d}})b$,
\item\label{list:paramphipsi} Assume $\abs{a}\leq 1$ then $\hat{\Phi}\hat\Psi(A_aS_s^T\cdot)\not\equiv 0$ implies $a\in[-1,-\frac{a_0}{a_1}]\cup[\frac{a_0}{a_1},1]$ and $s\in Q_{d_2}$ with $d_2:=(a_0^{-1}+a_0^{-(1+\frac1d)}a_1^\frac1d)b$,
\item\label{list:paramphiphi} $\supp\hat{\Phi}\hat{\Phi}(S_s^T\cdot)\subseteq\Omega_s:=\{x\in\R^d:|x_1|\leq a_1,\max\{-b_i,-b_i-s_{i-1}x_1\}\leq x_i\leq\min\{b_i,b_i-s_{i-1}x_1\},i=2,\ldots d\}$.
\end{thmlist}
\end{lemma}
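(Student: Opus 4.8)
The proof of all three parts rests on the same elementary observation: for functions $f,g$ and an invertible matrix $M$, the product $f\cdot g(M\,\cdot)$ fails to vanish identically precisely when there exists a point $\xi$ with $\xi\in\supp f$ and $M\xi\in\supp g$. The plan is to apply this with $f,g\in\{\hat\Phi,\hat\Psi\}$ and $M=A_aS_s^T$ (for (i) and (ii)) or $M=S_s^T$ (for (iii)), translate the two membership conditions into coordinatewise inequalities, and then read off the constraints on $a$ and $s$. Writing $\xi=(\xi_1,\tilde\xi)$ with $\tilde\xi\in\R^{d-1}$, a direct computation gives $S_s^T\xi=(\xi_1,\tilde\xi+s\xi_1)$ and hence
\[
A_aS_s^T\xi=\bigl(a\xi_1,\ \sgn(a)\abs{a}^{\frac1d}(\tilde\xi+s\xi_1)\bigr).
\]

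For (i) I would exploit that $\xi\in\supp\hat\Psi$ forces $\abs{\xi_1}\in[a_0,a_1]$, while $A_aS_s^T\xi\in\supp\hat\Psi$ forces $\abs{a\xi_1}\in[a_0,a_1]$. Dividing these two relations yields $\abs{a}=\abs{a\xi_1}/\abs{\xi_1}\in[a_0/a_1,a_1/a_0]$, which is exactly the claimed range. The shear bound then comes from the remaining coordinates: from $A_aS_s^T\xi\in\supp\hat\Psi$ one gets $\abs{a}^{\frac1d}\abs{s_i\xi_1+\tilde\xi_i}\le b_i$, and combining with $\abs{\tilde\xi_i}\le b_i$ via the triangle inequality gives $\abs{s_i\xi_1}\le(1+\abs{a}^{-\frac1d})b_i$. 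Dividing by $\abs{\xi_1}\ge a_0$ and inserting $\abs{a}^{-\frac1d}\le(a_1/a_0)^{\frac1d}$ produces $\abs{s_i}\le(a_0^{-1}+a_0^{-(1+\frac1d)}a_1^{\frac1d})b_i$, i.e. $s\in Q_{d_1}$. Part (ii) follows the same template: since $\xi\in\supp\hat\Phi$ only gives $\abs{\xi_1}\le a_1$, the condition $\abs{a\xi_1}\ge a_0$ yields $\abs{a}\ge a_0/a_1$, which together with the hypothesis $\abs{a}\le 1$ gives the stated range; the resulting lower bound $\abs{\xi_1}\ge a_0/\abs{a}\ge a_0$ then feeds into the identical shear estimate, so the same constant $d_2=d_1$ works.

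Part (iii) is the most direct, since here $M=S_s^T$ leaves the first coordinate unchanged. The conditions $\xi\in\supp\hat\Phi$ and $S_s^T\xi\in\supp\hat\Phi$ read $\abs{\xi_1}\le a_1$ together with $\abs{\xi_i}\le b_{i-1}$ and $\abs{s_{i-1}\xi_1+\xi_i}\le b_{i-1}$ for $i=2,\dots,d$. Intersecting, for each $i$, the interval $[-b_{i-1},b_{i-1}]$ with $[-b_{i-1}-s_{i-1}\xi_1,\,b_{i-1}-s_{i-1}\xi_1]$ gives exactly the description of $\Omega_s$, so here no restriction on the parameters is extracted---only the support set is identified.

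I expect the only genuine bookkeeping difficulty to be the shear estimate in (i) and (ii): one must use the first-coordinate constraints to secure both the lower bound $\abs{\xi_1}\ge a_0$ and the control $\abs{a}^{-\frac1d}\le(a_1/a_0)^{\frac1d}$ before splitting via the triangle inequality, since a careless split would yield a larger constant than the sharp $d_1$. Everything else is a routine translation of the support conditions into inequalities.
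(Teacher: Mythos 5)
Your proposal is correct and follows essentially the same route as the paper: translate the non-vanishing of the product into the existence of a point lying in both supports, read off the coordinatewise inequalities from the explicit form of $A_aS_s^T\xi$, and combine them to bound $a$ and $s$ (the paper defers part (i) to Lemma~3.1 of Ref.~\cite{Dahlke:ShearletArbitrary}, but your direct derivation matches the technique used for part (ii) there, and your constants agree with $d_1=d_2$). The only cosmetic remark is that your indexing $b_{i-1}$ in part (iii) is the consistent one given $b\in\R^{d-1}_+$, and your phrase ``precisely when'' should be read as the one implication actually needed, since supports are closures.
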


\begin{proof}
The proof of \prettyref{list:parampsipsi} can be found in~Lemma~3.1,~Ref.~\cite{Dahlke:ShearletArbitrary}.
To prove \prettyref{list:paramphipsi} we assume there exists a $\xi \in \supp \hat{\Phi}\cap \supp \hat{\Psi}(A_a S_s^T\cdot)$ which means that $\xi \in \supp \hat{\Phi} \mbox{ and } A_a S_s^T \xi \in \supp \hat{\Psi}.$
This leads to
\begin{gather}
\abs{\xi_1} \leq a_1, \label{eq:supp1}\\
-b_i \leq \xi_{i+1} \leq b_i, \label{eq:supp2}\\
a_0 \leq \abs{a} \abs{\xi_1} \leq a_1, \label{eq:supp3}\\
-b_i \abs{a}^{-\frac{1}{d}} - \xi_1 s_i \leq \xi_{i+1} \leq b_i \abs{a}^{-\frac{1}{d}} - \xi_1 s_i, \label{eq:supp4}
\end{gather}
for $i = 1,\ldots,d-1$.
By \eqref{eq:supp1} and \eqref{eq:supp3} it follows that $\abs{a} \geq \frac{a_0}{a_1}$ which means $a \in [-1,-\frac{a_0}{a_1}]\cup[\frac{a_0}{a_1},1]$.
Using \eqref{eq:supp3} and $\abs{a} \leq 1$, it follows that $a_0 \leq \abs{a} \abs{\xi_1} \leq \abs{\xi_1}$.
Also, with \eqref{eq:supp4} and \eqref{eq:supp2} we obtain
\begin{equation*}
-b_i \abs{a}^{-\frac{1}{d}}-b_i \leq \xi_1 s_i \leq b_i \abs{a}^{-\frac{1}{d}}+b_i,
\end{equation*}
which leads to
\begin{equation*}
\abs{s_i} \leq \abs{\xi_1}^{-1}( b_i \abs{a}^{-\frac{1}{d}} + b_i) \leq a_0^{-1} b_i \left(\frac{a_0}{a_1}\right)^{-\frac{1}{d}} + a_0^{-1}b_i
\end{equation*}
for $i = 1,\ldots,d-1$ which proves \prettyref{list:paramphipsi}.
To prove \prettyref{list:paramphiphi} we assume there exists $\xi\in\supp\hat\Phi\cap\supp\hat\Phi(S_s^T\cdot)$ which means that $\xi\in\supp\hat\Phi$ and $S_s^T\xi\in\supp\hat\Phi$. This leads to
\begin{gather}
\abs{\xi_1} \leq a_1,\\
-b_i \leq \xi_{i+1} \leq b_i,\\
-b_i \leq \xi_1 s_i + \xi_{i+1} \leq b_i
\end{gather}
for all $i=1,\ldots,d-1$, which means $\xi\in\Omega_s$.
\end{proof}

The following two auxiliary Lemmas are of technical nature only and the proof of \prettyref{lemma:subconvolution1} is based on a draft by Steidl, Dahlke, H\"auser and Teschke.

\begin{lemma}\label{lemma:subconvolution1}
For all $y,z\in\R$, $\lambda,\lambda'>0$ and $k>1$ the following integral estimation holds true
\begin{align*}
\integral{\R}{(1+\lambda|x-y|)^{-k}(1+\lambda'|x-z|)^{-k}}{x} \lesssim \max\{\lambda,\lambda'\}^{-1}(1+\min\{\lambda,\lambda'\}|y-z|)^{-k}.
\end{align*}
\end{lemma}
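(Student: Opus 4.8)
The plan is to reduce the estimate to a one--dimensional computation and then split the real line into the half closer to $y$ and the half closer to $z$. First I would observe that both the integrand and the claimed right--hand side are invariant under interchanging the pairs $(\lambda,y)$ and $(\lambda',z)$, so I may assume $\lambda\geq\lambda'$; set $r:=|y-z|$ and $I:=\int_{\R}(1+\lambda|x-y|)^{-k}(1+\lambda'|x-z|)^{-k}\,\mathrm{d}x$, and record the elementary scaling identity $\int_{\R}(1+\lambda|x-y|)^{-k}\,\mathrm{d}x=C_k\,\lambda^{-1}$ with $C_k:=\int_{\R}(1+|u|)^{-k}\,\mathrm{d}u$, which is \emph{finite precisely because} $k>1$. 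The whole argument then hinges on a case distinction at $\lambda' r=1$, that is, on whether the two bumps overlap or are well separated.

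In the overlapping regime $\lambda' r\leq 1$ the target decay factor satisfies $(1+\lambda' r)^{-k}\geq 2^{-k}$, so it is enough to show $I\lesssim\lambda^{-1}$. This is immediate: bounding the wider factor crudely by $(1+\lambda'|x-z|)^{-k}\leq 1$ and integrating the narrower one gives $I\leq C_k\,\lambda^{-1}$, and since $\lambda=\max\{\lambda,\lambda'\}$ and $\lambda^{-1}\leq 2^k\lambda^{-1}(1+\lambda' r)^{-k}$ here, the claim follows with a $k$--dependent constant.

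In the separated regime $\lambda' r>1$ I would split $\R=U\cup V$ into the half--lines $U=\{|x-y|\leq|x-z|\}$ and $V=\{|x-y|>|x-z|\}$. The triangle inequality $r\leq|x-y|+|x-z|$ gives $|x-z|\geq r/2$ throughout $U$ and $|x-y|\geq r/2$ throughout $V$. On $U$ I bound the far factor by its boundary value, $(1+\lambda'|x-z|)^{-k}\leq(1+\lambda' r/2)^{-k}$, and integrate the near factor over $U\subseteq\R$, obtaining $\int_U(\,\cdots)\,\mathrm{d}x\leq C_k\,\lambda^{-1}(1+\lambda' r/2)^{-k}\lesssim\lambda^{-1}(1+\lambda' r)^{-k}$. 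The symmetric computation on $V$ yields $\int_V(\,\cdots)\,\mathrm{d}x\leq C_k\,\lambda'^{-1}(1+\lambda r/2)^{-k}$.

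I expect the $V$--term to be the main obstacle: integrating the \emph{wider} bump there produces the prefactor $\lambda'^{-1}=\min\{\lambda,\lambda'\}^{-1}$, which is larger than the desired $\max\{\lambda,\lambda'\}^{-1}$, so a naive bound is genuinely off in the overlapping case — this is exactly what forces the case split. In the separated regime, however, the decay factor compensates: from $\lambda' r>1$ one gets $1+\lambda r/2\geq \lambda r/2\geq \tfrac{\lambda}{4\lambda'}(1+\lambda' r)$ (the last step being equivalent to $2\lambda' r\geq 1+\lambda' r$), hence $(1+\lambda r/2)^{-k}\leq(4\lambda'/\lambda)^{k}(1+\lambda' r)^{-k}$ and therefore $\int_V(\,\cdots)\,\mathrm{d}x\leq C_k\,4^{k}(\lambda'/\lambda)^{k-1}\lambda^{-1}(1+\lambda' r)^{-k}\leq C_k\,4^{k}\lambda^{-1}(1+\lambda' r)^{-k}$, using $\lambda'\leq\lambda$ together with $k>1$ in the final inequality. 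Adding the bounds on $U$ and $V$ and recalling $\lambda=\max\{\lambda,\lambda'\}$, $\lambda'=\min\{\lambda,\lambda'\}$ yields $I\lesssim\max\{\lambda,\lambda'\}^{-1}(1+\min\{\lambda,\lambda'\}\,|y-z|)^{-k}$ with a constant depending only on $k$, as required.
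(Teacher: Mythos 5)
Your proof is correct and follows essentially the same route as the paper's: the same case split at $\min\{\lambda,\lambda'\}\,|y-z|\lessgtr 1$, the same trivial bound in the overlapping regime, and the same decomposition of $\R$ into the two half-lines meeting at $\tfrac{y+z}{2}$ in the separated regime, with the factor $(\lambda'/\lambda)^{k-1}\leq 1$ absorbing the unwanted $\min\{\lambda,\lambda'\}^{-1}$ prefactor exactly as in the paper. The only differences are cosmetic (you take $\lambda$ to be the larger parameter, the paper the smaller), so there is nothing to change.
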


\begin{proof}
Let $y,z\in\R$ be arbitrary and assume without loss of generality that $\lambda\leq\lambda'$. Assume further that $|y-z|\leq\lambda^{-1}$, then
\begin{equation*}
(1+\lambda|x-y|)^{-k} \leq 1 \leq 2^k (1+\lambda|y-z|)^{-k}
\end{equation*}
and thus
\begin{align}\label{eq:lemma37_1}
\integral{\R}{(1+\lambda|x-y|)^{-k}(1+\lambda'|x-z|)^{-k}}{x} &\lesssim (1+\lambda|y-z|)^{-k}\integral{\R}{(1+\lambda'|x-z|)^{-k}}{x} \notag\\
&= (1+\lambda'|y-z|)^{-k}\frac{1}{\lambda'}\integral{\R}{(1+|x|)^{-k}}{x} \notag\\
&\lesssim \frac{1}{\lambda'}(1+\lambda|y-z|)^{-k}.
\end{align}
On the other hand if $|y-z|>\lambda^{-1}$ let $H_y$ and $H_z$ be the two half-axes containing the points $y$ and $z$ respectively, such that $H_y\cap H_z=\{\frac{y+z}{2}\}$. Then, for every $x\in H_z$ it holds $|x-y|\geq\frac12|y-z|$ and thus
\begin{align}\label{eq:lemma37_2}
\integral{H_z}{(1+\lambda|x-y|)^{-k}(1+\lambda'|x-z|)^{-k}}{x} &\leq \left(1+\frac{\lambda}{2}|y-z|\right)^{-k}\integral{H_z}{(1+\lambda'|x-z|)^{-k}}{x} \notag\\
&\lesssim (1+\lambda|y-z|)^{-k}\frac{1}{\lambda'}\integral{\R}{(1+|x|)^{-k}}{x} \notag\\
&\lesssim \frac{1}{\lambda'}(1+\lambda|y-z|)^{-k}
\end{align}
Similarily for every $x\in H_y$ it holds $|x-z|\geq\frac12|y-z|$ and since $|y-z|>\lambda^{-1}$ we first deduce
\begin{align*}
(1+\lambda'|x-z|)^{-k} &\leq \left(\frac{\lambda'}{2}|y-z|\right)^{-k} \lesssim \left(\frac{\lambda}{\lambda'}\right)^k(\lambda|y-z|)^{-k}\\
& \lesssim \left(\frac{\lambda}{\lambda'}\right)^k(1+\lambda|y-z|)^{-k} \leq \frac{\lambda}{\lambda'}(1+\lambda|y-z|)^{-k}
\end{align*}
and hence we derive the estimate
\begin{align}\label{eq:lemma37_3}
\integral{H_y}{(1+\lambda|x-y|)^{-k}(1+\lambda'|x-z|)^{-k}}{x} &\leq \frac{\lambda}{\lambda'}(1+\lambda|y-z|)^{-k}\integral{H_y}{(1+\lambda|x-y|)^{-k}}{x} \notag\\
&\leq \frac{1}{\lambda'}(1+\lambda|y-z|)^{-k}\integral{\R}{(1+|x|)^{-k}}{x} \notag\\
&\lesssim \frac{1}{\lambda'}(1+\lambda|y-z|)^{-k}.
\end{align}
Combining \prettyref{eq:lemma37_2} and \prettyref{eq:lemma37_3} thus yields
\begin{align*}
&\integral{\R}{(1+|x-y|)^{-k}(1+\lambda|x-z|)^{-k}}{x}\\
&\qquad= \biggl(\int\limits_{H_y}+\int\limits_{H_z}\biggr)(1+|x-y|)^{-k}(1+\lambda|x-z|)^{-k}\,\mathrm{d}x \lesssim \frac{1}{\lambda'}(1+\lambda|y-z|)^{-k}
\end{align*}
and together with \prettyref{eq:lemma37_1} this completes the proof.
\end{proof}

\begin{lemma}\label{lemma:subconvolution2}
For all $y,z\in\R^\ast$, $\lambda\neq 0$ and $k>1$ we have
\begin{align*}
&\integral{\R}{(1+|x|)^{-k}(1+|x-y|)^{-k}(1+|\lambda x-z|)^{-k}}{x} \\
&\hspace{1cm} \lesssim (1+|y|)^{-k}\max\{1,|\lambda|\}^{-1}\\
&\hspace{3cm}\cdot\left[\left(1+\min\{1,|\lambda|\}\left|y-\frac{z}{\lambda}\right|\right)^{-k}+\left(1+\min\{1,|\lambda|\}\left|\frac{z}{\lambda}\right|\right)^{-k}\right].
\end{align*}
\end{lemma}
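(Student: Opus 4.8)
The plan is to reduce the three–factor integral to a pair of two–factor integrals, each of which is handled by the already established \prettyref{lemma:subconvolution1}. The first step is purely cosmetic: since $\lambda\neq 0$ I rewrite the last factor as $(1+|\lambda x-z|)^{-k}=(1+|\lambda|\,|x-\frac{z}{\lambda}|)^{-k}$, so that it takes the form $(1+\lambda'|x-z'|)^{-k}$ with scale $\lambda'=|\lambda|>0$ and center $z'=\frac{z}{\lambda}$ exactly as needed for \prettyref{lemma:subconvolution1}.

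The key idea is to extract the global factor $(1+|y|)^{-k}$ from the two ``slow'' factors $(1+|x|)^{-k}$ and $(1+|x-y|)^{-k}$. By the triangle inequality $|x|+|x-y|\geq|y|$, for every $x\in\R$ at least one of $|x|\geq\frac12|y|$ or $|x-y|\geq\frac12|y|$ holds. Accordingly I split $\R=E_1\cup E_2$ with $E_1=\{x:|x|\geq\frac12|y|\}$ and $E_2=\{x:|x-y|\geq\frac12|y|\}$, and bound the full integral by the sum of the integrals over $E_1$ and over $E_2$.

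On $E_1$ I use $(1+|x|)^{-k}\leq(1+\tfrac12|y|)^{-k}\lesssim(1+|y|)^{-k}$, pull this constant out, enlarge the domain back to all of $\R$ (the integrand is nonnegative), and apply \prettyref{lemma:subconvolution1} to the remaining product $(1+|x-y|)^{-k}(1+|\lambda|\,|x-\frac{z}{\lambda}|)^{-k}$, whose scales are $1$ and $|\lambda|$ and whose centers are $y$ and $\frac{z}{\lambda}$. This produces $(1+|y|)^{-k}\max\{1,|\lambda|\}^{-1}(1+\min\{1,|\lambda|\}|y-\frac{z}{\lambda}|)^{-k}$, i.e. the first bracket term. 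On $E_2$ I argue symmetrically, bounding $(1+|x-y|)^{-k}\lesssim(1+|y|)^{-k}$ and applying \prettyref{lemma:subconvolution1} to $(1+|x|)^{-k}(1+|\lambda|\,|x-\frac{z}{\lambda}|)^{-k}$; here the centers are $0$ and $\frac{z}{\lambda}$, so the lemma yields the second bracket term $(1+|y|)^{-k}\max\{1,|\lambda|\}^{-1}(1+\min\{1,|\lambda|\}|\frac{z}{\lambda}|)^{-k}$. Adding the two contributions gives exactly the claimed estimate.

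There is no serious obstacle once \prettyref{lemma:subconvolution1} is in hand; the argument is routine. The only points needing a little care are matching the normalization when rewriting the third factor, so that the scale $|\lambda|$ and the correct center $\frac{z}{\lambda}$ appear, and checking that the positivity hypothesis $\lambda,\lambda'>0$ of \prettyref{lemma:subconvolution1} is satisfied, which it is since $1>0$ and $|\lambda|>0$ owing to $\lambda\neq 0$. Conceptually, the triangle-inequality split into $E_1$ and $E_2$ is the heart of the proof: it is what decouples the two slow factors and lets each two–factor integral be absorbed into \prettyref{lemma:subconvolution1}.
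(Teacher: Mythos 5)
Your proof is correct and follows essentially the same route as the paper's: the paper partitions $\R$ into $N_y=\{x:|x-y|\leq\frac{|y|}{2}\}$ and its complement, bounds the factor $(1+|x|)^{-k}$ (resp.\ $(1+|x-y|)^{-k}$) by $(1+|y|)^{-k}$ on each piece, and then applies \prettyref{lemma:subconvolution1} to the remaining two factors exactly as you do. Your overlapping cover $E_1\cup E_2$ versus the paper's disjoint partition is an immaterial difference, since the integrand is nonnegative.
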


\begin{proof}
We use the ideas of the proof of~Lemma~11.1.1,~Ref.~\cite{Groechenig}, as well as \prettyref{lemma:subconvolution1} and define the set $N_y:=\{x\in\R:|x-y|\leq\frac{|y|}{2}\}$. For all $x\in N_y$ it follows that $\abs{x}\geq\frac{\abs{y}}{2}$ and thus
\begin{equation*}
(1+|x|)^{-k} \leq \left(1+\frac{|y|}{2}\right)^{-k} \leq 2^k(1+|y|)^{-k}.
\end{equation*}
On the other hand if $x\in N_y^c$ one has $(1+|x-y|)^{-k}\leq(1+\frac{|y|}{2})^{-k}$. Hence, with \prettyref{lemma:subconvolution1} we can derive
\begin{align*}
&\integral{\R}{(1+|x|)^{-k}(1+|x-y|)^{-k}(1+|\lambda x-z|)^{-k}}{x} \\
&\hspace{1cm} = \biggl(\int\limits_{N_y}+\int\limits_{N_y^c}\biggr)(1+|x|)^{-k}(1+|x-y|)^{-k}(1+|\lambda x-z|)^{-k}\,\mathrm{d}x \\
&\hspace{1cm} \lesssim (1+|y|)^{-k}\integral{\R}{(1+|x-y|)^{-k}\left(1+|\lambda|\left|x-\frac{z}{\lambda}\right|\right)^{-k}}{x} \\
&\hspace{3cm}+ (1+|y|)^{-k}\integral{\R}{(1+|x|)^{-k}\left(1+|\lambda|\left|x-\frac{z}{\lambda}\right|\right)^{-k}}{x} \\
&\hspace{1cm} \lesssim (1+|y|)^{-k}\max\{1,|\lambda|\}^{-1}\left(1+\min\{1,|\lambda|\}\left|y-\frac{z}{\lambda}\right|\right)^{-k} \\
&\hspace{3cm}+(1+|y|)^{-k}\max\{1,|\lambda|\}^{-1}\left(1+\min\{1,|\lambda|\}\left|\frac{z}{\lambda}\right|\right)^{-k},
\end{align*}
which concludes the proof.
\end{proof}

Now we are able to prove that the integrability condition on the kernel function is satisfied, i.e. that $R_\frF \in \cA_{q,{m_{v_{r}}}}$.

\begin{theorem}\label{theorem:frameinaq}
Let $\Psi \in L_1(\R^d)\cap L_2(\R^d)$ be an admissible shearlet with \[ \supp \hat{\Psi} \subseteq ([-a_1,-a_0]\cup [a_0,a_1])\times Q_b. \] Let $\Phi \in L_1(\R^d)\cap L_2(\R^d)$ be chosen as in \prettyref{remark:choicePhi} so that condition \prettyref{eq:assumptions} is satisfied for $0 < a_0 < a_1$ and $b \in \R_+^{d-1}$ and additionally $\hat\Phi\in\mathscr{C}^\infty_0(\R^d)$.
Then, for every $q>1$ the kernel $R_\frF$ fulfills \[ R_\frF \in \mathcal{A}_{q,{m_{v_r}}}. \] 
\end{theorem}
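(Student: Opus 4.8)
The plan is to bound the explicit expression for the weighted kernel norm furnished by \prettyref{lemma:kernel} and to reduce the two directions of the $\cA_q$-norm to a single one. Since $R_\frF(x,y) = \overline{R_\frF(y,x)}$ and the moderate weight is symmetric, $m_{v_r}(x,y) = m_{v_r}(y,x)$, the product $|R_\frF(x,y)m_{v_r}(x,y)|$ is symmetric in its arguments, so the quantity $\esssup_y \int_X |R_\frF(x,y)m_{v_r}(x,y)|^q \, \mathrm{d}\mu(x)$ agrees, after relabelling, with the one evaluated in \prettyref{lemma:kernel}. Fixing $q > 1$, it therefore suffices to show that the right-hand side of the identity in \prettyref{lemma:kernel} is finite; this splits the work into estimating four families of integrals, built respectively from $|\ip{\Phi}{\psi_{(\infty,s',t')}}|^q$ (generator--generator), $|\ip{\Phi}{\psi_{(a,s',t')}}|^q$ and $|\ip{\Phi}{\psi_{(\alpha,s',t')}}|^q$ (generator--shearlet), and $|\ip{\Psi}{\psi_{(a',s',t')}}|^q$ (shearlet--shearlet), each carrying a power of $m_{v_r}$.

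First I would extract pointwise decay estimates for these four correlations. By \prettyref{lemma:convolution} each of them is a convolution of two bandlimited Schwartz functions evaluated at the translation variable $t'$, hence a Schwartz function of $t'$; passing to the Fourier domain and integrating by parts against the smooth, compactly supported symbols $\hat\Phi$ and $\hat\Psi$ yields decay of arbitrarily high order $k$ in $t'$, with the anisotropic dilation producing a scaled argument in the first coordinate and the shearing coupling $t'_1$ into the remaining coordinates. I would record these bounds in the product form $(1+\lambda|\cdot|)^{-k}$ demanded by \prettyref{lemma:subconvolution1} and \prettyref{lemma:subconvolution2}, keeping the scale-dependent factors $\lambda$ explicit.

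Next I would dispose of the weight. The decisive point is that \prettyref{lemma:param} forces the correlations to vanish unless the scale (or scale ratio) lies in a fixed compact subset of $\R^\ast$ bounded away from $0$ and $\infty$: \prettyref{list:paramphipsi} gives $|a|\geq a_0/a_1$ for the mixed terms and \prettyref{list:parampsipsi} gives $|a'|\in[a_0/a_1,a_1/a_0]$ for the shearlet--shearlet term, while the generator--generator term lives entirely at $\alpha=\infty$, where $m_{v_r}=1$ by \eqref{eq:propertyweight1}. Combined with \eqref{eq:propertyweight2} and \eqref{eq:propertyweight3}, this bounds $m_{v_r}$ by a constant depending only on $r$, $a_0$ and $a_1$ on the support of each integrand, so the weight may be absorbed into the implied constant; \prettyref{list:paramphipsi} and \prettyref{list:paramphiphi} additionally confine $s'$ to a compact cube in the mixed and generator--generator terms.

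Finally I would carry out the integrations. Having removed the weight, each term is an integral over $t'$, $s'$ and the scale of a product of decay factors. I would integrate first in $t'$, applying \prettyref{lemma:subconvolution2} to absorb the scaled translation argument coming from the dilation and \prettyref{lemma:subconvolution1} for the remaining factors, which produces decay in $s'$ and in the translation differences; then integrate in $s'$ by the same convolution estimates; and finally integrate in the scale variable over the compact set isolated above, where the measure $\mathrm{d}a'/|a'|^{d+1}$ is trivially integrable. Because all substitutions made in \prettyref{lemma:kernel} are mere translations in $\sigma$ and $\tau$, the resulting bound does not depend on $(\sigma,\tau)$, so the essential suprema are finite and hence $R_\frF\in\cA_{q,m_{v_r}}$. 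The main obstacle is the first step: producing the decay estimates for the shearlet correlations in exactly the coupled product form required by \prettyref{lemma:subconvolution1} and \prettyref{lemma:subconvolution2} while keeping honest track of the scale-dependent constants, since the shearing prevents the $t'$-integration from factorizing across coordinates.
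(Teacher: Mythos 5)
Your overall architecture matches the paper's: reduce to one of the two suprema by symmetry of $\abs{R_\frF\, m_{v_r}}$, invoke \prettyref{lemma:kernel}, use \prettyref{lemma:param} to bound the scale ratios away from $0$ so that $m_{v_r}$ is absorbed into constants, and treat the mixed and shearlet--shearlet summands over compact $(a,s')$-domains. But there is a genuine gap in your treatment of the generator--generator term $\int_{\R^d}\int_{\R^{d-1}}\abs{\ip{\Phi}{\psi_{(\infty,s',t')}}}^q\,\mathrm{d}s'\,\mathrm{d}t'$, and it rests on a false claim: \prettyref{lemma:param}~\prettyref{list:paramphiphi} does \emph{not} confine $s'$ to a compact cube. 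The set $\Omega_s$ is a region in the \emph{frequency} variable, and it is nonempty for every $s$ (it always contains the points of $[-a_1,a_1]\times Q_b$ with $\abs{x_1}$ small), so $\hat\Phi\,\hat\Phi(S_{s'}^T\cdot)\not\equiv 0$ for arbitrarily large $s'$ and the correlation $\ip{\Phi}{\psi_{(\infty,s',t')}}$ does not vanish outside any cube in $s'$. What $\Omega_{s'}$ does is become \emph{thin} as $\normw{s'}$ grows, and extracting quantitative decay in $s'$ from that thinness is precisely the hard part of the proof, which your plan skips.

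Moreover, the fallback route you describe --- spatial decay of $\Phi$, then $t'$-integration via \prettyref{lemma:subconvolution1} and \prettyref{lemma:subconvolution2} producing decay in $s'$ --- is quantitatively insufficient on its own. Carrying it out gives $\int_{\R^d} I_{s'}(t')^{q}\,\mathrm{d}t' \lesssim \max\{1,\normw{s'}_\infty\}^{1-q}$, and $\int_{\R^{d-1}}(1+\normw{s'})^{1-q}\,\mathrm{d}s'$ diverges whenever $1<q\leq d$; so this argument proves the claim only for $q>d$, not for every $q>1$. The missing idea is the paper's splitting $q=q_0+q_1$ with $0<q_0<1$: the spatial factor is estimated by $(1+\normw{s'})^{1-q_0}$ via the inductive convolution bound, while the remaining factor is estimated in the Fourier domain by the weighted measure of $\Omega_{s'}$, using that $\abs{\omega_1}\lesssim \abs{s'}_\infty^{-1}$ on $\Omega_{s'}$ to obtain $I_1(s')\lesssim(1+\normw{s'})^{-dq_1}$. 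Choosing $q_0=\frac{q-1}{d}$ and $q_1=\frac{d-1}{d}q+\frac1d$ makes $q_0+dq_1>d$ for every $q>1$, which is what saves the $s'$-integration. Without this two-factor splitting your proof cannot close for the stated range of $q$.
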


\begin{proof}
For $q>1$ fixed we use \prettyref{lemma:kernel} and look at the four summands in \prettyref{eq:kernel} independently.
We need to show that all summands are bounded and for that we use \prettyref{lemma:param}.
Let $\tilde\alpha:=\alpha a$ and by using \prettyref{lemma:param} \prettyref{list:parampsipsi} with the specific weight $v_r$ we obtain
\begin{align}\label{eq:frameinaq13}
&\esssup_{(\alpha,\sigma,\tau)\in X}\int\limits_X\abs{R_\frF((\alpha,\sigma,\tau),(a,s,t))}^q m_{v_r}(\alpha,a)^q\,\mathrm{d}\mu(a,s,t) \notag\\
&\hspace{.5cm} =\max\Biggl\{ \int\limits_{\R^{d}}\int\limits_{\R^{d-1}}\biggl(\abs{\ip{\Phi}{\psi_{(\infty,s,t)}}}^q+\int\limits_{-1}^1\abs{a}^{-rq}\abs{\ip{\Phi}{\psi_{(a,s,t)}}}^q\,\frac{\mathrm{d}a}{\abs{a}^{d+1}}\biggr)\,\mathrm{d}s\,\mathrm{d}t, \notag\\
&\hspace{2.5cm} \esssup_{\alpha\in[-1,1]^\ast}\int\limits_{\R^d}\int\limits_{\R^{d-1}}\biggl(\abs{\alpha}^{-rq}\abs{\ip{\Phi}{\psi_{(\alpha,s,t)}}}^q \\
&\hspace{3.5cm} +\int\limits_{-\abs{\alpha}^{-1}}^{\abs{\alpha}^{-1}}\max\left\{\frac{\abs{\alpha}}{\abs{\tilde\alpha}},\frac{\abs{\tilde\alpha}}{\abs{\alpha}}\right\}^{-rq}\abs{\ip{\Psi}{\psi_{(a,s,t)}}}^q\,\frac{\mathrm{d}a}{\abs{a}^{d+1}}\biggr)\,\mathrm{d}s\,\mathrm{d}t\Biggr\}. \notag
\end{align}
We need to show that all four summands of \prettyref{eq:frameinaq13} are bounded and for this we will treat the summands independently.\par 
First, since $\cF(f^\ast)=\overline{\cF(f)}$ and \mbox{$\Phi\ast\psi_{(a,s,0)}^\ast\in L_1(\R^d)$} we obtain
\begin{align*}
\ip{\Phi}{\psi_{(a,s,t)}} = (\Phi\ast\psi_{(a,s,0)}^\ast)(t) = \cF^{-1}(\cF(\Phi\ast\psi_{(a,s,0)}^\ast))(t) = \cF^{-1}(\hat\Phi\overline{\cF(\psi_{(a,s,0)})})(t)
\end{align*}
which leads to
\begin{align*}
\int\limits_{R^d}\abs{\ip{\Phi}{\psi_{(a,s,t)}}}^q\,\mathrm{d}t = \norm{\cF^{-1}(\hat\Phi\overline{\cF(\psi_{(a,s,0)})})}{L_q}^q.
\end{align*}
Applying \prettyref{lemma:param} \prettyref{list:paramphipsi} we see that $\hat\Phi\cF(\psi_{(a,s,0)})\equiv 0$ for all $s\notin Q_{d_2}$ or $a\notin[-1,-\frac{a_0}{a_1}]\cup[\frac{a_0}{a_1},1]$, which implies \[ \norm{\cF^{-1}(\hat\Phi\overline{\cF(\psi_{(a,s,0)})})}{L_q}^q=0 \] for all $s\notin Q_{d_2}$ or $a\notin[-1,-\frac{a_0}{a_1}]\cup[\frac{a_0}{a_1},1]$. Thus, with \prettyref{lemma:param} \prettyref{list:paramphipsi} we derive
\begin{align}\label{eq:frameinaq1}
&\esssup_{\alpha\in[-1,1]^\ast}\int\limits_{\R^d}\int\limits_{\R^{d-1}}\abs{\alpha}^{-rq}\abs{\ip{\Phi}{\psi_{(\alpha,s,t)}}}^q\,\mathrm{d}s\,\mathrm{d}t \notag\\
&\hspace{2cm} =\esssup_{\alpha\in[-1,1]^\ast}\abs{\alpha}^{-rq}\int\limits_{\R^{d-1}}\norm{\cF^{-1}(\hat\Phi\overline{\cF(\psi_{(\alpha,s,0)})})}{L_q}^q\,\mathrm{d}s \notag\\
&\hspace{2cm} =\esssup_{\alpha\in[-1,-\frac{a_0}{a_1}]\cup[\frac{a_0}{a_1},1]}\abs{\alpha}^{-rq}\int\limits_{Q_{d_2}}\norm{\Phi\ast\psi_{(\alpha,s,0)}^\ast}{L_q}^q\,\mathrm{d}s < \infty.
\end{align}
Using the same arguments as well as \prettyref{lemma:param} \prettyref{list:parampsipsi} we obtain
\begin{align}\label{eq:frameinaq2}
&\esssup_{\alpha\in[-1,1]^\ast}\int\limits_{\R^d}\int\limits_{\R^{d-1}}\int\limits_{-\abs{\alpha}^{-1}}^{\abs{\alpha}^{-1}}\max\left\{\frac{\abs{\alpha}}{\abs{\tilde\alpha}},\frac{\abs{\tilde\alpha}}{\abs{\alpha}}\right\}^{-rq}\abs{\ip{\Psi}{\psi_{(a,s,t)}}}^q\,\frac{\mathrm{d}a}{\abs{a}^{d+1}}\,\mathrm{d}s\,\mathrm{d}t \notag\\
& \leq \int\limits_{\R}\max\left\{\abs{a},\abs{a}^{-1}\right\}^{-rq}\int\limits_{\R^{d-1}}\norm{\cF^{-1}(\hat\Phi\overline{\cF(\psi_{(a,s,0)})})}{L_q}^q\,\mathrm{d}s\,\frac{\mathrm{d}a}{\abs{a}^{d+1}} \notag\\
& = \biggl(\int\limits_{-\frac{a_1}{a_0}}^{-\frac{a_0}{a_1}}+\int\limits_{\frac{a_0}{a_1}}^{\frac{a_1}{a_0}}\biggr)\max\left\{\abs{a},\abs{a}^{-1}\right\}^{-rq}\int\limits_{Q_{d_1}}\norm{\Phi\ast\psi_{(a,s,0)}^\ast}{L_q}^q\,\mathrm{d}s\frac{\mathrm{d}a}{\abs{a}^{d+1}} < \infty.
\end{align}
Again, with analogous arguments and \prettyref{lemma:param} \prettyref{list:paramphipsi} it follows that
\begin{align}\label{eq:frameinaq3}
&\int\limits_{\R^{d}}\int\limits_{\R^{d-1}}\int\limits_{-1}^1\abs{a}^{-rq}\abs{\ip{\Phi}{\psi_{(a,s,t)}}}^q\,\frac{\mathrm{d}a}{\abs{a}^{d+1}}\,\mathrm{d}s\,\mathrm{d}t \notag\\
&\hspace{2cm} = \int\limits_{-1}^1\abs{a}^{-rq}\int\limits_{\R^{d-1}}\norm{\cF^{-1}(\hat\Psi\overline{\cF(\psi_{(a,s,0)})})}{L_q}^q\,\mathrm{d}s\,\frac{\mathrm{d}a}{\abs{a}^{d+1}} \notag\\
&\hspace{2cm} = \biggl(\int\limits_{-1}^{-\frac{a_0}{a_1}}+\int\limits_{\frac{a_0}{a_1}}^1\biggr)\abs{a}^{-rq}\int\limits_{Q_{d_2}}\norm{\Psi\ast\psi_{(a,s,0)}^\ast}{L_q}^q\,\mathrm{d}s\,\frac{\mathrm{d}a}{\abs{a}^{d+1}} < \infty.
\end{align}
For the last summand in \prettyref{eq:frameinaq13} we choose $q_0$, $q_1$ positive, such that $q_0+q_1=q$. We will specify the choice at the end of the proof. Then, it follows that
\begin{align}\label{eq:frameinaq4}
&\int\limits_{\R^{d}}\int\limits_{\R^{d-1}}\abs{\ip{\Phi}{\psi_{(\infty,s,t)}}}^q\,\mathrm{d}s\,\mathrm{d}t \notag\\
&\hspace{1cm} = \int\limits_{\R^{d-1}}\int\limits_{\R^d}\abs{(\Phi\ast\psi^\ast_{(\infty,s,0)}(t)}^{q_0+q_1}\,\mathrm{d}t\,\mathrm{d}s \notag\\
&\hspace{1cm} = \int\limits_{\R^{d-1}}\int\limits_{\R^d}\abs{(\Phi\ast\psi^\ast_{(\infty,s,0)})(t)}^{q_0}\abs{\mathcal{F}^{-1}(\hat\Phi\overline{\mathcal{F}(\psi_{(\infty,s,0)})})(t)}^{q_1}\,\mathrm{d}t\,\mathrm{d}s \notag\\
&\hspace{1cm} \lesssim \int\limits_{\R^{d-1}}\int\limits_{\R^d}\biggl(\int\limits_{\R^d}\abs{\Phi(x)\psi^\ast_{(\infty,s,0)}(x-t)}\,\mathrm{d}x\biggr)^{q_0}\,\mathrm{d}t\,\biggl(\int\limits_{\R^d}\abs{\hat\Phi(\omega)\mathcal{F}\psi^\ast_{(\infty,s,0)}(\omega)}\,\mathrm{d}\omega\biggr)^{q_1}\,\mathrm{d}s \notag\\
&\hspace{1cm} =: \int\limits_{\R^{d-1}}I_0(s) I_1(s)\,\mathrm{d}s.
\end{align}
In the following we will treat both factors $I_0$ and $I_1$ independently.
\par
\underline{$I_0(s)$:} We assume in the following $0<q_0<1$. Since $\hat\Phi\in\mathscr{C}_c^\infty(\R^d)$, for every $k\in\N$ it follows that $|\Phi(x)|\lesssim(1+|x|)^{-k}$ for all $x\in\R^d$ with the constant depending on $k$ and $d$. Then, 
\begin{align*}
I_0(s) &\lesssim \int\limits_{\R^d}\biggl(\int\limits_{\R^d}\prod_{i=1}^d\left[(1+|x_i+t_i|)^{-k}(1+|(S_{-s}x)_i|)^{-k}\right]\,\mathrm{d}x\biggr)\,\mathrm{d}t =: \int\limits_{\R^3} I_s(t)^{q_0} \,\mathrm{d}t
\end{align*}
for $s\in\R^{d-1}$ fixed and where $(S_{-s}x)_i$ denotes the $i$-th entry of the vector $S_{-s}x\in\R^d$. With this notation we intend to show
\begin{align}\label{eq:frameinaq5}
\int\limits_{\R^3}I_s(t)^{q_0}\,\mathrm{d}t \lesssim (1+\|s\|)^{1-q_0}\int\limits_{\R^d}\prod_{i=1}^d(1+|t_i|)^{-kq_0}\,\mathrm{d}t
\end{align}
with the constant depending on $k$ and $q_0$ only. For this we first show an auxiliary result for $d=3$ which we will then generalize to arbitrary dimensions. To illustrate our method we differentiate between the following four cases for $s\in\R^2$ with $s_1,s_2\neq 0$.
\par 
\underline{Case 1:} $|s_1|,|s_2|\leq 1$. With \prettyref{lemma:subconvolution1} and \prettyref{lemma:subconvolution2} we obtain
\begin{align}\label{eq:frameinaq6}
I_s(t) &\lesssim \int\limits_{\R^2}(1+|t_1+s_1x_2+s_2x_3|)^{-k}(1+|x_2+t_2|)^{-k}\notag\\
&\qquad\qquad\cdot(1+|x_2|)^{-k}(1+|x_3+t_3|)^{-k}(1+|x_3|)^{-k}\,\mathrm{d}(x_2,x_3) \notag\\
&\lesssim\int\limits_{\R}(1+|t_2|)^{-k}(1+|-s_1t_2+t_1+s_2x_3|)^{-k}(1+|x_3+t_3|)^{-k}(1+|x_3|)^{-k}\,\mathrm{d}x_3 \notag\\
&\hspace{1cm} + \int\limits_{\R}(1+|t_2|)^{-k}(1+|t_1+s_2x_3|)^{-k}(1+|x_3+t_3|)^{-k}(1+|x_3|)^{-k}\,\mathrm{d}x_3 \notag\\
&\lesssim (1+|t_2|)^{-k}(1+|t_3|)^{-k}\big[(1+|s_2t_3+s_1t_2+t_1|)^{-k} \notag\\
&\hspace{1cm}+(1+|s_1t_2-t_1|)^{-k}+(1+|s_2t_3+t_1|)^{-k}+(1+|t_1|)^{-k}\big].
\end{align}
\par 
\underline{Case 2:} $|s_1|\leq 1,|s_2|>1$. Again, with \prettyref{lemma:subconvolution1} and \prettyref{lemma:subconvolution2} we obtain
\begin{align}\label{eq:frameinaq7}
I_s(t) &\lesssim \int\limits_{\R}(1+|t_2|)^{-k}(1+|-s_1t_2+t_1+s_2x_3|)^{-k}(1+|x_3+t_3|)^{-k}(1+|x_3|)^{-k}\,\mathrm{d}x_3 \notag\\
&\hspace{1cm} + \int\limits_{\R}(1+|t_2|)^{-k}(1+|t_1+s_2x_3|)^{-k}(1+|x_3+t_3|)^{-k}(1+|x_3|)^{-k}\,\mathrm{d}x_3 \notag\\
&\lesssim |s_2|^{-1}(1+|t_2|)^{-k}(1+|t_3|)^{-k}\big[(1+|-t_3+s_1s_2^{-1}t_2-s_2^{-1}t_1|)^{-k} \\
&\hspace{1cm} +(1+|s_1s_2^{-1}t_2-s_2^{-1}t_1|)^{-k}+(1+|t_3+s_2^{-1}t_1|)^{-k}+(1+|s_2^{-1}t_1|)^{-k}\big]. \notag
\end{align}
\par 
\underline{Case 3:} $|s_1|>1,|s_2|\leq|s_1|$. Similarily we apply \prettyref{lemma:subconvolution1} and \prettyref{lemma:subconvolution2} to derive
\begin{align}\label{eq:frameinaq8}
I_s(t) &\lesssim \int\limits_{\R^2}(1+|t_1+s_1x_2+s_2x_3|)^{-k}(1+|x_2+t_2|)^{-k}\notag\\
&\qquad\qquad\qquad\qquad\cdot(1+|x_2|)^{-k}(1+|x_3+t_3|)^{-k}(1+|x_3|)^{-k}\,\mathrm{d}(x_2,x_3) \notag\\
&\lesssim \int\limits_{\R}(1+|t_2|)^{-k}|s_1|^{-1}(1+|-t_2+s_1^{-1}t_1+s_1^{-1}s_2x_3|)^{-k}\notag\\
&\qquad\qquad\qquad\qquad\cdot(1+|x_3+t_3|)^{-k}(1+|x_3|)^{-k}\,\mathrm{d}x_3 \notag\\
&\hspace{-.5cm} +\int\limits_{\R}(1+|t_2|)^{-k}|s_1|^{-1}(1+|s_1^{-1}t_1+s_1^{-1}s_2x_3|)^{-k}(1+|x_3+t_3|)^{-k}(1+|x_3|)^{-k}\,\mathrm{d}x_3 \notag\\
&\lesssim |s_1|^{-1}(1+|t_2|)^{-k}(1+|t_3|)^{-k}\big[(1+|-s_1^{-1}s_2t_3-t_2+s_1^{-1}t_1|)^{-k} \notag\\
&\hspace{.5cm} +(1+|-t_2+s_1^{-1}t_1|)^{-k}+(1+|-s_1^{-1}s_2t_3+s_1^{-1}t_1|)^{-k}+(1+|s_1^{-1}t_1|)^{-k}\big]. 
\end{align}
\par 
\underline{Case 4:} $|s_1|>1,|s_2|>|s_1|$. Finally we apply \prettyref{lemma:subconvolution1} and \prettyref{lemma:subconvolution2} again and conclude
\begin{align}\label{eq:frameinaq9}
I_s(t) &\lesssim \int\limits_{\R}(1+|t_2|)^{-k}|s_1|^{-1}(1+|-t_2+s_1^{-1}t_1+s_1^{-1}s_2x_3|)^{-k}\notag\\
&\qquad\qquad\qquad\qquad\qquad\qquad\cdot(1+|x_3+t_3|)^{-k}(1+|x_3|)^{-k}\,\mathrm{d}x_3 \notag\\
&\hspace{-.5cm} +\int\limits_{\R}(1+|t_2|)^{-k}|s_1|^{-1}(1+|s_1^{-1}t_1+s_1^{-1}s_2x_3|)^{-k}(1+|x_3+t_3|)^{-k}(1+|x_3|)^{-k}\,\mathrm{d}x_3 \notag\\
&\lesssim |s_2|^{-1}(1+|t_2|)^{-k}(1+|t_3|)^{-k}\big[(1+|-t_3-s_1s_2^{-1}t_2+s_2^{-1}t_1|)^{-k} \\
&\hspace{.5cm} +(1+|-s_1s_2^{-1}t_2+s_2^{-1}t_1|)^{-k}+(1+|-t_3+s_2^{-1}t_1|)^{-k}+(1+|s_2^{-1}t_1|)^{-k}\big].  \notag
\end{align}
\par 
The four cases \prettyref{eq:frameinaq6}, \prettyref{eq:frameinaq7}, \prettyref{eq:frameinaq8}, \prettyref{eq:frameinaq9} yield the estimate
\begin{equation}\label{eq:induktionsanfang}
I_s(t) \lesssim \abs{\det A_s^i}\sum_{i=1}^4\prod_{j=1}^3(1+\abs{(A_s^i t)_j})^{-k}
\end{equation}
with the Matrices $A_s^i$, $s\in\R^2$, $i=1,\ldots,4$, being of the form
\begin{align*}
A_s^i =
\begin{pmatrix}
\lambda & \mu & \nu \\
0 & 1 & 0 \\
0 & 0 & 1
\end{pmatrix}
\quad \mbox{for some } \lambda,\mu,\nu\in\R \mbox{ depending on } s_1,s_2.
\end{align*}
In particular it follows from the four cases that
\begin{align*}
\abs{\det A_s^i} = |\lambda| = \left\{
\begin{array}{ll}
1, & |s_1|,|s_2|\leq 1, \\
|s_2|^{-1}, & |s_1|\leq 1,|s_2|>1, \\
|s_1|^{-1}, & |s_1|>1,|s_2|\leq|s_1|, \\
|s_2|^{-1}, & |s_1|>1,|s_2|>|s_1|
\end{array}
\right\} = \max\{1,|s_1|,|s_2|\}^{-1}.
\end{align*}
\par
We now intend to show, that this result holds for arbitrary dimension. To this extend we fix $d\geq 3$ as well as $s\in\R^{d-1}$ with $s_i\neq 0$ for all $i=1,\ldots,d-1$ and assume that there exist matrices $A_s^i$ for $1\leq i\leq 2^{d-1}$ of the form
\begin{align}\label{eq:matrixform}
A_s^i=
\begin{pmatrix}
\ast & \ast & \cdots & \ast \\
 & 1 & & \\
 & & \ddots & \\
 & & & 1
\end{pmatrix}
\end{align}
with $\det A_s^i = (A_s^i)_{11}=\max\{1,|s_1|,\ldots,|s_{d-1}|\}^{-1}=\min\{1,|s_1|^{-1},\ldots,|s_{d-1}|^{-1}\}=:\min(s)$. Assume the estimate
\begin{align}\label{eq:induktionsvoraussetzung}
I_s(t) \lesssim \min(s)\sum_{i=1}^{2^{d-1}}\prod_{j=1}^d(1+|(A^i_s t)_j|)^{-k}
\end{align}
holds true for fixed $d$. As shown in \eqref{eq:induktionsanfang}, this readily is the case for $d=3$. We now intend to show that the estimate \eqref{eq:induktionsvoraussetzung} also holds for $d+1$. Then, \eqref{eq:induktionsvoraussetzung} will hold for arbitrary dimension by full induction over the dimension. To this end we fix $s\in\R^d$ with $s_i\neq 0$ for all $i=1,\ldots,d$ and define $\tilde x:=(x_1,\ldots,x_d)$, $\tilde s:=(s_1,\ldots,s_{d-1})$, $\tilde t:=(t_1,\ldots,t_d)$ and $u:=(t_1+s_d x_{d+1},t_2,\ldots,t_d)$. Then we deduce from \eqref{eq:induktionsvoraussetzung} the estimate
\begin{align}
I_s(t) &= \int\limits_{\R^{d+1}}\prod_{i=1}^{d+1}\left[(1+|x_i+t_i|)^{-k}(1+|(S_{-s}x)_i|)^{-k}\right]\,\mathrm{d}x \notag\\
&= \int\limits_\R(1+|x_{d+1}+t_{d+1}|)^{-k}(1+|x_{d+1}|)^{-k}\notag\\
&\hspace{2cm}\biggl(\int\limits_{R^d}\prod_{i=1}^d\left[(1+|x_i+u_i|)^{-k}(1+|(S_{-\tilde s}\tilde{x})_i|)^{-k}\right]\,\mathrm{d}\tilde{x}\biggr)\,\mathrm{d}x_{d+1} \notag\\
&= \int\limits_\R I_{\tilde{s}}(u)(1+|x_{d+1}+t_{d+1}|)^{-k}(1+|x_{d+1}|)^{-k}\,\mathrm{d}x_{d+1} \notag\\
&\lesssim \min(\tilde{s})\sum_{i=1}^{2^{d-1}}\prod_{j=1}^d\int\limits_\R (1+|(A^i_{\tilde{s}}u)_j|)^{-k}(1+|x_{d+1}+t_{d+1}|)^{-k}(1+|x_{d+1}|)^{-k}\,\mathrm{d}x_{d+1}, \label{eq:induktion1}
\end{align}
whereby we remember $(S_{-s}x)_i=x_i$ for all $i=2,\ldots,d+1$ and $(A^i_{\tilde{s}}u)_j=u_j$ for all $j=2,\ldots,d$. Since all integrals for $j\neq 1$ will remain unchanged we are now interested in the integrals in \eqref{eq:induktion1} for arbitrary $1\leq i\leq 2^{d-1}$, $j=1$ and obtain with \prettyref{lemma:subconvolution2}
\begin{align*}
&\int\limits_\R (1+|(A^i_{\tilde{s}}u)_1|)^{-k}(1+|x_{d+1}+t_{d+1}|)^{-k}(1+|x_{d+1}|)^{-k}\,\mathrm{d}x_{d+1} \\
&= \int\limits_\R (1+|(A^i_{\tilde{s}}\tilde{t})_1+\min(\tilde{s})s_d x_{d+1}|)^{-k}(1+|x_{d+1}+t_{d+1}|)^{-k}(1+|x_{d+1}|)^{-k}\,\mathrm{d}x_{d+1} \\
&\lesssim \min(\tilde{s})(1+|t_{d+1}|)^{-k}\max\{1,|\min(\tilde{s})s_d|\}^{-1} \\
&\hspace{2cm}\times\biggl[ \left( 1+ \left| \min\{1,|\min(\tilde{s})s_d|\}t_{d+1} - \frac{\min\{1,|\min(\tilde{s})s_d|\}}{|\min(\tilde{s})s_d|}(A_{\tilde{s}}^i \tilde{t})_1\right|\right)^{-k} \\
&\hspace{4cm} +\left(1+\frac{\min\{1,|\min(\tilde{s})s_d|\}}{|\min(\tilde{s})s_d|}|(A_{\tilde{s}}^i \tilde{t})_1|\right)^{-k}\,\biggr] \\
&= \max\{\min(\tilde{s})^{-1},|s_d|\}^{-1}\\
&\hspace{1cm}\cdot\left[(1+|(B^i_s t)_{d+1}|)^{-k}(1+|(B^i_s t)_1|)^{-k}+(1+|(C^i_s t)_{d+1}|)^{-k}(1+|(C^i_s t)_1|)^{-k}\right]
\end{align*}
for some matrices $B_s^i$, $C_s^i$ of the form \eqref{eq:matrixform} where
\begin{equation*}
(B_s^i)_{11}=(C_s^i)_{11}=(A_{\tilde{s}}^i)_{11}\left(\frac{\min\{1,|\min(\tilde{s})s_d|\}}{|\min(\tilde{s})s_d|}\right)=\min\{|s_d|^{-1},\min(\tilde{s})\}=\min(s).
\end{equation*}
Since $\max\{\min(\tilde{s})^{-1},|s_d|\}^{-1}=\max\{1,|s_1|,\ldots,|s_d|\}^{-1}=\min(s)$ we derive together with \eqref{eq:induktion1} the estimate \eqref{eq:induktionsvoraussetzung} for $d+1$. Hence, \eqref{eq:induktionsvoraussetzung} holds true for arbitrary dimension.
\par
With this at hand we return to arbitrary dimension $d$ and further deduce
\begin{align*}
\abs{\det A_s^i}^{-1} = \max\{1,|s_1|,\ldots,|s_{d-1}|\} \leq 1+\max\{|s_1|,\ldots,|s_{d-1}|\} \lesssim 1+\|s\|.
\end{align*}
Now we can prove the following estimate for $0<q_0<1$ and almost every $s\in\R^d$:
\begin{align*}
\int\limits_{\R^3}I_s(t)^{q_0}\,\mathrm{d}t &\lesssim \abs{\det A_s^i}^{q_0}\int\limits_{\R^d}\bigg(\sum_{i=1}^{2^{d-1}}\prod_{j=1}^d(1+\abs{(A_s^i t)_j})^{-k}\bigg)^{q_0}\,\mathrm{d}t \\
&\leq \abs{\det A_s^i}^{q_0}\sum_{i=1}^{2^{d-1}}\int\limits_{\R^d}\prod_{j=1}^d(1+|(A_s^i t)_j|)^{-kq_0}\,\mathrm{d}t \\
&\lesssim \abs{\det A_s^i}^{q_0-1}\int\limits_{\R^d}\prod_{j=1}^d(1+|t_j|)^{-kq_0}\,\mathrm{d}t \\
&\lesssim (1+\|s\|)^{1-q_0}\int\limits_{\R^d}\prod_{j=1}^d(1+|t_j|)^{-kq_0}\,\mathrm{d}t,
\end{align*}
which shows \prettyref{eq:frameinaq5}.
\par 
\underline{$I_1(s)$:} We shall now deal with the second factor in \prettyref{eq:frameinaq3} for $q_1>0$.
By \prettyref{lemma:param} \prettyref{list:paramphiphi} and the definition of $\hat\Phi$ we obtain
\begin{align*}
I_1(s)^{1/q_1} &= \int\limits_{\R^d}\abs{\hat\Phi(\omega)\hat\Phi(S_s^T\omega)}\,\mathrm{d}\omega\\
& \leq \int\limits_{\Omega_s}\abs{\omega_1}^{d-1}\biggl(\integral{\R}{\frac{\abs{\hat\Psi(\xi_1,\tilde\omega)}^2}{\abs{\xi_1}^d}}{\xi_1}\biggr)^\frac12 \biggl(\integral{\R}{\frac{\abs{\hat\Psi(\xi_1,\widetilde{S_s^T\omega})}^2}{\abs{\xi_1}^d}}{\xi_1}\biggr)^\frac12\,\mathrm{d}\omega
\end{align*}
with $\Omega_s=\{x\in\R^d:|x_1|\leq a_1,\max\{-b_i,-b_i-s_{i-1}x_1\}\leq x_i\leq\min\{b_i,b_i-s_{i-1}x_1\},i=2,\ldots d\}$.
Since $\hat\Psi$ is compactly supported and continuous, we conclude
\begin{align}\label{eq:frameinaq10}
I_1(s)^{1/q_1} \lesssim \integral{\Omega_s}{|\omega_1|^{d-1}}{\omega}.
\end{align}
In the following we assume $s>0$ componentwise, all other cases can be treated analogously by symmetry arguments.
Then, for any $\omega\in\Omega_s$ it follows from \prettyref{lemma:param} \prettyref{list:paramphiphi} that $|\omega_1|\leq 2b_is_{i-1}^{-1}$ for all $i=2,\ldots,d$, hence, \mbox{$|\omega_1|\lesssim(\max_{i=1,\ldots,d-1}s_i)^{-1}=|s|_\infty^{-1}$}.
Moreover, since $\omega\in\supp\hat\Phi$, we derive $-b_i\leq\omega_i\leq b_i$ for all $i=1,\ldots,d$.
We can now estimate \prettyref{eq:frameinaq10} in the following manner:
\begin{align*}
I_1(s)^{1/q_1} \lesssim \int\limits_{|\omega_1|\leq\min\{b_1,|s|_\infty^{-1}\}}|\omega_1|^{d-1}\,\mathrm{d}\omega_1.
\end{align*}
Assume first that $|s|_\infty^{-1}\geq b_1$, then we have
\begin{align*}
I_1(s)^{1/q_1} \lesssim \int\limits_{|\omega_1|\leq b_1}|\omega_1|^{d-1}\,\mathrm{d}\omega_1 \lesssim b_1^d \lesssim (1+\|s\|)^{-d}.
\end{align*}
On the other hand if $|s|_\infty^{-1}<b_1$ it follows that
\begin{align*}
I_1(s)^{1/q_1} \lesssim \int\limits_{|\omega_1|\leq|s|_\infty^{-1}}|\omega_1|^{d-1}\,\mathrm{d}\omega_1 \lesssim |s|_\infty^{-d} \lesssim (1+\|s\|)^{-d}.
\end{align*}
In both cases we obtain
\begin{align}\label{eq:frameinaq11}
I_1(s) \lesssim (1+\|s\|)^{-dq_1}.
\end{align}
\par
Plugging \prettyref{eq:frameinaq5} and \prettyref{eq:frameinaq11} into \prettyref{eq:frameinaq4} now yields
\begin{align}\label{eq:frameinaq12}
\int\limits_{\R^{d}}\int\limits_{\R^{d-1}}\abs{\ip{\Phi}{\psi_{(\infty,s,t)}}}^q\,\mathrm{d}s\,\mathrm{d}t &\lesssim \int\limits_{\R^{d-1}}I_0(s) I_1(s)\,\mathrm{d}s \notag\\
&\lesssim \int\limits_{\R^d}\prod_{i=1}^d(1+|t_i|)^{-kq_0}\,\mathrm{d}t\int\limits_{\R^{d-1}}(1+\|s\|)^{1-q_0-dq_1}\,\mathrm{d}s.
\end{align}
For any choice of $q_0$ we can find a $k\in\N$, such that the first integral in \prettyref{eq:frameinaq12} converges.
The second integral in \prettyref{eq:frameinaq12} is known to converge if and only if $q_0+dq_1>d$.
This can be obtained by setting $q_0=\frac{q-1}{d}$ and $q_1=\frac{d-1}{d}q+\frac{1}{d}$.
If $q>1$ this satisfies \[ q_0 + q_1 = \frac{q-1}{d} + \frac{d-1}{d} q + \frac1d = \frac1d (q-1+q(d-1)+1) = q \] and
\begin{align*}
q_0 + d q_1 &= \frac{q-1}{d} + (d-1)q + 1 = 1 + q\left(d-1+\frac1d\right)-\frac1d\\
&= d - \left( d-1+\frac1d\right) + q \left(d-1+\frac1d\right) = d + (q-1)\left( d-1+\frac1d\right) > d
\end{align*}
and we finally conclude
\begin{align*}
\int\limits_{\R^{d}}\int\limits_{\R^{d-1}}\abs{\ip{\Phi}{\psi_{(\infty,s,t)}}}^q\,\mathrm{d}s\,\mathrm{d}t < \infty.
\end{align*}
Altogether with \prettyref{eq:frameinaq1}, \prettyref{eq:frameinaq2} and \prettyref{eq:frameinaq3} we have now shown that all four summands in \prettyref{eq:frameinaq13} are bounded and this concludes the proof.
\end{proof}

At this point we intend to show that there exist functions $\hat{\Phi}$ satisfying the assumptions of \prettyref{theorem:frameinaq}.
Indeed we will show that we can find $\hat{\Psi}$ so that $\hat{\Phi}\in \mathscr{C}_0^\infty(\R^d)$.

\begin{example} \label{example:psiphi}
We fix any odd dimension $d$. Then, for $\xi=(\xi_1,\tilde\xi)$ let $\hat{\Psi}(\xi):= \hat{\psi_1}(\xi_1)\hat{\psi_2}(\tilde{\xi})$ with \[ \hat{\psi_1}(\xi_1) := \begin{cases}
\abs{\xi_1}^{\frac{d}{2}}e^{\frac{1}{(\xi_1 - 1)(\xi_1 - 3)}}, & 1 < \xi_1 < 3\\
\abs{\xi_1}^\frac{d}{2}e^{\frac{1}{(\xi_1 + 1)(\xi_1 + 3)}}, & -3 < \xi_1 < -1\\
0, &\text{otherwise}
\end{cases} \] and $\hat{\psi_2} \in \mathscr{C}^\infty_0(\R^{d-1})$ with $\hat\psi\geq 0$.
According to \prettyref{remark:choicePhi} we set
\begin{align*}
\hat\Phi(\xi) &:= \xi_1^{\frac{d-1}2}\biggl(\int\limits_{\R\setminus[-\abs{\xi_1},\abs{\xi_1}]}\frac{\abs{\hat\Psi(\omega_1,\tilde\xi)}^2}{\abs{\omega_1}^d}\,\mathrm{d}\omega_1\biggr)^{1/2} \\
&= \xi_1^\frac{d-1}{2}\abs{\hat\psi_2(\tilde\xi)}\biggl(2\int\limits_{\max \{\abs{\xi_1},1\}}^3e^{\frac{2}{(\omega_1 - 1)(\omega_1 - 3)}}\,\mathrm{d}\omega_1\biggr)^{1/2} =: \xi_1^\frac{d-1}{2}\abs{\hat\psi_2(\tilde\xi)}\hat\varphi_1(\xi_1)
\end{align*}
with $\hat\Phi(\xi)=0$ for $\abs{\xi_1}>3$.
Now we show that this function satisfies the required assumptions.
The fact that $\hat{\psi_1} \in \mathscr{C}^\infty_0(\R)$ and therefore $\hat{\Psi} \in \mathscr{C}^\infty_0(\R^d)$ is immediately obvious.
With the given construction, together with \prettyref{remark:choicePhi}, we see that the necessary condition from \prettyref{theorem:tightframe} is satisfied, i.e. the functions $\Phi$ and $\Psi$ constitute a Parseval frame.
Furthermore if we assume $\hat\Phi\in\mathscr{C}^\infty_0(\R^d)\subset\mathscr{S}(\R^d)$ then $\Phi\in\mathscr{S}(\R^d)\subset L_1(\R^d)\cap L_2(\R^d)$ and all necessary conditions on $\Phi$ are satisfied.

So we need to show that $\hat{\Phi} \in \mathscr{C}^\infty_0(\R^d)$, which means that we will show that $\hat{\varphi_1}$ is infinitely continuously differentiable since $\xi_1^\frac{d-1}{2}$ is a monomial.
To show this we need to prove that \[ \lim\limits_{x\nearrow 3} \frac{\mathrm{d}^n}{\mathrm{d}x^n} (\hat{\varphi_1}(x)) = 0 \] and \[ \lim\limits_{x\searrow 1} \frac{\mathrm{d}^n}{\mathrm{d}x^n} (\hat{\varphi_1}(x)) = 0 \] for all $n\in \N$.
Since both statements are proven in an analogous manner, we will only show the proof of the first statement and for the remainder of this example we assume $2 < x < 3$.
Since we have $\hat{\varphi_1}(x) = (f\circ g)(x)$ with $f(x) = \sqrt{x}$ and \[ g(x) = 2 \int\limits_{x}^3 e^{\frac{2}{(\omega - 1)(\omega - 3)}}\leb{\omega}, \]
we can use Fa\`a di Bruno's formula to get a closed expression for the n-th derivative.
Recall that for two functions $f$ and $g$ the identity
\begin{equation}\label{eq:faadibruno}
\frac{\mathrm{d}^n}{\mathrm{d}x^n}\bigl((f\circ g)(x)\bigr) = \sum\limits_{k=1}^n \frac{\mathrm{d}^k f}{\mathrm{d}x^k}(g(x)) B_{n,k}\Bigl(\frac{\mathrm{d}g}{\mathrm{d}x}(x),\frac{\mathrm{d}^2 g}{\mathrm{d}x^2}(x),\ldots,\frac{\mathrm{d}^{(n-k+1)} g}{\mathrm{d}x^{(n-k+1)}}(x)\Bigr)
\end{equation}
holds with $B_{n,k}$ being the Bell polynomials, i.e. \[ B_{n,k}(x_1,x_2,\ldots,x_{(n-k+1)}) = \sum \frac{n!}{j_1!\cdots j_{(n-k+1)}!} \Bigl(\frac{x_1}{1!}\Bigr)^{j_1}\cdots \Bigl(\frac{x_{(n-k+1)}}{(n-k+1)!}\Bigr)^{j_{(n-k+1)}}. \]
The sum in the above expression is taken over all $(j_1,\ldots,j_{(n-k+1)})$ with $j_1+\cdots +j_{(n-k+1)} = k$ and $j_1 + 2j_2 + \cdots + (n-k+1) j_{(n-k+1)} = n$.
The derivatives of the square root satisfy
\begin{equation*}
\frac{\mathrm{d}^k f}{\mathrm{d}x^k}(x) = c_k x^{-k+\frac12}
\end{equation*}
with $c_k$ being some constant and since because of $1<x<3$ we have
\begin{equation}\label{eq:derivg}
\frac{\mathrm{d}g}{\mathrm{d}x}(x) = -2 e^{\frac{2}{(x-1)(x-3)}}
\end{equation}
this means that for all $k \in \N$ the derivatives of $g$ satisfy \[ \frac{\mathrm{d}^k g}{\mathrm{d}x^k}(x) = Q_k(x) e^{\frac{2}{(x-1)(x-3)}} \] with $Q_k$ being some rational function without singularities in the interval $(1,3)$.
Thus, using~\prettyref{eq:faadibruno} we now have
\begin{align*}
\frac{\mathrm{d}^n \hat{\varphi_1}}{\mathrm{d}x^n}(x) &= \sum\limits_{k=1}^n c_k \bigl(g(x)\bigr)^{-k+\frac12} \sum\limits_{(j_1,\ldots,j_{(n-k+1)})} c_{n,k,j} \Bigl( Q_1(x)e^{\frac{2}{(x-1)(x-3)}} \Bigr)^{j_1} \cdots\\
&\hspace{5.5cm}\cdots \Bigl( Q_{(n-k+1)}(x)e^{\frac{2}{(x-1)(x-3)}} \Bigr)^{j_{(n-k+1)}}\\
&= \sum\limits_{k=1}^n R_{k,n}(x) \bigl(g(x)\bigr)^{-k+\frac12} \bigl( e^{\frac{2}{(x-1)(x-3)}} \bigr)^{k}\\
&= \sum\limits_{k=1}^n \biggl(\frac{\tilde{R}_{k,n}(x) \bigl(e^{\frac{2}{(x-1)(x-3)}}\bigr)^{1+\frac{1}{2k-1}}}{g(x)}\biggr)^{k-\frac12}
\end{align*}
where $R_{k,n}$ is a rational function for every $k=1,\ldots,n$ possibly changing from line to line and $\tilde{R}_{k,n}(x):= R_{k,n}(x)^{\frac{1}{k-\frac12}}$.
Since \[ \lim\limits_{x\nearrow 3} \tilde{R}_{k,n}(x) \bigl( e^{\frac{2}{(x-1)(x-3)}} \bigr)^{1+\frac{1}{2k-1}} = 0\quad\text{and}\quad \lim\limits_{x\nearrow 3} g(x) = 0 \] we use l'Hospital's rule to determine the limit of the fraction.
For the derivative of the numerator we obtain
\begin{align*}
\lefteqn{\frac{\mathrm{d}}{\mathrm{d}x}\Bigl(\tilde{R}_{k,n}(x) \bigl( e^{\frac{2}{(x-1)(x-3)}} \bigr)^{1+\frac{1}{2k-1}}\Bigr)}\\
&\hspace{1cm}= \frac{\mathrm{d}}{\mathrm{d}x} \tilde{R}_{k,n}(x) \bigl( e^{\frac{2}{(x-1)(x-3)}} \bigl)^{1+\frac{1}{2k-1}} + \tilde{R}_{k,n}(x) \frac{\mathrm{d}}{\mathrm{d}x} \bigl( e^{\frac{2}{(x-1)(x-3)}} \bigr)^{1+\frac{1}{2k-1}}\\
&\hspace{1cm}= Q(x) \bigl( e^{\frac{2}{(x-1)(x-3)}} \bigr)^{1+\frac{1}{2k-1}}
\end{align*}
where $Q$ is of the form $Q(x) = Q_2(x) (Q_1(x))^{\frac{-2k+3}{2k-1}} + Q_3(x)(Q_1(x))^{\frac{2}{2k-1}}$ with $Q_1,Q_2,Q_3$ being rational functions.
This, together with~\prettyref{eq:derivg}, yields
\begin{equation*}
\lim\limits_{x\nearrow 3} \frac{\frac{\mathrm{d}}{\mathrm{d}x}\Bigl( \tilde{R}_{k,n}(x) \bigl( e^{\frac{2}{(x-1)(x-3)}} \bigr)^{1+\frac{1}{2k-1}} \Bigr)}{\frac{\mathrm{d}}{\mathrm{d}x}\bigl(g(x)\bigr)} = \lim\limits_{x\nearrow 3} Q(x) e^{\frac{2}{(2k-1)((x-1)(x-3))}} = 0.
\end{equation*}
Thus, with l'Hospital's rule we get
\begin{align*}
\lim\limits_{x\nearrow 3} \frac{\mathrm{d}^n \hat{\varphi_1}}{\mathrm{d}x^n}(x) &= \lim\limits_{x\nearrow 3} \sum\limits_{k=1}^n \biggl(\frac{\tilde{R}_{k,n}(x) \bigl(e^{\frac{2}{(x-1)(x-3)}}\bigr)^{1+\frac{1}{2k-1}}}{g(x)}\biggr)^{k-\frac12}\\
&= \sum\limits_{k=1}^n \biggl( \lim\limits_{x\nearrow 3} \frac{\tilde{R}_{k,n}(x) \bigl(e^{\frac{2}{(x-1)(x-3)}}\bigr)^{1+\frac{1}{2k-1}}}{g(x)}\biggr)^{k-\frac12}\\
&= \sum\limits_{k=1}^n \biggl( \lim\limits_{x\nearrow 3} \frac{\frac{\mathrm{d}}{\mathrm{d}x}\Bigl(\tilde{R}_{k,n}(x) \bigl(e^{\frac{2}{(x-1)(x-3)}}\bigr)^{1+\frac{1}{2k-1}}\Bigr)}{\frac{\mathrm{d}}{\mathrm{d}x}\bigl(g(x)\bigr)}\biggr)^{k-\frac12} = 0.
\end{align*}
This proves that $\hat{\varphi_1} \in \mathscr{C}^{\infty}_0(\R)$ and therefore that $\hat{\Phi} \in \mathscr{C}^{\infty}_0(\R^d)$.
\end{example}

\subsection{Inhomogeneous shearlet coorbit spaces}\label{sec:inhomogeneousspaces}

Now we are able to give a definition of the coorbit spaces associated to our inhomogeneous shearlet frame with respect to the weighted Lebesgue spaces $L_{p,v_r}(X,\mu)$.

\begin{definition}\label{definition:shearletcoorbitspace}
Let the shearlet frame $\frF$ be chosen so that it satisfies the conditions in \prettyref{theorem:frameinaq}.
Then for $1\leq p<\infty$ and $1<\tau\leq 2$ with $p<\tau'$ the shearlet coorbit space with respect to the Lebesgue space $L_{p,v_{r}}(X,\mu)$ is defined as \[ \mathcal{SC}^{r}_{\frF,\tau,p} := \mathrm{Co}_{\frF,\tau}(L_{p,v_r}(X,\mu)) = \{ f \in (\mathcal{H}_{\tau,v_{r}})^\sim: \mathcal{SH}_{\frF,\tau} f \in L_{p,v_{r}}(X,\mu) \}. \]
It is endowed with the natural norm \[ \norm{f}{\mathcal{SC}^{r}_{\frF,\tau,p}} := \norm{\mathcal{SH}_{\frF,\tau} f}{L_{p,v_{r}}(X,\mu)}. \]
\end{definition}

These spaces are well-defined Banach spaces, which is implied by \prettyref{theorem:frameinaq}.

\begin{theorem}\label{theorem:banachspaces}
With the same assumptions as in \prettyref{theorem:frameinaq} the spaces $\mathcal{SC}^{r}_{\frF,\tau,p}$ are well-defined Banach spaces.
\end{theorem}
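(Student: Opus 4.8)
The plan is to assemble the abstract machinery of \prettyref{sec:coorbit_theory} with the concrete kernel estimate of \prettyref{theorem:frameinaq}, so that both the well-definedness and the completeness of $\mathcal{SC}^{r}_{\frF,\tau,p}=\mathrm{Co}_{\frF,\tau}(L_{p,v_r})$ reduce to checking the hypotheses of the relevant lemmas and of \prettyref{proposition:co_properties}. The central input is that, under the assumptions inherited from \prettyref{theorem:frameinaq}, the reproducing kernel satisfies $R_\frF\in\cA_{q,m_{v_r}}$ for every $q>1$; the existence of admissible $\Phi,\Psi$ with the extra regularity $\hat\Phi\in\mathscr{C}^\infty_0(\R^d)$ is guaranteed by \prettyref{example:psiphi}, and \prettyref{theorem:tightframe} ensures that $\frF$ is a continuous Parseval frame, so that the voice transform is well-defined.

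First I would settle the well-definedness of the underlying Gelfand triple. Since $1<\tau\leq 2$ we have $\tau>1$ and $\tau'\geq 2>1$, so \prettyref{theorem:frameinaq} yields in particular $R_\frF\in\cA_{\tau,m_{v_r}}$ and $R_\frF\in\cA_{\tau',m_{v_r}}$. By \prettyref{lemma:frame_subset} this gives $\frF\subset\cH_{\tau,v_r}$, so the space of test functions is non-trivial and dense in $\cH=L_2(\R^d)$, and by \prettyref{lemma:h_banach} the space $\cH_{\tau,v_r}$ is a Banach space. Hence the anti-dual $(\cH_{\tau,v_r})^\sim$ and the extended voice transform $V_{\frF,\tau}$ are well-defined, which makes \prettyref{definition:shearletcoorbitspace} meaningful. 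The hypotheses of \prettyref{lemma:equivalent_norm} and \prettyref{lemma:h_properties}, all of which require only $R_\frF\in\cA_{\tau,m_{v_r}}\cap\cA_{\tau',m_{v_r}}$, are covered as well.

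Next I would verify the single standing hypothesis $R_\frF(L_{p,v_r})\subset L_{\tau',\frac{1}{v_r}}$ of \prettyref{proposition:co_properties}. For $1<p<\tau'$ this is exactly \prettyref{remark:assumption_fulfilled}: \prettyref{lemma:kernel_property}, applied with $R_\frF\in\cA_{q,m_{v_r}}$ for all $q>1$, gives $R_\frF(L_{p,v_r})\subset L_{\tau',v_r}$, and since $v_r\geq 1$ (because $\abs{\alpha}^{-r}\geq 1$ for $\abs{\alpha}\leq 1$ and $r\geq 0$, while $v_r(\infty)=1$) one has $\frac{1}{v_r}\leq 1\leq v_r$, whence $L_{\tau',v_r}\subset L_{\tau',\frac{1}{v_r}}$. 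With the hypothesis in hand, \prettyref{proposition:co_properties}~(ii) directly gives that $\mathcal{SC}^{r}_{\frF,\tau,p}$ is a Banach space.

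The main obstacle is the boundary case $p=1$ admitted by \prettyref{definition:shearletcoorbitspace}, which falls outside the range $1<p$ of \prettyref{lemma:kernel_property} and hence of \prettyref{remark:assumption_fulfilled}, so the embedding cannot be cited there. Here I would argue directly from $R_\frF\in\cA_{\tau',m_{v_r}}$: using the pointwise bound $v_r(x)\leq m_{v_r}(x,y)\,v_r(y)$ and Minkowski's integral inequality (the Schur-type estimate for a kernel operator $L_1\to L_{\tau'}$, which uses only one of the two suprema defining $\cA_{\tau'}$) one obtains
\[
\norm{R_\frF(g)}{L_{\tau',v_r}}\leq \norm{R_\frF}{\cA_{\tau',m_{v_r}}}\,\norm{g}{L_{1,v_r}},
\]
so that again $R_\frF(L_{1,v_r})\subset L_{\tau',v_r}\subset L_{\tau',\frac{1}{v_r}}$ and \prettyref{proposition:co_properties}~(ii) applies. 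Since all of the heavy analysis is concentrated in \prettyref{theorem:frameinaq}, the remaining work is precisely this verification of parameter ranges together with the $L_1$-endpoint argument.
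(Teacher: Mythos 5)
Your proposal follows essentially the same route as the paper: the proof there consists of exactly one step, namely invoking \prettyref{remark:assumption_fulfilled} (i.e.\ \prettyref{theorem:frameinaq} combined with \prettyref{lemma:kernel_property}) to verify the hypothesis $R_\frF(L_{p,v_r})\subset L_{\tau',\frac{1}{v_r}}$ of \prettyref{proposition:co_properties}, whose part (ii) then gives completeness. Your verification of the Gelfand-triple prerequisites and of the inclusion $L_{\tau',v_r}\subset L_{\tau',\frac{1}{v_r}}$ via $v_r\geq 1$ is the same reasoning, just spelled out.

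The one point where you go beyond the paper is the endpoint $p=1$, which \prettyref{definition:shearletcoorbitspace} admits but which falls outside the range $1<p<r$ of \prettyref{lemma:kernel_property} and hence of \prettyref{remark:assumption_fulfilled}; the paper's proof cites the remark without addressing this. Your direct Schur-type bound $\normw{R_\frF(g)}_{L_{\tau',v_r}}\leq\normw{R_\frF}_{\cA_{\tau',m_{v_r}}}\normw{g}_{L_{1,v_r}}$, obtained from $v_r(x)\leq m_{v_r}(x,y)v_r(y)$ and Minkowski's integral inequality, is correct and closes this small gap, so your argument is in fact slightly more complete than the one printed in the paper.
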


\begin{proof}
As stated in \prettyref{remark:assumption_fulfilled}, \prettyref{theorem:frameinaq} and \prettyref{lemma:kernel_property} imply that the assumption in \prettyref{proposition:co_properties} is fulfilled. Hence, the assertion follows.
\end{proof}

The following results are straightforward.

\begin{lemma}\label{lemma:coorbit_embeddings}
Let $1< p<q<\infty$, $1<\tau\leq 2$ with $p,q<\tau'$ and $0\leq r<s$. Furthermore let $\frF$ and $\frG$ satisfiy the conditions in \prettyref{theorem:frameinaq} with $G(\frF,\frG)\in\cA_{1,m_{v_r}}$. Then,
\begin{thmlist}
\item $\mathcal{SC}^{r}_{\frF,\tau,p}\subset\mathcal{SC}^{r}_{\frF,\tau,q}$,
\item $\mathcal{SC}^{s}_{\frF,\tau,p}\subset\mathcal{SC}^{r}_{\frF,\tau,p}$,
\item $\mathcal{SC}^{r}_{\frF,\tau,p}=\mathcal{SC}^{r}_{\frG,\tau,p}$.
\end{thmlist}
\end{lemma}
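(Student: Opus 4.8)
The plan is to obtain all three inclusions as direct consequences of the abstract results of \prettyref{sec:coorbit_theory}, once we record that the standing hypothesis of that section is in force for our frame. Indeed, by \prettyref{theorem:frameinaq} the reproducing kernel satisfies $R_\frF\in\cA_{q,m_{v_r}}$ for every $q>1$, which is precisely the assumption invoked in \prettyref{remark:assumption_fulfilled} and throughout the subsection on dependency of the coorbit spaces on their parameters. Since \prettyref{theorem:frameinaq} holds for \emph{every} nonnegative weight parameter, the same membership holds with $v_r$ replaced by $v_s$; thus both $\mathrm{Co}_{\frF,\tau}(L_{p,v_r})$ and $\mathrm{Co}_{\frF,\tau}(L_{p,v_s})$ are well defined, and \prettyref{lemma:embeddings} and \prettyref{proposition:equality_frf} are applicable.

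For statement (i), I would simply translate $\mathcal{SC}^{r}_{\frF,\tau,p}=\mathrm{Co}_{\frF,\tau}(L_{p,v_r})$ and apply \prettyref{lemma:embeddings}~(ii) with the fixed weight $v=v_r$ and the integrability exponents $p<q$; the requirement $p,q<\tau'$ from the hypotheses guarantees that both coorbit spaces are defined, and the inclusion is immediate. For statement (ii) the only additional input is the monotonicity of the weight family: reading off the explicit definition \prettyref{eq:weight}, for $\alpha=\infty$ one has $v_r(\infty)=v_s(\infty)=1$, while for $\alpha\in[-1,1]^\ast$ we have $|\alpha|\leq 1$, so that $0<r<s$ forces $|\alpha|^{-r}\leq|\alpha|^{-s}$, i.e.\ $v_r\leq v_s$ pointwise on $X$. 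With this at hand, \prettyref{lemma:embeddings}~(iii) (taken with $v=v_r$ and $w=v_s$) yields $\mathrm{Co}_{\frF,\tau}(L_{p,v_s})\subset\mathrm{Co}_{\frF,\tau}(L_{p,v_r})$, which is exactly $\mathcal{SC}^{s}_{\frF,\tau,p}\subset\mathcal{SC}^{r}_{\frF,\tau,p}$.

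Statement (iii) is the frame-independence result: it follows verbatim from \prettyref{proposition:equality_frf} applied with the weight $v=v_r$, since the hypothesis $G(\frF,\frG)\in\cA_{1,m_{v_r}}$ assumed in the lemma is precisely the Gramian condition required there, and both $\frF$ and $\frG$ satisfy the necessary kernel conditions by \prettyref{theorem:frameinaq}. I do not expect any genuine analytic obstacle here; the entire content lies in the correct bookkeeping of parameters—matching the integrability exponents $p,q$ against the two roles played by the letter in \prettyref{lemma:embeddings}, verifying the pointwise weight monotonicity $v_r\leq v_s$, and checking that the abstract hypotheses (well-definedness for both $v_r$ and $v_s$, and the Gramian condition) are supplied by \prettyref{theorem:frameinaq} and by the assumptions of the lemma. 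The mildest point requiring care is that the inclusions in (ii) a priori take place in different distribution spaces $(\cH_{\tau,v_s})^\sim$ and $(\cH_{\tau,v_r})^\sim$; this is harmless because $v_r\leq v_s$ gives $\cH_{\tau,v_s}\subset\cH_{\tau,v_r}$ and hence the compatible dual embedding, so the set-theoretic inclusion of coorbit spaces is meaningful.
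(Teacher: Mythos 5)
Your proof is correct and follows essentially the same route as the paper: (i) via \prettyref{lemma:embeddings}~(ii), (ii) via the weight monotonicity $v_r\leq v_s$ and \prettyref{lemma:embeddings}~(iii), and (iii) via \prettyref{proposition:equality_frf}. In fact you are slightly more careful than the paper, which attributes part (ii) to \prettyref{lemma:embeddings}~(ii) (apparently a slip, since that part concerns the integrability exponent rather than the weight), and your explicit verification of $v_r\leq v_s$ and the remark about the compatibility of the ambient distribution spaces are welcome additions.
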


\begin{proof}
(i) and (ii) follow from \prettyref{lemma:embeddings} (ii), (iii) is a consequence of \prettyref{proposition:equality_frf}.
\end{proof}

Even though we introduced new integrability conditions on the kernel to obtain new spaces, these spaces are in fact one and the same, as the following proposition shows.

\begin{proposition}\label{proposition:coorbit_equality}
Let $1\leq p<\infty$, $1<\sigma,\tau\leq 2$ with $p<\sigma',\tau'$. Then, $\mathcal{SC}^{r}_{\frF,\tau,p}=\mathcal{SC}^{r}_{\frF,\sigma,p}$.
\end{proposition}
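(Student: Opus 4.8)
The plan is to reduce the claim to the reproducing-kernel characterisation of \prettyref{proposition:co_properties}, exploiting that the reproducing kernel subspace of $L_{p,v_r}$ does not depend on the integrability parameter. Without loss of generality assume $\sigma<\tau$ (the case $\sigma=\tau$ is trivial and the roles of $\sigma$ and $\tau$ are symmetric). First I would check that \prettyref{proposition:co_properties} is applicable for both parameters, i.e. that $R_\frF(L_{p,v_r})\subset L_{\sigma',\frac{1}{v_r}}$ and $R_\frF(L_{p,v_r})\subset L_{\tau',\frac{1}{v_r}}$. Since $R_\frF\in\cA_{q,m_{v_r}}$ for all $q>1$ by \prettyref{theorem:frameinaq}, this holds for $1<p<\sigma',\tau'$ by \prettyref{remark:assumption_fulfilled} (via \prettyref{lemma:kernel_property}); the boundary case $p=1$ follows directly from the generalised Young inequality, using that $R_\frF\in\cA_{\sigma',m_{v_r}}\cap\cA_{\tau',m_{v_r}}$. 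With this in place, one inclusion is already available: because $\sigma<\tau$, \prettyref{lemma:embeddings}~(i) yields $\mathcal{SC}^{r}_{\frF,\tau,p}=\mathrm{Co}_{\frF,\tau}(L_{p,v_r})\subset\mathrm{Co}_{\frF,\sigma}(L_{p,v_r})=\mathcal{SC}^{r}_{\frF,\sigma,p}$.

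The substantial direction is the reverse inclusion, and here lies the main obstacle: a priori $\mathcal{SC}^{r}_{\frF,\sigma,p}$ is realised inside the larger antidual $(\cH_{\sigma,v_r})^\sim$, while $\mathcal{SC}^{r}_{\frF,\tau,p}$ lives in the smaller space $(\cH_{\tau,v_r})^\sim\subset(\cH_{\sigma,v_r})^\sim$, so one must show that a given $f$ of the first type actually lies in the smaller distribution space (and carries the same norm). To do this I would take $f\in\mathcal{SC}^{r}_{\frF,\sigma,p}$, set $F:=V_{\frF,\sigma}f\in L_{p,v_r}$, and apply \prettyref{proposition:co_properties}~(i) with parameter $\sigma$ to obtain $R_\frF F=F$. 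Applying the same part with parameter $\tau$ then produces some $g\in\mathcal{SC}^{r}_{\frF,\tau,p}$ with $V_{\frF,\tau}g=F$. It remains to identify $f$ and $g$ as elements of $(\cH_{\sigma,v_r})^\sim$.

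The identification rests on the compatibility of the two voice-transform extensions. By \prettyref{lemma:nesting} we have $g\in(\cH_{\tau,v_r})^\sim\subset(\cH_{\sigma,v_r})^\sim$, and since $R_\frF\in\cA_{\sigma,m_{v_r}}\cap\cA_{\tau,m_{v_r}}$ the frame elements satisfy $\psi_x\in\cH_{\sigma,v_r}\subset\cH_{\tau,v_r}$ by \prettyref{lemma:frame_subset}. Both extended transforms are defined by the same formula $h\mapsto(x\mapsto h(\psi_x))$, so under the antidual embedding $V_{\frF,\sigma}g(x)=g(\psi_x)=V_{\frF,\tau}g(x)=F(x)=V_{\frF,\sigma}f(x)$ for every $x\in X$, i.e. $V_{\frF,\sigma}g=V_{\frF,\sigma}f$. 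The injectivity of $V_{\frF,\sigma}$ on $(\cH_{\sigma,v_r})^\sim$ from \prettyref{lemma:h_properties}~(i) then forces $f=g$, whence $f\in\mathcal{SC}^{r}_{\frF,\tau,p}$. The norms coincide automatically, since $\norm{f}{\mathcal{SC}^{r}_{\frF,\sigma,p}}=\norm{F}{L_{p,v_r}}=\norm{g}{\mathcal{SC}^{r}_{\frF,\tau,p}}=\norm{f}{\mathcal{SC}^{r}_{\frF,\tau,p}}$. Thus the two spaces agree as sets of distributions and isometrically as normed spaces; the only delicate point in the whole argument is the consistency step identifying the two distribution realisations of $f$, which is exactly where injectivity of the voice transform and the common test function $\psi_x$ are used.
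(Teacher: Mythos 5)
Your proof is correct, and its analytic engine is the same as the paper's: everything reduces to the reproducing identity together with \prettyref{lemma:kernel_property} (justified by \prettyref{theorem:frameinaq} and \prettyref{remark:assumption_fulfilled}), which places the voice transform $V_{\frF,\sigma}f\in L_{p,v_r}$ into $L_{\tau',\frac{1}{v_r}}$. The packaging, however, differs. The paper argues both directions symmetrically and in one line: from $R_\frF(V_{\frF,\sigma}f)\in L_{\tau',\frac{1}{v_r}}$ it concludes $f\in(\cH_{\tau,v_r})^\sim$ directly via \prettyref{lemma:equivalent_norm}. You instead split the claim into an easy inclusion (for $\sigma<\tau$, via \prettyref{lemma:embeddings}~(i)) and a hard one, and for the latter you route through \prettyref{proposition:co_properties}~(i) applied twice to manufacture a $g\in\mathcal{SC}^{r}_{\frF,\tau,p}$ with the same transform, identifying $f=g$ by the injectivity in \prettyref{lemma:h_properties}~(i) together with the compatibility of the extensions $V_{\frF,\sigma}$ and $V_{\frF,\tau}$ on $(\cH_{\tau,v_r})^\sim\subset(\cH_{\sigma,v_r})^\sim$. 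Your version is longer but makes explicit two points the paper elides: the consistency of the two distributional realisations of $f$ (exactly what is needed to read the asserted set equality literally, given that the two coorbit spaces a priori live in different antiduals), and the boundary case $p=1$, which is not covered by \prettyref{lemma:kernel_property} (that lemma assumes $p>1$) and which you correctly settle by a Minkowski/generalised-Young estimate using $R_\frF\in\cA_{\tau',m_{v_r}}$. No gaps.
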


\begin{proof}
Assume $f\in\mathcal{SC}^{r}_{\frF,\sigma,p}$, i.e. $f\in(\mathcal{H}_{\sigma,v_r})^\sim$ with $\mathcal{SH}_{\frF,\sigma}f\in L_{p,v_r}$, by the reproducing identity and \prettyref{lemma:kernel_property} it holds $\mathcal{SH}_{\frF,\sigma}f=R_\frF(\mathcal{SH}_{\frF,\sigma}f)\in R_\frF(L_{p,v_r})\subset L_{\tau',v_r}\subset L_{\tau',\frac{1}{v_r}}$. Thus, \prettyref{lemma:equivalent_norm} yields $f\in(\mathcal{H}_{\tau,v_r})^\sim$ and $f\in\mathcal{SC}^{r}_{\frF,\tau,p}$. Equivalently the converse is shown.
\end{proof}

\begin{remark}\label{remark:lastremark}
With \prettyref{proposition:coorbit_equality} at hand the coorbit spaces solely depend on $p$ and not on $\tau$. Thus it is justified to omit the parameter $\tau$ and simply write
\begin{equation*}
\mathcal{SC}^r_{\frF,p} = \{f\in(\cH_{\tau,v_r})^\sim:\mathcal{SH}_{\frF,\tau}f\in L_{p,v_r}(X,\mu)\}
\end{equation*}
for $1\leq p<\infty$ and some $\tau$ fulfilling $p<\tau'<\infty$.
\end{remark}

\appendix

\section{}

In this appendix we will briefly discuss Young's inequality, the three-way Young's inequality and Schur's test mentioned in \prettyref{sec:coorbit_theory}.

\begin{lemma}[Young's inequality]\label{lemma:youngs_inequality}
Let $a,b\geq 0$ and $p,q>0$ with $1/p+1/q=1$, then
\begin{align*}
ab \leq \frac{a^p}{p}+\frac{b^q}{q}.
\end{align*}
\end{lemma}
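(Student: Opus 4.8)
The plan is to reduce the inequality to the concavity of the logarithm. First I would dispose of the degenerate case: if $a = 0$ or $b = 0$, then $ab = 0$ while the right-hand side $\frac{a^p}{p} + \frac{b^q}{q}$ is a sum of nonnegative terms, so the inequality holds trivially. Hence for the remainder I may assume $a, b > 0$. I would also record that the hypotheses $p,q > 0$ together with $1/p + 1/q = 1$ force $p, q > 1$, so that both exponents are genuine conjugate exponents and the weights $1/p, 1/q$ form a convex combination summing to $1$.

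Next comes the key step: since the natural logarithm is concave on $(0,\infty)$, applying the two-point weighted concavity inequality (Jensen's inequality) with weights $1/p$ and $1/q$ to the positive numbers $a^p$ and $b^q$ yields
\begin{align*}
\log\!\left(\frac{1}{p}\,a^p + \frac{1}{q}\,b^q\right) \geq \frac{1}{p}\log(a^p) + \frac{1}{q}\log(b^q) = \log a + \log b = \log(ab).
\end{align*}
Because $\log$ is strictly increasing, exponentiating both sides preserves the inequality and gives exactly $ab \leq \frac{a^p}{p} + \frac{b^q}{q}$, as claimed.

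I expect no serious obstacle here; the argument is entirely routine once the concavity of $\log$ is invoked, and the only point requiring a moment's care is the degenerate boundary case $ab = 0$, handled separately above. An equivalent route, should one prefer to avoid citing concavity of $\log$ directly, is to set $a = e^{u/p}$ and $b = e^{v/q}$ and apply the convexity of the exponential to the convex combination $\frac{1}{p}u + \frac{1}{q}v$; a third option is the elementary calculus argument that fixes $b$ and minimizes the map $a \mapsto \frac{a^p}{p} + \frac{b^q}{q} - ab$ over $a \geq 0$, whose unique interior critical point $a = b^{q/p}$ satisfies $a^p = b^q = ab$ and hence realizes equality, while the positive second derivative confirms it is the global minimum. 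Any of these three routes produces the stated bound.
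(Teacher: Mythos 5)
Your proof is correct: the reduction to the case $a,b>0$, the observation that $1/p+1/q=1$ with $p,q>0$ forces $p,q>1$, and the application of the concavity of $\log$ to the convex combination $\tfrac1p a^p+\tfrac1q b^q$ are all sound, and exponentiating does yield the claimed bound. The paper itself states this classical lemma without proof (it is only used as an ingredient in the proof of the three-way Young inequality and Schur's test), so there is no in-paper argument to compare against; your concavity argument, as well as the two alternatives you sketch, are standard and perfectly adequate ways to fill that gap.
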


\begin{lemma}[Three-way Young's inequality]\label{lemma:threeway_youngs_inequality}
Let $a,b,c\geq 0$ and $p,q,r>0$ with $1/p+1/q+1/r=1$, then
\begin{align*}
abc \leq \frac{a^p}{p}+\frac{b^q}{q}+\frac{c^r}{r}.
\end{align*}
\end{lemma}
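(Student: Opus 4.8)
The plan is to reduce the three-way inequality to the ordinary two-variable Young's inequality of \prettyref{lemma:youngs_inequality}, which has just been established, by grouping the three factors into two and applying the two-way bound twice with suitably chosen conjugate exponents. First I would dispose of the degenerate cases: if any of $a,b,c$ equals zero, the left-hand side vanishes while the right-hand side is nonnegative, so the inequality holds trivially. Hence I may assume $a,b,c>0$.

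Next I would introduce an auxiliary exponent $s>0$ defined by $1/s:=1/q+1/r$. Since $1/p+1/q+1/r=1$, this forces $1/p+1/s=1$, so that $p$ and $s$ are H\"older-conjugate. Applying \prettyref{lemma:youngs_inequality} to the pair $a$ and $bc$ then yields
\[
abc \leq \frac{a^p}{p}+\frac{(bc)^s}{s}.
\]
It remains to estimate $(bc)^s=b^s c^s$. The defining relation $1/s=1/q+1/r$ is equivalent to $s/q+s/r=1$, so the exponents $q/s$ and $r/s$ are themselves conjugate. A second application of \prettyref{lemma:youngs_inequality}, now to $b^s$ and $c^s$ with these exponents, gives
\[
b^s c^s \leq \frac{(b^s)^{q/s}}{q/s}+\frac{(c^s)^{r/s}}{r/s}=\frac{s}{q}b^q+\frac{s}{r}c^r.
\]
Substituting this into the previous display and cancelling the factor $s$ produces exactly $abc\leq a^p/p+b^q/q+c^r/r$, as claimed.

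I do not expect a genuine obstacle here; the statement is an elementary generalization, and the only point requiring care is the bookkeeping of the conjugate exponents, namely verifying that $q/s$ and $r/s$ are indeed conjugate and that all exponents ($p$, $s$, $q/s$, $r/s$) remain positive, which follows at once from $p,q,r>0$ together with $1/p+1/q+1/r=1$. An alternative and even shorter route would invoke concavity of the logarithm directly: writing $abc$ as $\exp\bigl(\tfrac1p\log a^p+\tfrac1q\log b^q+\tfrac1r\log c^r\bigr)$ and applying Jensen's inequality to the convex combination with weights $1/p,1/q,1/r$ gives the bound immediately. I would nonetheless favour the iterative argument, since it stays within the two-variable inequality already proved in \prettyref{lemma:youngs_inequality} and keeps the appendix self-contained.
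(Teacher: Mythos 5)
Your proof is correct and follows essentially the same route as the paper's: both define the auxiliary exponent (your $s$ is the paper's $p'$ with $1/p'=1/q+1/r$), apply the two-way Young inequality first to $a$ and $bc$ and then to $b^s$ and $c^s$ with the conjugate pair $q/s$, $r/s$. The extra remarks on degenerate cases and the Jensen alternative are fine but not needed.
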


\begin{proof}
By applying Young's inequality twice and observing $\frac{p'}{q}+\frac{p'}{r}=1$ with $\frac{1}{p}+\frac{1}{p'}=1$ we obtain
\begin{align*}
abc \leq \frac{a^p}{p} + \frac{b^{p'}c^{p'}}{p'} \leq \frac{a^p}{p} + \frac{1}{p'}\left(\frac{(b^{p'})^{q/p'}}{q/p'} + \frac{(c^{p'})^{r/p'}}{r/p'}\right) = \frac{a^p}{p}+\frac{b^q}{q}+\frac{c^r}{r},
\end{align*}
which proves the claim.
\end{proof}

\begin{lemma}[Schur's test]\label{lemma:schurs_test}
For a kernel $K:X\times X\to\C$ with $K\in\cA_{1,m_v}$ the corresponding kernel operator fulfills \[\norm{K}{L_{p,v}\to L_{p,v}}\leq\norm{K}{\cA_{1,m_v}}\] for all $1\leq p\leq\infty$.
\end{lemma}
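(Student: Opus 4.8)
The plan is to treat the range $1<p<\infty$ by duality and to dispatch the endpoints $p=1$ and $p=\infty$ by a direct computation. This is the specialization of \prettyref{lemma:kernel_property} to $p=r$ announced in \prettyref{remark:kernel_property}, but I would keep the argument self-contained so that it also covers the two endpoints, which \prettyref{lemma:kernel_property} does not. For $1<p<\infty$, fixing $f\in L_{p,v}$ and using the duality $(L_{p,v})^\ast=L_{p',\frac{1}{v}}$ I would write
\[
\norm{K(f)}{L_{p,v}}=\sup_{\norm{h}{L_{p',\frac{1}{v}}}\leq 1}\abs{\ip{K(f)}{h}}\leq\sup_{\norm{h}{L_{p',\frac{1}{v}}}\leq 1}\int_X\int_X\abs{K(x,y)}\abs{f(y)}\abs{h(x)}\,\mathrm{d}\mu(y)\,\mathrm{d}\mu(x),
\]
reducing everything to an estimate of the double integral on the right.

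The key step is to factorize the kernel as
\[
\abs{K(x,y)}=\bigl(\abs{K(x,y)}m_v(x,y)\bigr)^{1/p}\bigl(\abs{K(x,y)}m_v(x,y)\bigr)^{1/p'}m_v(x,y)^{-1}
\]
and to redistribute the weights so that
\[
\abs{K(x,y)}\abs{f(y)}\abs{h(x)}=\Bigl(\abs{K(x,y)m_v(x,y)}^{1/p}\abs{f(y)v(y)}\Bigr)\Bigl(\abs{K(x,y)m_v(x,y)}^{1/p'}\abs{h(x)v(x)^{-1}}\Bigr)C(x,y),
\]
where $C(x,y)=\frac{v(x)}{v(y)}m_v(x,y)^{-1}$. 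The crucial observation is that $C(x,y)\leq 1$, because $\frac{v(x)}{v(y)}$ is one of the two quantities in the maximum defining $m_v(x,y)$; this is the only place where the structure of $m_v$ is used. Dropping $C$ and applying Young's inequality (\prettyref{lemma:youngs_inequality}) in the form $ab\leq a^p/p+b^{p'}/p'$ to the two bracketed factors, I would then integrate in $x$ and $y$ and use Tonelli to factor the two resulting double integrals, bounding the inner integral of $\abs{K m_v}$ by $\norm{K}{\cA_{1,m_v}}$ in each variable separately. This gives
\[
\int_X\int_X\abs{K}\abs{f}\abs{h}\,\mathrm{d}\mu\,\mathrm{d}\mu\leq\tfrac1p\norm{K}{\cA_{1,m_v}}\norm{f}{L_{p,v}}^p+\tfrac1{p'}\norm{K}{\cA_{1,m_v}}\norm{h}{L_{p',\frac{1}{v}}}^{p'},
\]
and normalizing to $\norm{f}{L_{p,v}}=\norm{h}{L_{p',\frac{1}{v}}}=1$ yields $\norm{K}{L_{p,v}\to L_{p,v}}\leq\norm{K}{\cA_{1,m_v}}$ after taking the supremum over $h$ and using homogeneity.

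For the endpoints I would argue directly. For $p=\infty$, from $v(x)/v(y)\leq m_v(x,y)$ one obtains $v(x)\abs{K(f)(x)}\leq\int_X\abs{K(x,y)m_v(x,y)}\abs{f(y)v(y)}\,\mathrm{d}\mu(y)\leq\norm{f}{L_{\infty,v}}\norm{K}{\cA_{1,m_v}}$ uniformly in $x$, so the essential supremum over $x$ gives the bound. For $p=1$, Tonelli together with $v(x)\leq m_v(x,y)v(y)$ gives $\norm{K(f)}{L_{1,v}}\leq\int_X\abs{f(y)v(y)}\bigl(\int_X\abs{K(x,y)m_v(x,y)}\,\mathrm{d}\mu(x)\bigr)\mathrm{d}\mu(y)\leq\norm{K}{\cA_{1,m_v}}\norm{f}{L_{1,v}}$. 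The main obstacle is not any single computation but arranging the factorization in the middle step so that the correction factor $C(x,y)$ is bounded by $1$; once this is secured, Young's inequality and Tonelli finish the argument, and the same observation explains why the single hypothesis $K\in\cA_{1,m_v}$ suffices here, in contrast to the stronger requirement $K\in\cA_{q,m_v}$ for all $q>1$ in \prettyref{lemma:kernel_property}.
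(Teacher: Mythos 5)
Your proposal is correct and follows essentially the same route as the paper's proof: duality against $L_{p',\frac{1}{v}}$, redistribution of the weights using $v(x)/v(y)\leq m_v(x,y)$, Young's inequality, and the $\cA_{1,m_v}$ bound on the inner integrals. The only (harmless) refinements are that you make the weight redistribution explicit via the factor $C(x,y)\leq 1$ and treat $p=1$ by a direct Tonelli argument rather than through the duality computation, which is slightly cleaner at that endpoint.
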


\begin{proof}
For $p<\infty$ assume $f\in L_{p,v}$ with $\norm{f}{L_{p,v}}\leq 1$, then
\begin{align*}
\norm{K(f)}{L_{p,v}} &= \sup_{\substack{g\in L_{p',\frac{1}{v}} \\ \norm{g}{L_{p',\frac{1}{v}}}\leq 1}}\ip{K(f)}{g} \\
&\leq \sup_{\substack{g\in L_{p',\frac{1}{v}} \\ \norm{g}{L_{p',\frac{1}{v}}}\leq 1}}\int_X\int_X\abs{K(x,y)f(y)g(x)}\,\mathrm{d}\mu(x)\,\mathrm{d}\mu(y),
\end{align*}
where $p'$ denotes the H\"older-dual of $p$. By Young's inequality we obtain
\begin{align*}
&\int_X\int_X\abs{K(x,y)f(y)g(x)}\,\mathrm{d}\mu(x)\,\mathrm{d}\mu(y) \\
&\hspace{2cm} \leq \frac{1}{p}\int_X\int_X \abs{K(x,y)}m_v(x,y)\cdot\abs{f(y)}^p v(y)^p\,\mathrm{d}\mu(x)\,\mathrm{d}\mu(y) \\
&\hspace{3cm}+ \frac{1}{p'}\int_X\int_X \abs{K(x,y)}m_v(x,y)\cdot\abs{g(x)}^{p'}\frac{1}{v(x)^{p'}}\,\mathrm{d}\mu(x)\,\mathrm{d}\mu(y) \\
&\hspace{2cm} \leq \frac{1}{p}\norm{K}{\cA_{1,m_v}}\cdot\norm{f}{L_{p,v}}^p+\frac{1}{p'}\norm{K}{\cA_{1,m_v}}\cdot\norm{g}{L_{p',\frac{1}{v}}}^{p'}.
\end{align*}
Thus, $\norm{K(f)}{L_{p,v}\to L_{p,v}}\leq\norm{K}{\cA_{1,m_v}}$.
\par 
On the other hand for $p=\infty$ and $f\in L_{\infty,v}$ we have
\begin{align*}
\norm{K(f)}{L_{\infty,v}} &\leq \esssup_{x\in X}\int_X\abs{K(x,y)}m_v(x,y)\cdot\abs{f(y)}v(y)\,\mathrm{d}\mu(y)\\
&\leq \norm{K}{\cA_{1,m_v}}\cdot\norm{f}{L_{\infty,v}},
\end{align*}
which concludes the proof.
\end{proof}

\bibliographystyle{plain}
\bibliography{literature}

\end{document}